\def\BigColSep{\setlength{\arraycolsep}{9.5pt}}
\newcommand{\G}{\mathsf G}
\newcommand{\HH}{\mathds H}
\newcommand{\CC}{\mathds C}
\newcommand{\ZZ}{\mathds Z}
\newcommand{\RR}{\mathds R}
\newcommand{\dd}{\mathrm d} 
\newcommand{\g}{\mathrm g} 
\newcommand{\SO}{\mathsf{SO}}
\newcommand{\conv}{\operatorname{conv}}
\newtheorem{theorem}{Theorem}
\newtheorem{lemma}[theorem]{Lemma}
\newtheorem{proposition}[theorem]{Proposition}
\newtheorem{mainthm}{\sc Theorem}
\newtheorem{maincor}[mainthm]{\sc Corollary}
\newtheorem*{fincon}{\sc Finiteness Conjecture}
\theoremstyle{definition}
\theoremstyle{remark}
\newtheorem{example}[theorem]{Example}
\newtheorem{remark}[theorem]{Remark}
\newtheorem{question}[theorem]{Question}
\numberwithin{equation}{section}
\numberwithin{theorem}{section}
\newcommand{\drawgrid}{
  \draw[black] (0,0) rectangle ({\gridwidth*\gridcellsize},{\gridheight*\gridcellsize});
  \foreach \x in {1,...,\numexpr\gridwidth-1} {
    \draw[black] ({\x*\gridcellsize},0) -- ({\x*\gridcellsize},{\gridheight*\gridcellsize});
  }
  \foreach \y in {1,...,\numexpr\gridheight-1} {
    \draw[black] (0,{\y*\gridcellsize}) -- ({\gridwidth*\gridcellsize},{\y*\gridcellsize});
  }
  \node at (0,0) [circle,fill=red,inner sep=1pt]{};
  \node at (\gridwidth,\gridheight) [circle,fill=red,inner sep=1pt]{};
}
\newcommand{\gridcellsize}{1} 
\newcommand{\gridwidth}{2}    
\newcommand{\gridheight}{2}   
\title{Counting Homogeneous Einstein metrics}
\subjclass{13P15, 14M25, 53C25, 53C30, 52B20, 62R01, 65H14}
\author[R. G. Bettiol]{Renato G. Bettiol}
\address{\!\!\!\begin{tabular}{lll}
CUNY Lehman College & & CUNY Graduate Center \\
Department of Mathematics & & Department of Mathematics \\
250 Bedford Park~Blvd W & & 365 Fifth Avenue \\
Bronx, NY, 10468, USA & & New York, NY, 10016, USA
\end{tabular}
}
\email{r.bettiol@lehman.cuny.edu}
\author[H. Friedman]{Hannah Friedman}
\address{University of California, Berkeley \newline
\indent Department of Mathematics \newline
\indent 970 Evans Hall \newline
\indent Berkeley, CA, 94720, USA}
\email{hannahfriedman@berkeley.edu}
\begin{document}

\date{\today}

\begin{abstract}
We present an explicit upper bound on the number of isolated homogeneous Einstein metrics on compact homogeneous spaces whose isotropy representations consist of pairwise inequivalent irreducibles.
This is the BKK bound of the corresponding system of Laurent polynomials and is found combinatorially by computing the volume of a polytope. 
Inspired by a connection with algebraic statistics, we describe this system's BKK discriminant in terms of the principal $A$-determinant of scalar curvature. As a consequence, we confirm the Finiteness Conjecture of B\"ohm--Wang--Ziller in special cases. In particular, we give a unified proof that it holds on all generalized Wallach spaces. Finally, using numerical algebraic geometry, we compute $\sf G$-invariant Einstein metrics on low-dimensional full flag manifolds $\sf G/ T$, where $\G$ is a compact simple Lie group and $\sf T$ is a maximal torus.
\end{abstract}

\maketitle

\section{Introduction}

The problem of finding homogeneous Einstein metrics on a compact homogeneous space is, in essence, an algebraic problem, but one of significant geometric interest~\cite{jablonski-survey,wang-survey}.
In this~paper, we advance the understanding of this problem using ideas from algebraic geometry and combinatorics.
This paper is written for a broad audience, including researchers in differential geometry, enumerative combinatorics, and metric algebraic geometry~\cite{BKS}, so, for the readers' convenience, we recall basic notions from these fields.

A Riemannian metric $\g$ on a manifold $M$ is \emph{Einstein} if its Ricci curvature satisfies
\begin{equation}\label{eq:einstein}
  \operatorname{Ric}_{\g} = \lambda \, \g
\end{equation}
for some constant $\lambda\in \RR$, called its \emph{Einstein constant}.
Constructing Einstein metrics is a difficult problem, and a central question in geometric analysis; see \cite{besse} for a comprehensive introduction.
The Einstein equation \eqref{eq:einstein} is a second-order nonlinear PDE on $M$, but, under symmetry assumptions, it can be reduced to an algebraic equation.
Namely, if a Lie group $\G$ acts transitively on $M$, then a $\G$-invariant metric $\g$ and its Ricci curvature $\operatorname{Ric}_{\g}$ are uniquely determined by their value at any point $p_0\in M$.
In this situation, \eqref{eq:einstein} reduces to a system of Laurent polynomial equations in the entries of $\g_{p_0}$. Positive-definite solutions to this system are in bijective correspondence with $\G$-invariant Einstein metrics on the homogeneous space $M=\G/\mathsf H$, where $\mathsf H$ is the stabilizer of $p_0$. These  are called \emph{homogeneous Einstein metrics}.

The sign of $\lambda$ determines an important trichotomy for homogeneous Einstein manifolds $(M,\g)$. If $\lambda<0$, then $(M,\g)$ is isometric to an Einstein solvmanifold and diffeomorphic to Euclidean space, by the recent proof of the Alekseevskii conjecture~\cite{boehm-lafuente}. If $\lambda=0$, then $(M,\g)$ is flat, and hence isometric to the product of a torus and a Euclidean space; see~\cite[Thm.~7.61]{besse}. If $\lambda>0$, then $(M,\g)$ is compact with finite fundamental group and $\G$ can be assumed compact and semisimple; see \cite[\S 1-2]{jablonski-survey} and \cite[\S 1-3]{wang-survey}. In this paper, we only work with the latter case, and, up to homotheties, we henceforth fix $\lambda=1$.

There are two different but intertwined approaches to studying homogeneous Einstein metrics on compact homogeneous spaces: one is variational, the other is algebraic. First, the variational approach is built on Hilbert’s characterization of Einstein metrics
as critical points of the total scalar curvature functional on unit-volume metrics.
Since the space of $\G$-invariant unit-volume metrics on $\sf G/H$ is finite-dimensional, 
one may study this problem via classical critical point theory, e.g., Morse theory, applied to the scalar curvature function. This has been a fruitful perspective, with foundational contributions by Jensen~\cite{jensen}, Wang--Ziller~\cite{wang-ziller-inv}, and B\"ohm--Wang--Ziller~\cite{bwz}.
Second, the algebraic approach is to directly analyze the corresponding system of Laurent polynomials, which are the Euler--Lagrange equations of the aforementioned variational problem. 
This approach and its interplay with representation theory were used by Wang--Ziller~\cite{wang-ziller-inv} to produce examples of compact simply-connected homogeneous spaces $\G/\mathsf H$ that admit no $\G$-invariant Einstein metrics 
and to classify normal homogeneous Einstein metrics when $\G$ is simple \cite{wang-ziller-ens}. Most  subsequent progress focused on special cases, with notable works by Graev~\cite{graev1,graev2,graev3}, using some of the same tools employed in this paper, and several other papers applying Gr\"obner basis techniques to compute solutions; see Arvanitoyeorgos~\cite{arva-survey} for a survey. 

To write \eqref{eq:einstein} on a compact homogeneous space $\sf G/H$ as a system of Laurent polynomial equations, let $Q$ be a bi-invariant metric on $\G$ and $\mathfrak m=\mathfrak m_1 \oplus\dots\oplus\mathfrak m_\ell$ be a decomposition of the $Q$-orthogonal complement of $\mathfrak h\subset\mathfrak g$ into irreducible $\sf H$-representations. Suppose   the $\mathfrak m_i$ are pairwise inequivalent, so every $\G$-invariant homogeneous metric $\g$ on $\sf G/H$ is \emph{diagonal}, i.e., given by $x_1\,Q|_{\mathfrak m_1}+\dots +x_\ell\,Q|_{\mathfrak m_\ell}$ 
for some ${\bf x}=(x_i)\in\RR^\ell_+$. 
Then $\operatorname{Ric}_\g=\g$ 
if and only if
\begin{equation}\label{eq:system}
\phantom{ \quad \quad  1 \leq i \leq \ell }
  f^\ell_i({\bf x})  \coloneqq \frac{b_i}{2x_i} - \frac{1}{4 d_i} \sum_{j,k = 1}^\ell L_{ijk} \frac{2x_k^2 - x_i^2}{x_ix_jx_k} - 1 = 0, \quad \quad  1 \leq i \leq \ell,
\end{equation}
where $d_i=\dim \mathfrak m_i$, the constants $b_i$ depend on the Cartan--Killing form of $\mathfrak g$, and $L=(L_{ijk})$ is a symmetric tensor of structure constants; see Section~\ref{sec:diffgeom} for details.
Note that \eqref{eq:system} is a system of 
$\ell$ Laurent polynomials in $\ell$ variables, with a total of $2\ell + \binom{\ell + 2}{3}$ nonnegative parameters which we label ${\bf b} = (b_i)$, ${\bf d} = (d_i)$, and $L = (L_{ijk})$.
Our first main result is:

\begin{mainthm}\label{thm:delannoy}
  For a fixed $\ell$ and any parameters ${\bf b}, {\bf d}$, and $L$, the number of isolated solutions to \eqref{eq:system} in $(\mathds C^*)^\ell$, counted with multiplicity, is bounded above by the central Delannoy number
  \begin{equation*}
    D_{\ell-1} = \sum\limits_{k=0}^{\ell-1} 2^k \, \binom{\ell-1}{k}^2.
  \end{equation*}
Thus, on a compact homogeneous space $\G/\mathsf H$ whose isotropy representation consists of $\ell$ pairwise inequivalent irreducible summands, there are at most $D_{\ell-1}$ isolated $\G$-invariant Einstein metrics with $\lambda=1$.
\end{mainthm}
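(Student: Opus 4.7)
The plan is to apply the Bernstein--Kushnirenko--Khovanskii (BKK) theorem, which bounds the number of isolated solutions of the system~\eqref{eq:system} in $(\CC^*)^\ell$, counted with multiplicity, by the normalized mixed volume $\operatorname{MV}(N_1, \ldots, N_\ell)$ of the Newton polytopes $N_i$ of $f^\ell_i$. The geometric consequence about $\G$-invariant Einstein metrics then follows immediately, since positive real solutions of~\eqref{eq:system} are in particular isolated solutions in $(\CC^*)^\ell$.

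Rather than computing $\operatorname{MV}(N_1,\ldots,N_\ell)$ directly, I will bound it by the mixed volume of a single enclosing polytope. Reading off the support of $f^\ell_i$ from~\eqref{eq:system}, every exponent vector that appears has one of the forms $\vec{0}$, $-e_a$, $e_a - 2 e_b$, or $e_a - e_b - e_c$ (for distinct indices in $\{1,\ldots,\ell\}$). A direct computation---for instance, $e_a - e_b - e_c = \tfrac12(e_a - 2e_b) + \tfrac12(e_a - 2e_c)$---exhibits each such vector as a convex combination of $\vec{0}$ and the set $\{e_a - 2e_b : 1 \le a \ne b \le \ell\}$. Hence $N_i \subseteq P$ for every $i$, where
\[
P \coloneqq \conv\bigl(\{\vec{0}\} \cup \{e_a - 2 e_b : 1 \le a \ne b \le \ell\}\bigr),
\]
and by monotonicity of mixed volume, $\operatorname{MV}(N_1,\ldots,N_\ell) \le \operatorname{MV}(P,\ldots,P) = \ell!\,\operatorname{Vol}(P)$.

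The combinatorial heart of the proof is showing $\ell!\operatorname{Vol}(P) = D_{\ell-1}$. The polytope $P$ is a pyramid with apex $\vec{0}$ over the base $B = \conv\{e_a - 2 e_b : a \ne b\}$, which lies in the affine hyperplane $\{x : \sum_i x_i = -1\}$. Projecting onto the first $\ell - 1$ coordinates identifies $B$ with the Minkowski sum $\Delta + (-2\Delta)$, where $\Delta = \conv\{\vec{0}, e_1, \ldots, e_{\ell-1}\}$ is the standard simplex in $\RR^{\ell-1}$. The pyramid parametrization $(s, u) \mapsto s \cdot (u, -1 - \sum u_i)$ has Jacobian $s^{\ell-1}$, which yields $\ell!\operatorname{Vol}(P) = (\ell-1)!\operatorname{Vol}(\Delta + (-2\Delta))$. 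Expanding this via the multilinearity of mixed volumes and invoking the classical identity $\operatorname{MV}(\Delta^{n-k}, (-\Delta)^k) = \binom{n}{k}$---a consequence of Bernstein's theorem applied to a generic system of $n - k$ linear and $k$ reciprocal-linear equations---collapses the sum:
\[
\ell!\operatorname{Vol}(P) = \sum_{k=0}^{\ell-1} \binom{\ell-1}{k}\, 2^k\, \binom{\ell-1}{k} = \sum_{k=0}^{\ell-1} 2^k \binom{\ell-1}{k}^2 = D_{\ell-1}.
\]

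The hardest step will be verifying the containment $N_i \subseteq P$: while the claim is natural given the structure of the Ricci tensor, pinning it down rigorously demands careful bookkeeping through the various exponent classes produced by the double sum in~\eqref{eq:system}. The mixed-volume identity $\operatorname{MV}(\Delta^{n-k},(-\Delta)^k) = \binom{n}{k}$ is classical, but deserves to be stated as a self-contained lemma, since its proof via a parametrization of the affine subspace cut out by the linear equations is not entirely routine.
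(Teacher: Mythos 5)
Your proof is correct, but it takes a genuinely different route from the paper's in both of its key steps, so let me compare. First, to pass from the mixed volume $\operatorname{MV}(N_1,\dots,N_\ell)$ to the volume of the single polytope $P=\conv\bigl(\{\mathbf 0\}\cup\{{\bf e}_a-2{\bf e}_b\}\bigr)$, you verify the containments $N_i\subseteq P$ (your convex combinations, e.g.\ ${\bf e}_a-{\bf e}_b-{\bf e}_c=\tfrac12({\bf e}_a-2{\bf e}_b)+\tfrac12({\bf e}_a-2{\bf e}_c)$ and $-{\bf e}_a=\tfrac23({\bf e}_b-2{\bf e}_a)+\tfrac13({\bf e}_a-2{\bf e}_b)$, are right) and then invoke monotonicity of mixed volume to get the \emph{inequality} $\operatorname{MV}(N_1,\dots,N_\ell)\le \ell!\operatorname{Vol}(P)$. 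The paper instead proves the \emph{equality} $\operatorname{MV}=\ell!\operatorname{Vol}(\widetilde P^\ell)$ via the Bihan--Soprunov criterion (Theorem~\ref{thm:mv-to-vol}), which requires the face analysis of Lemmas~\ref{lem:Pfaces} and~\ref{lem:Pifaces}. Your route is lighter and suffices for Theorem~\ref{thm:delannoy}, but it forfeits the exactness statement (Theorem~\ref{thm:mv-vol}) that the authors reuse to prove Theorem~\ref{thm:generic-finiteness}. Second, for the volume computation, the paper applies Postnikov's permutohedron formula with lattice paths and descent counts, whereas you identify the base (after the projection, whose Jacobian computation is correct and matches Lemma~\ref{lem:nvol}) with the Minkowski sum $\Delta+2(-\Delta)$ and expand by multilinearity using $\operatorname{MV}(\Delta^{\,n-k},(-\Delta)^k)=\binom{n}{k}$. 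That identity is classical (it is the degree of the reciprocal of a generic $k$-dimensional linear space, or a special mixed Eulerian number) but, as you note, it must be proved or cited as a lemma; with that in place your derivation of $D_{\ell-1}=\sum_k 2^k\binom{\ell-1}{k}^2$ is complete. One small point of care: you should also check that the diagonal points $-{\bf e}_a=\tfrac23({\bf e}_b-2{\bf e}_a)+\tfrac13({\bf e}_a-2{\bf e}_b)$ are redundant so that the projected base really equals the full Minkowski sum $\Delta+2(-\Delta)$ and not a proper subset; this works out, but it is the one spot where your identification could silently fail if unexamined.
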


For any integer $\ell \ge 2$, the central Delannoy number $D_{\ell-1}$ counts how many polygonal paths join the opposite corners $(0,0)$ and $(\ell-1,\ell-1)$ of a square grid using only \emph{right}, \emph{up}, and \emph{diagonal} steps.
For example, for $\ell=3$, there are $D_2=13$ such paths on a $2\times 2$ grid:
\begin{align*}
\begin{tikzpicture}[scale=0.5]
  \drawgrid
  \draw[red, very thick]
    (0,0) -- ++(1,0) -- ++(1,0) -- ++(0,1) -- ++(0,1);
\end{tikzpicture}
&&
\begin{tikzpicture}[scale=0.5]
  \drawgrid
  \draw[red, very thick]
    (0,0) -- ++(1,0) -- ++(0,1) -- ++(1,0) -- ++(0,1);
\end{tikzpicture}
&&
\begin{tikzpicture}[scale=0.5]
  \drawgrid
  \draw[red, very thick]
    (0,0) -- ++(1,0) -- ++(0,1) -- ++(0,1) -- ++(1,0);
\end{tikzpicture}
&&
\begin{tikzpicture}[scale=0.5]
  \drawgrid
  \draw[red, very thick]
    (0,0) -- ++(0,1) -- ++(1,0) -- ++(1,0) -- ++(0,1);
\end{tikzpicture}
&&
\begin{tikzpicture}[scale=0.5]
  \drawgrid
  \draw[red, very thick]
    (0,0) -- ++(0,1) -- ++(1,0) -- ++(0,1) -- ++(1,0);
\end{tikzpicture}
&&
\begin{tikzpicture}[scale=0.5]
  \drawgrid
  \draw[red, very thick]
    (0,0) -- ++(0,1) -- ++(0,1) -- ++(1,0) -- ++(1,0);
\end{tikzpicture}
&&
\\
\begin{tikzpicture}[scale=0.5]
  \drawgrid
  \draw[red, very thick]
    (0,0) -- ++(1,0) -- ++(1,1) -- ++(0,1);
\end{tikzpicture}
&&
\begin{tikzpicture}[scale=0.5]
  \drawgrid
  \draw[red, very thick]
    (0,0) -- ++(1,0) -- ++(0,1) -- ++(1,1);
\end{tikzpicture}
&&
\begin{tikzpicture}[scale=0.5]
  \drawgrid
  \draw[red, very thick]
    (0,0) -- ++(1,1) -- ++(1,1);
\end{tikzpicture}
&&
\begin{tikzpicture}[scale=0.5]
  \drawgrid
  \draw[red, very thick]
    (0,0) -- ++(1,1) -- ++(1,0) -- ++(0,1);
\end{tikzpicture}
&&
\begin{tikzpicture}[scale=0.5]
  \drawgrid
  \draw[red, very thick]
    (0,0) -- ++(1,1) -- ++(0,1) -- ++(1,0);
\end{tikzpicture}
&&
\begin{tikzpicture}[scale=0.5]
  \drawgrid
  \draw[red, very thick]
    (0,0) -- ++(0,1) -- ++(1,0) -- ++(1,1);
\end{tikzpicture}
&&
\begin{tikzpicture}[scale=0.5]
  \drawgrid
  \draw[red, very thick]
    (0,0) -- ++(0,1) -- ++(1,1) -- ++(1,0);
\end{tikzpicture}
\end{align*}
The first few values in the sequence of central Delannoy numbers (see \cite{oeis}) are the following:
\begin{align*}
&D_1=3,& \; &D_2= 13,& \; &D_3= 63,& \; &D_4= 321,& \; &D_5= 1\,683,& \\
&D_6= 8\,989,& \; &D_7= 48\,639,& \; &D_8= 265\,729,& \; &D_9= 1\,462\,563,& \; &D_{10} = 8\,097\,453,& \dots  
\end{align*}

The bound in Theorem~\ref{thm:delannoy} is the so-called \emph{Bernstein-Khovanskii-Kushnirenko (BKK)~bound} for the system \eqref{eq:system}, given by \emph{Bernstein's Theorem} (see \Cref{thm:bkk}), which states that the mixed volume of the Newton polytopes of such a system bounds the number of isolated complex solutions. 
One expects that the number of $\G$-invariant Einstein metrics on a homogeneous space $\sf G/H$ as in Theorem~\ref{thm:delannoy} is far smaller than $D_{\ell-1}$, since Theorem~\ref{thm:delannoy} bounds the number of complex solutions, not \emph{real, positive} solutions. Moreover, distinct real, positive solutions may correspond to isometric $\G$-invariant Einstein metrics; see Section~\ref{subsec:gaugegroup}. Bernstein's Theorem was previously used to estimate the number of homogeneous Einstein metrics on certain classes of homogeneous spaces by Graev~\cite{graev1,graev2,graev3}.

Remarkably, the homogeneous Einstein equations \eqref{eq:system} can be reinterpreted in the context of algebraic statistics~\cite{sullivant}. Namely, they are the critical equations of a maximum likelihood estimation problem on a scaled toric variety; see Theorem~\ref{thm:rewriteMLE}. We leverage previous work on the likelihood geometry of toric varieties \cite{ABB+} to prove our second main result (Theorem~\ref{thm:generic-finiteness}). In light of this, we believe that it would be fruitful to investigate further connections between algebraic statistics and geometric analysis on homogeneous spaces.

Fixing the monomials that appear in a given parametrized system of Laurent polynomial equations, 
there is a Zariski-dense subset of the space of coefficients for which the corresponding systems achieve the BKK bound. Coefficients that lie in this open set are called \emph{generic}, and the systems with those coefficients are said to be \emph{BKK generic}.
Since the parameters $L_{ijk}$ are symmetric in $i,j,k$, the coefficients in system \eqref{eq:system} are \emph{not} generic even for generic parameters ${\bf b}$, $\bf d$, and $L$.  
Thus, a priori, it is unclear if the bound in Theorem~\ref{thm:delannoy} is ever attained. Our second main result proves that it is attained, for generic parameters:

\begin{mainthm}\label{thm:generic-finiteness}
  If all parameters ${\bf b}$, ${\bf d}$, and $L$ are generic, then \eqref{eq:system} has exactly $D_{\ell - 1}$ solutions in $(\mathds C^*)^\ell$, counted with multiplicity.
  In particular, all solutions are isolated. 
\end{mainthm}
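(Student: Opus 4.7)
The plan is to exploit the reformulation of \eqref{eq:system} as the critical equations of a maximum likelihood estimation problem on a scaled toric variety, as promised in Theorem~\ref{thm:rewriteMLE}. This recasts counting Einstein metrics as counting complex critical points of a likelihood function, an object whose generic behavior is governed by the maximum likelihood (ML) degree of the underlying variety.

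The first step is to invoke the theory of likelihood geometry for scaled toric varieties developed in \cite{ABB+}: the ML degree is a topological invariant, independent of the data, and for generic data it equals the number of isolated complex critical points of the associated likelihood function. Since the ML degree is always at most the BKK bound of the critical equations, and the latter equals $D_{\ell-1}$ by Theorem~\ref{thm:delannoy}, the proof reduces to two claims: (i) the ML degree of this toric variety equals $D_{\ell-1}$, and (ii) generic parameters $({\bf b}, {\bf d}, L)$ yield MLE data lying in the open stratum where this ML degree is attained. For (i), I would compute the ML degree using the formulas in \cite{ABB+}, which relate it to the same polytopal data whose mixed volume gives $D_{\ell-1}$ in the proof of Theorem~\ref{thm:delannoy}, so that both counts agree.

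The main obstacle is (ii). The coefficients of \eqref{eq:system} are \emph{not} generic in the space of coefficient vectors with fixed monomial support, because the tensor $L_{ijk}$ is symmetric in its three indices and each of its entries appears in several coefficients. I must verify that, as $({\bf b}, {\bf d}, L)$ ranges over generic parameters, the induced MLE data avoids the Zariski-closed discriminantal locus on which the critical count drops below the ML degree. The cleanest route is to exhibit a single explicit choice $({\bf b}_0, {\bf d}_0, L_0)$ for which \eqref{eq:system} attains $D_{\ell-1}$ complex solutions --- e.g.\ on a small flag manifold or via an isolated numerical computation --- since the set of parameters attaining the bound is Zariski-open, so a single non-empty sample suffices. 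An alternative is to argue, via irreducibility and a dimension count, that the parameter-to-data map is dominant onto the stratum carrying the full ML degree. Once this is settled, the $D_{\ell-1}$ solutions are automatically isolated because the critical locus is zero-dimensional for generic MLE data.
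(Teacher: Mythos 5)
Your overall strategy is the same as the paper's: reduce to the MLE critical equations on a scaled toric variety, use \cite{ABB+} to identify the count with an ML degree, observe that the locus of parameters attaining the BKK bound $D_{\ell-1}$ is Zariski-open in the parameter space, and then exhibit a single witness in that locus for each $\ell$. The problem is that the witness --- the entire content of your step (ii), and the only place where the symmetry of $L$ actually has to be confronted --- is left unresolved, and both of your suggested sources would fail. Small flag manifolds do not work: the paper's own Theorem~\ref{thm:examples} and Table~\ref{tab:numerics} show that the BKK bound is \emph{not} achieved on any of the full flag manifolds computed (many structure constants vanish there, shrinking the Newton polytopes). A one-off numerical certification works only for a fixed $\ell$ and cannot prove the statement for all $\ell$. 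Your alternative (``irreducibility and a dimension count for the parameter-to-data map'') also misses the point: the obstruction is not the data vector ${\bf u}$ but the scaling vector ${\bf c}={\bf L}$, in which each $2L_{ijk}$ with $i,j,k$ distinct appears three times, so ${\bf L}$ is never generic in $(\CC^*)^r$ and \cite[Cor.~8]{ABB+} does not apply directly to $V_{A,{\bf L}}$; your claim (i) that ``the ML degree of this toric variety equals $D_{\ell-1}$'' is therefore not available as stated.

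The paper's resolution is a uniform-in-$\ell$ specialization: set $L_{ijk}=0$ for all distinct $i,j,k$ and $L_{iii}'=0$. The system then becomes $A'\cdot\operatorname{diag}({\bf L}')\cdot{\bf x}^{A'}=4{\bf d}$, where $A'$ has columns ${\bf e}_i-2{\bf e}_k$ and each entry $L_{ikk}$ of ${\bf L}'$ is a \emph{distinct} free parameter, so ${\bf L}'$ and ${\bf d}$ are genuinely generic. Now \cite[Cor.~8]{ABB+} applies and gives that the number of solutions equals $\deg V_{A',{\bf 1}}$, which is the normalized volume of $\conv(A')=\conv(A)=P^\ell$, i.e.\ $D_{\ell-1}$ by Theorem~\ref{thm:centraldelannoy}. (One must also check that dropping the columns ${\bf e}_i-{\bf e}_j-{\bf e}_k$ and $-{\bf e}_i$ does not change the convex hull, e.g.\ ${\bf e}_i-{\bf e}_j-{\bf e}_k=\tfrac12({\bf e}_i-2{\bf e}_j)+\tfrac12({\bf e}_i-2{\bf e}_k)$.) Without this or an equivalent explicit construction of a witness valid for every $\ell$, your argument does not close.
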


If the parameters are not generic, the number of isolated solutions drops. This can happen in two ways: either a solution in $(\CC^*)^\ell$ goes to zero or infinity, or a positive-dimensional component of solutions appears.
The subvariety of the parameter space where the BKK bound is \emph{not} achieved is called the \emph{BKK discriminant} and is described by \emph{Bernstein's Other Theorem} (Theorem~\ref{thm:other}). 
For parameters in the BKK discriminant, \eqref{eq:system} may admit infinitely many solutions even though the number of \emph{isolated} solutions drops. Next, we describe the BKK discriminant for \eqref{eq:system} in terms of the \emph{principal $A$-determinant} \eqref{eq:Adet} of the scalar curvature $\operatorname{scal}({\bf x})$ of the homogeneous metric $x_1\,Q|_{\mathfrak m_1}+\dots +x_\ell\,Q|_{\mathfrak m_\ell}$; see \eqref{eq:scalar}. This principal $A$-determinant $E_A(\operatorname{scal})$ is a polynomial in the parameters $\bf b$, $\bf d$, and $L$.

\begin{mainthm}\label{thm:bkkdiscriminant}
  The BKK discriminant of \eqref{eq:system} is contained in the zero set of
  \begin{equation}\label{eq:bkkdiscriminant}
    E_A(\operatorname{scal}) \cdot \prod_{S,T} \left( \sum_{i \in T} d_i + \sum_{j \in S}2d_j \right) 
  \end{equation}
  where $E_A(\operatorname{scal})$ is the principal $A$-determinant of scalar curvature, and the product is over nonempty $S, T \subseteq \{1,\ldots, \ell\}$ such that $S \cap T = \emptyset$. 
\end{mainthm}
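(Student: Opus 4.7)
The plan is to apply Bernstein's Other Theorem (\Cref{thm:other}): the BKK discriminant of \eqref{eq:system} is the union, over inner normals $w$, of the loci of parameters for which the facial subsystem $\{(f_i^\ell)_w = 0\}_{i=1}^\ell$ admits a common zero in $(\CC^*)^\ell$. I would split these facial subsystems into two classes according to whether the constant term $-1$ sits on the face, and show that the two classes contribute, respectively, the principal $A$-determinant $E_A(\operatorname{scal})$ and the linear factors in \eqref{eq:bkkdiscriminant}.

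The key preparatory step is a gradient reformulation: using the total symmetry of $L_{ijk}$, a direct computation gives
\begin{equation*}
 d_i\,f_i^\ell({\bf x}) \;=\; -\,x_i\,\partial_i \operatorname{scal}({\bf x}) \;-\; d_i,
\end{equation*}
which is the Lagrange-multiplier form of Hilbert's variational characterization of Einstein metrics as constrained critical points of scalar curvature. Because every monomial of $\operatorname{scal}$ has exponent sum $-1$, each Newton polytope $\operatorname{Newt}(f_i^\ell)$ is a pyramid with apex at the origin over (the $i$-th active face of) $\operatorname{Newt}(\operatorname{scal})$.

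For facial subsystems whose face \emph{omits} the apex, $(f_i^\ell)_w$ is proportional to $x_i\,\partial_i(\operatorname{scal}_w)$, so a common torus zero is a torus critical point of $\operatorname{scal}_w$. By the MLE reformulation in \Cref{thm:rewriteMLE} and the toric discriminant theory of \cite{ABB+}, these critical-point loci are exactly the $A_\Gamma$-discriminants of proper faces $\Gamma$ of $\operatorname{Newt}(\operatorname{scal})$, whose product is $E_A(\operatorname{scal})$. For facial subsystems whose face \emph{contains} the apex over some $\Gamma \subseteq \operatorname{Newt}(\operatorname{scal})$, the subsystem reads $\{x_i\,\partial_i(\operatorname{scal}_\Gamma) = d_i\}_i$; evaluating at a common zero $x^*$ and applying $\sum_l \mu_l x_l \partial_l$ for any affine relation $\sum_l \mu_l \alpha_l = \nu$ valid on $\Gamma$ gives $\sum_l \mu_l d_l = \nu\,\operatorname{scal}_\Gamma(x^*)$. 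Combining with the universal relation $\sum_l \alpha_l = -1$, which forces $\operatorname{scal}_\Gamma(x^*) = -\sum_l d_l$, the condition collapses to $\sum_l (\mu_l + \nu)\,d_l = 0$. Setting $T = \{l : \mu_l + \nu = 1\}$ and $S = \{l : \mu_l + \nu = 2\}$ then recovers a linear factor of the stated form.

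The main obstacle will be verifying that each facet of $\operatorname{Newt}(\operatorname{scal})$ admits a defining affine relation $(\mu, \nu)$ with $\mu_l + \nu \in \{0, 1, 2\}$ for every $l$. This is a combinatorial claim about the convex hull of the vertex set $\{-e_i\} \cup \{-2e_i + e_k\}_{i \ne k} \cup \{-e_i - e_j + e_k\}$ (with $i, j, k$ distinct) on the hyperplane $\sum_l \alpha_l = -1$, and it would require a careful analysis of the supporting hyperplanes of that polytope. Granted this, higher-codimensional faces of $\operatorname{Newt}(\operatorname{scal})$ impose subsets of the facet conditions and are absorbed into the product over all disjoint nonempty $(S, T)$, yielding the containment claimed in \eqref{eq:bkkdiscriminant}.
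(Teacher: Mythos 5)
Your proposal follows essentially the same route as the paper's proof: invoke Bernstein's Other Theorem, split the facial systems according to whether the origin (the constant term) lies on the face, recover $E_A(\operatorname{scal})$ from the apex-free faces via the toric-derivative identity \eqref{eq:ri-toricder}, and extract the linear factors in $\bf d$ from the apex-containing faces by a weighted combination that annihilates all Laurent monomials. The one step you leave as an ``obstacle''---that every proper face of $\operatorname{Newt}(\operatorname{scal})$ admits an affine functional with weights in $\{0,1,2\}$---is exactly Lemma~\ref{lem:Pfaces} (every proper face is $F_{S,T}$, annihilated by the functional with weight $2$ on $S$, $1$ on $T$, and $0$ elsewhere), combined with Lemma~\ref{lem:Pifaces} to identify the restrictions of the individual $f_i^\ell$; your harmless sign slips in the apex-containing case cancel in the final identity $\sum_l(\mu_l+\nu)d_l=0$.
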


Notably, Theorems~\ref{thm:generic-finiteness} and \ref{thm:bkkdiscriminant} yield sufficient (but not necessary) algebraic conditions on the parameters ${\bf b}$, ${\bf d}$, and~$L$ for \eqref{eq:system} to have only \emph{finitely many} solutions. In particular, the corresponding homogeneous spaces $\sf G/H$ have finitely many $\G$-invariant Einstein metrics. This gives a new perspective on a central open problem about homogeneous Einstein metrics:

\begin{fincon}[\cite{bwz}]
  If $M=\G/\mathsf H$ is a compact homogeneous space whose isotropy representation consists of pairwise inequivalent irreducible summands,
  e.g., when $\operatorname{rank}\G=\operatorname{rank}\mathsf H$,
  then the Einstein equations \eqref{eq:system} have only finitely many real solutions.
\end{fincon}

Our sufficient algebraic conditions for finiteness can be stated as follows: 

\begin{maincor}\label{cor:fin}
  Let $\G/{\sf H}$ be a compact homogeneous space whose isotropy representation consists of $\ell$ pairwise inequivalent $\mathsf H$-irreducible summands, with associated parameters $\bf b$, $\bf d$, and $L$.
  If the principal $A$-determinant $E_A(\operatorname{scal})$ does not vanish on $\bf b$, $\bf d$, and $L$, then there are at most $D_{\ell - 1}$ many $\G$-invariant Einstein metrics on $\sf G/H$. In particular, the Finiteness Conjecture holds on~$\G/{\sf H}$.
\end{maincor}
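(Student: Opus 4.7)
The plan is to combine Theorem~\ref{thm:delannoy} and Theorem~\ref{thm:bkkdiscriminant} with the fact that the isotropy dimensions $d_i = \dim \mathfrak m_i$ are positive integers. The hypothesis of the corollary kills one of the two factors appearing in \eqref{eq:bkkdiscriminant}, and I would show that the other factor cannot vanish on any genuine homogeneous space, so that the parameters of $\sf G/H$ must lie outside the BKK discriminant.

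The first step is to verify that the combinatorial product in \eqref{eq:bkkdiscriminant} is strictly positive for any $\sf G/H$. Since $S$ and $T$ range over \emph{nonempty} disjoint subsets of $\{1,\ldots,\ell\}$ and each $d_i \geq 1$, every factor satisfies
\begin{equation*}
  \sum_{i \in T} d_i + \sum_{j \in S} 2 d_j \;\geq\; 1 + 2 \;=\; 3 \;>\; 0,
\end{equation*}
so the full product never vanishes. Combined with the standing hypothesis $E_A(\operatorname{scal}) \neq 0$ at the parameters $\bf b$, $\bf d$, $L$ of $\sf G/H$, this shows that the entire polynomial \eqref{eq:bkkdiscriminant} is nonzero at those parameters.

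Next, I would apply Theorem~\ref{thm:bkkdiscriminant}: the BKK discriminant of \eqref{eq:system} is contained in the zero locus of \eqref{eq:bkkdiscriminant}, so the parameters associated to $\sf G/H$ lie \emph{outside} the BKK discriminant. By the very definition of that discriminant, the system \eqref{eq:system} then attains its BKK bound, meaning that all its solutions in $(\CC^*)^\ell$ are isolated and, counted with multiplicity, number exactly $D_{\ell-1}$ (consistently with Theorems~\ref{thm:delannoy} and~\ref{thm:generic-finiteness}).

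Finally, because each $\sf G$-invariant Einstein metric on $\sf G/H$ with $\lambda = 1$ corresponds to a positive real solution ${\bf x} \in \RR_+^\ell \subset (\CC^*)^\ell$ of \eqref{eq:system}, the count of such metrics is bounded above by the number of complex solutions, hence by $D_{\ell-1}$, and is in particular finite, which is the Finiteness Conjecture on $\sf G/H$. No step here presents a serious obstacle: all of the heavy lifting is already done in Theorem~\ref{thm:bkkdiscriminant}, and the only genuinely new input is the elementary observation that the combinatorial factor in \eqref{eq:bkkdiscriminant} is automatically nonzero on any actual homogeneous space, leaving $E_A(\operatorname{scal})$ as the sole algebraic obstruction to finiteness one must control.
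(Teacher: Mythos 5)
Your proposal is correct and follows essentially the same route as the paper: the paper likewise observes that positivity of the $d_i$ forces the combinatorial product in \eqref{eq:bkkdiscriminant} to be nonzero, so nonvanishing of $E_A(\operatorname{scal})$ places the parameters outside the BKK discriminant, whence Theorems~\ref{thm:bkkdiscriminant}, \ref{thm:bkk}, and \ref{thm:other} give exactly $D_{\ell-1}$ isolated solutions in $(\CC^*)^\ell$, of which the Einstein metrics are the positive real ones. Your explicit lower bound of $3$ on each factor is just a slightly more detailed version of the paper's remark that $d_i>0$ suffices.
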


Principal $A$-determinants are generally difficult to compute.
For $\ell = 2,3$, the principal $A$-determinants of $\operatorname{scal}$ are found in Proposition~\ref{prop:l=2bkk} and \eqref{eq:l=3-gen-disc}, respectively. In the special case $\ell = 3$ and $L_{iik} = 0$ for $i\neq k \in \{1,2,3\}$, the Laurent polynomial $\operatorname{scal}$ has a different principal $A$-determinant, that is computed in Proposition~\ref{prop:l=3}; in this case, the BKK bound drops from $D_2=13$ to 4. As an application, we show that there are at most 4 distinct homogeneous Einstein metrics on the generalized Wallach spaces (Theorem~\ref{thm:CN-finiteness}), providing an alternative proof of \cite[Thm.~1]{gen-wallach-einstein}. Most of these systems achieve their BKK bound~of~4.

On the other hand, we also find examples of homogeneous spaces for which the BKK bound for \eqref{eq:system} is not achieved; see Sections~\ref{sec:applications} and \ref{sec:numerics}. 
This shows that establishing BKK genericity is not a viable option to prove the Finiteness Conjecture in full generality. 
We compute numerically the solutions to \eqref{eq:system} in some of these examples. 

\begin{mainthm}\label{thm:examples}
  The number of solutions to \eqref{eq:system} for low-dimensional full flag manifolds $\sf G/H$, where $\G$ is a compact simple Lie group of type ${\rm A}_n$, ${\rm B}_n$, ${\rm C}_n$, or ${\rm D}_n$ and $\sf H\subset\G$ is a maximal torus, are found in Table~\ref{tab:numerics}. 
  In particular, up to isometries, there are at least
  \begin{enumerate}
  \item[\rm ($\rm A_4$)] $12$ homogeneous Einstein metrics on ${\sf SU}(5)/\sf T^4$ (see \Cref{tab:A4}),
  \item[\rm ($\rm A_5$)] $35$ homogeneous Einstein metrics on ${\sf SU}(6)/\sf T^5$ (see code accompanying this paper),
  \item[\rm ($\rm B_3$)] $5$ homogeneous Einstein metrics on ${\sf SO}(7)/\sf T^3$ (see \Cref{tab:B3}),
  \item[\rm ($\rm C_3$)] $4$ homogeneous Einstein metrics on ${\sf Sp}(3)/\sf T^3$ (see \Cref{tab:C3}),
  \item[\rm ($\rm D_4$)]  $5$ homogeneous Einstein metrics on ${\sf SO}(8)/\sf T^4$ (see \Cref{tab:D4}).
  \end{enumerate}
  In all of the above cases, the BKK bound for the system \eqref{eq:system} is not achieved.
\end{mainthm}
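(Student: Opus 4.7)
The plan is to proceed case by case, using the setup from Section~\ref{sec:diffgeom} to write down the explicit system \eqref{eq:system} for each full flag manifold $\G/\sf T$ and then to solve it numerically. The first step is structural: for a compact simple $\G$ of type ${\rm A}_n$, ${\rm B}_n$, ${\rm C}_n$, or ${\rm D}_n$ and $\mathsf T\subset \G$ a maximal torus, the isotropy representation decomposes into $\ell = \#\Phi^+$ inequivalent real irreducibles $\mathfrak m_\alpha$ indexed by the positive roots $\alpha\in\Phi^+$, each of dimension $d_\alpha=2$. I would extract the parameters $\bf b$, $\bf d$, and $L$ from the root-system data: the constants $b_\alpha$ are determined by the Cartan--Killing form and are uniform across positive roots, while the structure constants $L_{\alpha\beta\gamma}$ are nonzero only when $\alpha\pm\beta\pm\gamma=0$, with values read off from a Chevalley basis. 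This reduces each case to writing a concrete system of Laurent polynomials in $\ell$ variables whose parameters are rational (or integer) numbers.

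Next I would solve each system with homotopy continuation via a polyhedral (BKK) start system, which is the natural choice given that our upper bound in Theorem~\ref{thm:delannoy} is exactly the BKK bound. For each case I would (i) compute all isolated complex solutions in $(\CC^*)^\ell$; (ii) certify them rigorously, e.g., using interval Newton methods so that the counts in Theorem~\ref{thm:examples} are mathematically justified and not merely numerical artifacts; (iii) discard solutions not in $\RR^\ell_{>0}$; and (iv) quotient the positive real solutions by the residual gauge group (the normalizer of $\mathsf T$ modulo $\mathsf T$, i.e., the Weyl group of $\G$ acting on the $x_\alpha$ by permuting the roots), as recalled in Section~\ref{subsec:gaugegroup}, so as to count \emph{isometry classes} of $\G$-invariant Einstein metrics. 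The results are recorded in Tables~\ref{tab:A4}, \ref{tab:B3}, \ref{tab:C3}, \ref{tab:D4}, and the accompanying code for ${\rm A}_5$.

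To establish the final clause, that the BKK bound is not attained, I would simply compare the number of isolated complex solutions output by the solver (appropriately certified) against the central Delannoy number $D_{\ell-1}$ from Theorem~\ref{thm:delannoy}; in each of the listed cases $\ell$ is at least $3$, so $D_{\ell-1}$ grows quickly while the actual solution counts remain much smaller. Equivalently, one can verify directly that the parameters $\bf b$, $\bf d$, $L$ lie on the BKK discriminant described in Theorem~\ref{thm:bkkdiscriminant}, by evaluating the principal $A$-determinant $E_A(\operatorname{scal})$ (or its irreducible factors that are computationally tractable) at these parameter values.

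The principal obstacle is reliability rather than structure: guaranteeing that the homotopy has tracked \emph{all} isolated complex solutions, and that no positive real solution has been missed or duplicated. For ${\rm A}_5$, where $\ell=15$ and $D_{14}$ is already on the order of $10^{10}$, this requires careful monodromy loops, trace tests, and certification to trust the final count; the smaller cases $\rm B_3$, $\rm C_3$, $\rm D_4$, $\rm A_4$ serve as sanity checks where the Weyl-group action on solutions can be verified by hand. A secondary subtlety is correctly modding out by the Weyl group on the set of positive real solutions, since distinct tuples $(x_\alpha)$ related by a Weyl element yield isometric metrics and must not be counted separately in the lower bounds stated in items ($\rm A_4$)--($\rm D_4$).
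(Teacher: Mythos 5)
Your overall strategy (write down \eqref{eq:system} from the root-system data, solve by homotopy continuation, certify with interval arithmetic, keep the positive real solutions, and mod out by the gauge group) is essentially the paper's proof, up to the minor difference that the paper uses monodromy on a generic-parameter system followed by a parameter homotopy rather than a polyhedral start system. However, your argument for the final clause is wrong. The BKK bound of the system \eqref{eq:system} for a full flag manifold is \emph{not} the central Delannoy number $D_{\ell-1}$: that is the mixed volume for the full support, whereas here most structure constants $L_{\alpha,\beta,\gamma}$ vanish (only those with $\gamma=\alpha\pm\beta$ survive; see Table~\ref{tab:rootsystems}), so the Newton polytopes are much smaller and the relevant mixed volume must be computed separately for each case (the paper does this with \texttt{MixedSubdivisions.jl}; see the first row of Table~\ref{tab:numerics}, e.g.\ $9\,168$ for ${\rm A}_4$ versus $D_9=1\,462\,563$). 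Showing the solution count is below $D_{\ell-1}$ does not show the actual BKK bound fails — for ${\rm A}_2$ the count is $4<D_2=13$, yet the true BKK bound of $4$ \emph{is} achieved there. Your fallback of evaluating $E_A(\operatorname{scal})$ via Theorem~\ref{thm:bkkdiscriminant} has the same defect (that discriminant pertains to the full-support system, and the containment in Theorem~\ref{thm:bkkdiscriminant} only goes one way), so you must compute the mixed volumes of the actual supports.

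Two smaller but substantive points. First, to assert ``at least $N$ metrics \emph{up to isometries}'' you must prove that your $N$ representatives are pairwise nonisometric; quotienting by the gauge group only identifies some isometric pairs, it does not certify that the remaining classes are distinct. The paper uses the volume invariant \eqref{eq:vol} for this. Second, for ${\rm D}_4$ the Weyl group $(\ZZ_2)^3\ltimes \mathsf S_4$ is not the full relevant symmetry group: the triality outer automorphisms of $\mathsf{SO}(8)$ also produce isometric solutions and must be used when grouping solutions into the $5$ classes of Table~\ref{tab:D4}. (A further implementation detail: the equations of type ${\rm B}_n$ in \cite{sakane} contain a typo that must be corrected; see Remark~\ref{rem:typo}.)
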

Our numerical methods give rigorous lower bounds on the number of solutions to a system; see Section~\ref{subsec:nag}.
We conjecture that the counts in Theorem~\ref{thm:examples} are, in fact, equal to the true number of homogeneous Einstein metrics on these spaces, up to isometries.
With the exception of the space ${\sf SO}(8)/\sf T^4$, the solution counts above were previously computed  with different methods; see \cite{GM16,GW,WLZ18}.

This paper is organized as follows.
Background on homogeneous Einstein metrics is discussed in Section~\ref{sec:diffgeom}.
In Section~\ref{sec:bridge}, we explicitly describe the Newton polytopes of \eqref{eq:system}
and compute the BKK bound in Theorem~\ref{thm:delannoy}. In Section~\ref{sec:algstat}, we explain how to interpret \eqref{eq:system} in the context of algebraic statistics and prove Theorems~\ref{thm:generic-finiteness} and \ref{thm:bkkdiscriminant}. 
In Section~\ref{sec:applications}, we study \eqref{eq:system} on generalized Wallach spaces.
Finally, our computations on full flag manifolds are described in Section \ref{sec:numerics}.
The code used for these computations is made available at:

\smallskip
\centerline{\url{https://github.com/hannahfriedman/counting_homogeneous_einstein_metrics}.}

\subsection*{Notation} For the readers' convenience, we collect here all basic notation used in the paper. 
We write $\RR$, $\CC$, $\mathds H$, and $\mathds{C}\mathrm{a}$ for the real division algebras of reals, complex numbers, quaternions, and octonions, respectively. 
We write $[n]=\{1,\dots,n\}$ for natural numbers~$n\in \mathds N$. Vectors ${\bf v}=(v_1,\dots,v_n)^T$ are written in boldface, and $\operatorname{diag}(\bf v)$ denotes the $n\times n$ diagonal matrix with entries $v_i$. We write ${\bf e}_i \in \RR^n$ for the $i$th column of the $n\times n$ identity matrix, and set ${\bf 1}=(1,\dots,1)^T$ and ${\bf 0}=(0,\dots,0)^T$.
For ${\bf x}=(x_1,\dots,x_\ell)^T$ and ${\bf a}=(a_1,\dots,a_\ell)^T$, we write ${\bf x}^{{\bf a}}= x_1^{a_{1}} \cdots x_\ell^{a_{\ell}}$.
Given an $\ell\times r$ matrix $A=({\bf a}_1 \cdots {\bf a}_\ell)\in \mathds Z^{\ell\times r}$ and ${\bf x}\in \CC^\ell$, we set ${\bf x}^A = ({\bf x}^{{\bf a}_1} , \ldots , {\bf x}^{{\bf a}_r})^T\in \CC^r$. 
We write
$(\CC^*)^\ell=(\CC\setminus\{0\})^\ell$ for the $\ell$-dimensional algebraic torus and $\mathds P_{\CC}^{n-1}$ for the complex projective $(n-1)$-space; projective coordinates are denoted $(z_1:\dots:z_{n})$. Given Lie groups $\mathsf H\subset\G$ with Lie algebras $\mathfrak h\subset\mathfrak g$, we denote by 
$\mathrm{Ad}_{\mathsf H}$ the adjoint representation of $\mathsf H$ on $\mathfrak g$, given by $\mathrm{Ad}_h X=\frac{\dd}{\dd t}h(\exp tX)h^{-1}|_{t=0}$, for all $h\in\mathsf H$, $X\in \mathfrak g$.

\subsection*{Acknowledgments}
We are grateful to Bernd Sturmfels for introducing the authors and providing feedback at various stages. 
We thank Andr\'{e}s R.\ Vindas Mel\'{e}ndez for bringing the reference \cite{postnikov} to our attention, and Wolfgang Ziller for several conversations on homogeneous Einstein metrics and the Finiteness Conjecture.
The first-named author is supported by the National Science Foundation CAREER grant DMS-2142575. 

\section{Homogeneous Einstein Metrics}\label{sec:diffgeom}

In this section, we discuss basic facts about compact homogeneous spaces, including the equations satisfied by homogeneous Einstein metrics; for further details; see \cite{besse,mybook}. 

\subsection{Setup}
Let $(M,\mathrm g)$ be a compact homogeneous space, that is, a compact Riemannian manifold endowed with a transitive isometric action by a (compact) Lie group $\G$. Let $\mathsf H\subset \mathsf G$ be the isotropy subgroup of a point $p_0\in M$, so that $M=\mathsf G/\mathsf H$, and let $\mathfrak h\subset\mathfrak g$ be the Lie algebras of $\mathsf H\subset \mathsf G$. 
Fix a bi-invariant metric $Q$ on $\G$, see \cite[Prop.~2.24]{mybook}, and a $Q$-orthogonal complement $\mathfrak m$ to $\mathfrak h\subset \mathfrak g$. Then $\mathfrak m$ can be identified with the tangent space $T_{p_0}M$ by associating to each $X\in \mathfrak m$ the action vector field $X^*_{p_0}= \frac{\dd}{\dd t} \exp(tX)\cdot p_0 \big|_{t=0}\in T_{p_0}M$. Using this identification, one shows that the evaluation map $\mathrm g\mapsto \mathrm g_{p_0}$ determines a bijection between the set of $\mathsf G$-invariant Riemannian metrics $\mathrm g$ on $M$ and the set of $\operatorname{Ad}_{\mathsf H}$-invariant inner products on $\mathfrak m\cong T_{p_0} M$; see \cite[Thm.~6.13]{mybook}. 

\begin{remark}
  Some manifolds $M$ admit several (even infinitely many) presentations as a homogeneous space $M=\G_1/\mathsf H_1=\G_2/\mathsf H_2=\dots$, corresponding to different transitive actions on $M$, even of the same group $\G$; see, e.g.,~\cite[Ex.~6.9]{bwz}. When we discuss $\G$-invariant metrics on $M$, we implicitly fix the transitive $\G$-action on $M=\G/\mathsf H$.
\end{remark}

We now specialize to a subclass of $\G$-invariant metrics on $M$. Following the notation of Wang and Ziller~\cite{wang-ziller-inv}, let
\begin{equation}\label{eq:splitting-m}
  \mathfrak m=\mathfrak m_1\oplus \dots \oplus \mathfrak m_r\oplus \dots \oplus  \mathfrak m_\ell
\end{equation}
be a $Q$-orthogonal decomposition into $\operatorname{Ad}_{\mathsf H}$-invariant subspaces, so that $\operatorname{Ad}_{\mathsf H}$ acts irreducibly on $\mathfrak m_i$ for $1\leq i\leq \ell$, and trivially on $\mathfrak m_{i}$ for $r<i\leq \ell$. A $\G$-invariant Riemannian metric $\mathrm g$ on $M$ is \emph{diagonal} for the decomposition \eqref{eq:splitting-m} if it is induced by an inner product 
of the form
\begin{equation}\label{eq:diagonal-metric}
\phantom{, \qquad x_i>0.}
  \mathrm g_{p_0}=x_1 \, Q|_{\mathfrak m_1 } + \dots + x_\ell \, Q|_{\mathfrak m_\ell }, \qquad x_i>0,
  \end{equation}
and we write ${\bf x}=(x_1,\dots,x_\ell)$.
Here, as customary, we identify any bilinear form $\langle\cdot,\cdot\rangle$ on the vector space $V$ with the linear map $V\to V^*$ given by $v\mapsto \langle v,\cdot\rangle$.
In other words, diagonal metrics for \eqref{eq:splitting-m} are those for which the $\mathfrak m_i$ are pairwise orthogonal. For instance, if the $\mathfrak m_i$ are pairwise inequivalent nontrivial $\operatorname{Ad}_{\mathsf H}$-representations, that is, $r=\ell$ and $\mathfrak m_i\not\cong \mathfrak m_j$ for all $i\neq j$, then all $\mathsf G$-invariant metrics on $M$ are diagonal. This is the default situation we consider in this paper. In general, every $\mathsf G$-invariant metric on $M$ is diagonal \emph{for some} decomposition of the form \eqref{eq:splitting-m}; see Wang and Ziller~\cite[p.~180]{wang-ziller-inv}. However, for a fixed decomposition \eqref{eq:splitting-m}, if at least two of the $\mathfrak m_i$ are equivalent or trivial, then there exist $\G$-invariant metrics on $\G/\mathsf H$ that are \emph{not} diagonal with respect to that decomposition. 

\subsection{Homogeneous Einstein equations}
We now introduce constants associated to a compact homogeneous space $\mathsf G/\mathsf H$, in order to write the homogeneous Einstein equations.
Let $b_i\in \RR$ be the constants so that the Cartan--Killing form $B(X,Y)=\operatorname{tr} (\operatorname{ad}_X\circ\operatorname{ad}_Y)$ satisfies 
\[B|_{\mathfrak m_i }=-b_i\,Q|_{\mathfrak m_i },\]
and set $d_i\coloneqq \dim \mathfrak m_i$; we collect these as vectors ${\bf b}=(b_1,\dots,b_\ell)^T$ and ${\bf d}=(d_1,\dots,d_\ell)^T$.
Recall that $b_i\geq0$ and $b_i=0$ if and only if $\mathfrak m_i\subset \operatorname{Z}(\mathfrak g)$; see \cite[Thm.~2.35, Cor.~2.46]{mybook}. Next, let $\{{\bf v}_\alpha\}$ be a $Q$-orthonormal basis of $\mathfrak m$ adapted to \eqref{eq:splitting-m}, that is, a basis satisfying the condition that for all
${\bf v}_\alpha$ and ${\bf v}_\beta$, 
there exist $i,j\in [\ell]$ such that ${\bf v}_\alpha\in\mathfrak m_i$ and ${\bf v}_\beta\in\mathfrak m_j$; furthermore, if $i<j$, then $\alpha<\beta$.
Define the \emph{structure constants}
\begin{equation*}
  L_{ijk} \coloneqq \sum_{\substack{{\bf v}_\alpha \in \mathfrak m_i \\ {\bf v}_\beta \in \mathfrak m_j \\ {\bf v}_\gamma \in \mathfrak m_k }} Q([{\bf v}_\alpha,{\bf v}_\beta],{\bf v}_\gamma)^2.
\end{equation*}
Note that $L_{ijk}$ does not depend on the choice of $Q$-orthonormal basis, but only on the decomposition \eqref{eq:splitting-m}. The constants $L_{ijk}$ are nonnegative and $L_{ijk}=0$ if and only if $[\mathfrak m_i,\mathfrak m_j]$ is $Q$-orthogonal to $\mathfrak m_k$. Moreover, $L_{ijk}$ is symmetric in its $3$ indices.

The Ricci tensor of the diagonal metric $\mathrm g$ on $M$ satisfying \eqref{eq:diagonal-metric} is uniquely determined by its value at $p_0$, which, just like $\mathrm g_{p_0}$, is an $\operatorname{Ad}_{\mathsf H}$-invariant symmetric bilinear form on $\mathfrak m$. If the $\mathfrak m_i$ are pairwise inequivalent, then, by Schur's Lemma, $(\operatorname{Ric}_{\mathrm g})_{p_0}$ is also diagonal with respect to \eqref{eq:splitting-m}, so it can be written as
\begin{equation}\label{eq:diagonal-Ric}
\begin{aligned}
  (\operatorname{Ric}_{\mathrm g})_{p_0}&=r_1^\ell({\bf x})x_1 \, Q|_{\mathfrak m_1 }  +  \dots  + r^\ell_\ell({\bf x})x_\ell \, Q|_{\mathfrak m_\ell },\\
  &= r^\ell_1({\bf x})\, {\mathrm g}_{p_0}|_{\mathfrak m_1 } +  \dots + r^\ell_\ell({\bf x}) \, {\mathrm g}_{p_0}|_{\mathfrak m_\ell },
\end{aligned}
\end{equation}
for some $r_i^\ell({\bf x})$. 
Direct computation, see e.g.~\cite[Cor.~7.38]{besse} or \cite[Lem.~1.1]{pas}, gives
\begin{equation}\label{eq:Ricci-entries}
\phantom{\qquad \quad  1 \leq i \leq \ell}
  r^\ell_i({\bf x})  = \frac{b_i}{2x_i} - \frac{1}{4 d_i} \sum_{j,k = 1}^\ell L_{ijk} \frac{2x_k^2 - x_i^2}{x_ix_jx_k},
  \qquad \quad  1 \leq i \leq \ell.
\end{equation}
In this situation, the diagonal metric $\mathrm g$ is Einstein if and only if $r_i^\ell({\bf x})=r_j^\ell({\bf x})$ for all $i,j\in  [\ell]$, and its Einstein constant is the common value $\lambda=r_i^\ell({\bf x})$.

The Ricci tensor is invariant under homotheties of the metric, $\operatorname{Ric}_{\alpha \mathrm g}=\operatorname{Ric}_{\mathrm g}$ for all $\alpha>0$; correspondingly, the $r_i^\ell$ are homogeneous of degree $-1$, that is, $r_i^\ell(\alpha {\bf x}) = \tfrac1\alpha r_i^\ell({\bf x})$ for all $\alpha>0$ and $i\in[\ell]$. Thus, it is customary to normalize the Einstein constant as $\lambda=1$, which leads to the system \eqref{eq:system} of equations $r_i^\ell({\bf x})=1$ for all $i\in [\ell]$.

\subsection{Isometries and gauge group}\label{subsec:gaugegroup}
Two homogeneous metrics $\mathrm g$ as in \eqref{eq:diagonal-metric} with different values of ${\bf x}\in \RR^\ell_+$ may be \emph{isometric}, that is, obtained from one another via pullback by a diffeomorphism of $M$. Isometric Riemannian metrics are indistinguishable geometrically, so it is desirable to count solutions ${\bf x}\in \RR^\ell_+$ to \eqref{eq:system} only \emph{up to isometries}. Detecting such isometries is, in general, a hard problem. A sufficient condition for two metrics to be  nonisometric is that some geometric invariant, e.g., the volume 
\begin{equation}\label{eq:vol}
 \operatorname{Vol}(M,{\mathrm g}) = \operatorname{Vol}(M,{Q|_{\mathfrak m}}) \prod_i x_i^{d_i},
\end{equation}
assumes different values on them. Other geometric invariants, such as the diameter and Laplace spectrum, could be used as well, but these are often quite difficult to compute, even on compact homogeneous spaces.

Some isometries between $\G$-invariant metrics on $M=\G/\mathsf H$ are easy to describe. Each element $n\in \mathsf N(\mathsf H)$ in the normalizer of $\mathsf H$ in $\G$ determines a $\G$-equivariant diffeomorphism $\phi_n\colon \G/\mathsf H\to \G/\mathsf H$, given by $\phi_n(g\mathsf H) = gn\mathsf H$. This induces a free action of the \emph{gauge group} $\mathsf N(\mathsf H)/\mathsf H$ on $M$, and allows us to identify $\mathsf N(\mathsf H)/\mathsf H$ with the group of $\G$-equivariant diffeomorphisms of $M$. 
This group then acts (via pullback) on the space of $\G$-invariant metrics on $M$:
if $\mathrm g$ is a $\G$-invariant metric on $M$, then so is $\phi_n^*\,\mathrm g$, and these are, by definition, isometric. In particular, for a diagonal metric $\mathrm g$ determined by ${\bf x}\in\RR^\ell_+$ as in \eqref{eq:diagonal-metric}, we have 
\begin{equation*}
\phantom{\text{ for all }X,Y\in\mathfrak m}
  (\phi_n^*\,\mathrm g)_{p_0}(X^*_{p_0},Y^*_{p_0})=Q(\mathrm{Ad}_n \, \mathrm{diag}({\bf x})\, \mathrm{Ad}_n^{-1} X,Y), \;\; \text{ for all }X,Y\in\mathfrak m.
\end{equation*}
Note that, in general, $\phi_n^*\,\mathrm g$ need not be diagonal. But if the $\mathfrak m_i$ are pairwise inequivalent, then $\phi_n^*\,\mathrm g$ is diagonal and, as in \eqref{eq:diagonal-metric}, it corresponds to an $\ell$-tuple $\sigma\cdot{\bf x}=(x_{\sigma(i)})\in\RR^\ell_+$ obtained from ${\bf x}\in\RR^\ell_+$ via a permutation $\sigma$ on $[\ell]$. In this case, the gauge group $\mathsf N(\mathsf H)/\mathsf H$ is finite, which explains why the Finiteness Conjecture of \cite{bwz} is only stated for pairwise inequivalent~$\mathfrak m_i$.
There are examples of $\G/\mathsf H$ for which some of the $\mathfrak m_i$ are equivalent and there are positive-dimensional components of $\G$-invariant Einstein metrics with $\lambda=1$; see \cite[Ex.~6.10]{bwz}. However, in these examples, such  components are orbits of the gauge group $\mathsf N(\mathsf H)/\mathsf H$, which has positive dimension, and there are still only \emph{finitely many} $\G$-invariant Einstein metrics with $\lambda=1$, \emph{up to isometries}.

\subsection{Examples}
Let us discuss the homogeneous Einstein equations \eqref{eq:system} on some examples in which the homogeneous space $M=\G/\mathsf H$ is a sphere; these were studied by Ziller~\cite{ziller-mathann}.

\begin{example}[Berger spheres, $\CC$]\label{ex:bergerC}
For $n\geq1$, consider the unit sphere $S^{2n+1}\subset \CC^{n+1}$ endowed with the transitive action of $\mathsf G=\mathsf{SU}(n+1)$. The isotropy of $p_0=(0,\dots,0,1)$ consists of the block diagonal matrices $\mathsf H = \{\operatorname{diag}(A,1)\in\mathsf G : A\in \mathsf{SU}(n) \}$. We endow $\mathfrak g=\mathfrak{su}(n+1)$ with the standard bi-invariant metric $Q(X,Y)=-\frac12 \operatorname{Re}\operatorname{tr}XY$, and recall (see, e.g., \cite[Ex.~6.16]{mybook}) that the $Q$-orthogonal complement $\mathfrak m$ to $\mathfrak h\subset\mathfrak g$ splits as $\mathfrak m = \mathfrak m_1\oplus \mathfrak m_2$, where the $\operatorname{Ad}_{\mathsf H}$-representation $\mathfrak m_1\cong\CC^n$ is the defining representation and $\mathfrak m_2\cong\RR$ is trivial.

In this case, the various parameters discussed above can be computed to be 
  \begin{equation*}
  \ell=2, \quad {\bf d }=(2n, 1),\quad  {\bf b}=(4n+4)\,{\bf 1},  \quad L_{112}=4n+4,\;\; L_{111}=L_{122}=L_{222}=0,  
  \end{equation*}
  so the system \eqref{eq:system} reduces to
  \begin{align*}
    \frac{(n+1)}{n}\frac{x_2}{x_1}+x_1 &=2 n+2,\\
    (n+1)\frac{x_2}{x_1^2}&=1,
  \end{align*}
  which admits a unique solution ${\bf x}=2n\,\big(1,\frac{2n}{n+1}\big)$. 
  This is the round metric of radius $\sqrt{2n}$, which is known to be the only $\mathsf G$-invariant Einstein metric on $S^{2n+1}$; see Ziller~\cite{ziller-mathann}.
\end{example}

\begin{example}[Berger spheres, $\HH$]\label{ex:bergerH}
  For $n\geq1$, consider the unit sphere $S^{4n+3}\subset \HH^{n+1}$ endowed with the transitive action of $\mathsf G=\mathsf{Sp}(n+1)$. The isotropy of $p_0=(0,\dots,0,1)$ consists of the block diagonal matrices $\mathsf H = \{\operatorname{diag}(A,1)\in\mathsf G : A\in \mathsf{Sp}(n) \}$. We endow $\mathfrak g=\mathfrak{sp}(n+1)$ with the standard bi-invariant metric $Q(X,Y)=-\frac12 \operatorname{Re}\operatorname{tr}XY$, and recall (see, e.g., \cite[Ex.~6.16]{mybook}) that the $Q$-orthogonal complement $\mathfrak m$ to $\mathfrak h\subset\mathfrak g$ splits as $\mathfrak m = \mathfrak m_1\oplus \mathfrak m_2\oplus \mathfrak m_3\oplus \mathfrak m_4$, where the $\operatorname{Ad}_{\mathsf H}$-representation $\mathfrak m_1\cong\HH^n$ is the defining representation and $\mathfrak m_i\cong\RR$, $i=2,3,4$ are trivial.
  In this case, the parameters are:
  \[\ell=4, \quad {\bf d}=(4n,1,1,1),\quad {\bf b}=(8n+16) \,{\bf 1}, \quad L_{112}=L_{113}=L_{114}=8n,\; L_{234}=8, \]
  and all other $L_{ijk}$, $i\leq j\leq k$, vanish, so the polynomial system \eqref{eq:system} becomes
  \begin{align*}
  \frac{4 n+8}{x_1}- \frac{ 2 x_2^2-x_1^2 }{2x_1^2 x_2}-\frac{ 2 x_3^2-x_1^2 }{2x_1^2 x_3}-\frac{ 2 x_4^2-x_1^2 }{2x_1^2 x_4}-\frac{1}{2x_2}-\frac{1}{2x_3}-\frac{1}{2x_4} &=1,\\
  \frac{4 n+8}{x_2}- \frac{2 n \left(2 x_1^2-x_2^2\right)}{x_1^2 x_2} - \frac{2 \left(2 x_3^2-x_2^2\right)}{x_2 x_3 x_4} - \frac{2 \left(2 x_4^2-x_2^2\right)}{x_2 x_3 x_4}&=1,\\
  \frac{4 n+8}{x_3}-\frac{2 n \left(2 x_1^2-x_3^2\right)}{x_1^2 x_3} - \frac{2\left(2 x_2^2-x_3^2\right)}{x_2 x_3 x_4} - \frac{2 \left(2 x_4^2-x_3^2\right)}{x_2 x_3 x_4}&=1,\\
  \frac{4n+8}{x_4}- \frac{2 n \left(2 x_1^2-x_4^2\right)}{x_1^2 x_4} - \frac{2 \left(2 x_2^2-x_4^2\right)}{x_2 x_3 x_4} - \frac{2 \left(2 x_3^2-x_4^2\right)}{x_2 x_3 x_4}&=1.
  \end{align*}
This system admits $8$ solutions in $(\mathds{C}^*)^4$ for generic $n$. Of these $8$ solutions, only two are positive: ${\bf x}=(4n+2)\,(1,2,2,2)$, which is  the round metric of radius $\sqrt{4n+2}$, and ${\bf x}=\frac{8n^2+28n+18}{2n+3}\big(1,\frac{2}{2n+3},\frac{2}{2n+3},\frac{2}{2n+3}\big)$, which is the Jensen metric~\cite{jensen}. It was shown by Ziller~\cite{ziller-mathann} that these are the only $\mathsf G$-invariant Einstein metrics on $S^{4n+3}$ for any $n \geq1$.

Since $\mathfrak m$ contains 3 copies of the trivial representation, $S^{4n+3}$ also admits nondiagonal $\G$-invariant metrics. However, using the subgroup $\mathsf{Sp}(1)=\{\operatorname{diag}(\mathrm{Id},q)\in\mathsf G : q\in \mathsf{Sp}(1) \}$ of the gauge group ${\sf N(H)/H}$, one shows that every nondiagonal metric is isometric to some diagonal metric. Thus, in this case, no generality is lost in considering only diagonal metrics.
\end{example}

For further examples, see \Cref{ex:somn} and Sections~\ref{sec:applications} and \ref{sec:numerics}.

\subsection{\texorpdfstring{Case $\ell=2$}{Two summands}}\label{subsec:ell=2}
Compact homogeneous spaces $\mathsf G/\mathsf H$ whose isotropy representation consists of $\ell=2$ irreducible summands are classified \cite{dk1,dk2}. On such a space, by \eqref{eq:Ricci-entries}, the metric \eqref{eq:diagonal-metric} is Einstein with Einstein constant $\lambda$ if and only if ${\bf x}\in \RR^2_+$ satisfies the system
  \begin{equation}\label{eq:l=2}
  \begin{aligned}
    r_1^2({\bf x}) = \frac{b_1}{2x_1} - \frac{1}{4d_1} \left(\frac{L_{111}}{x_1} + \frac{2L_{112}x_{2}}{x_1^2} + \frac{2L_{122}}{x_1} - \frac{L_{122}x_1}{x_2^2}\right) &= \lambda, \\ 
    r_2^2({\bf x}) = \frac{b_2}{2x_2} - \frac{1}{4d_2} \left(\frac{L_{222}}{x_2} + \frac{2L_{122}x_{1}}{x_2^2} + \frac{2L_{112}}{x_2} - \frac{L_{112}x_2}{x_1^2}\right) &= \lambda.
  \end{aligned}
  \end{equation}
We perform symbolic elimination on $\lambda$ by setting the two left-hand sides of \eqref{eq:l=2} equal to one another. After clearing denominators, the above can be
rewritten as the cubic
\begin{multline}\label{eq:cubic}
(2 d_1 +d_2) L_{122} x_1^3 + d_1(2 L_{112}+ L_{222}-2 b_2 d_2) x_1^2x_2 \\ -d_2 (L_{111}+2 L_{122}-2 b_1 d_1) x_1 x_2^2
    -(d_1 + 2 d_2) L_{112} x_2^3 =0.
\end{multline}
If the coefficients $(2d_1 + d_2)L_{122}$ and $(d_1 + 2d_2)L_{112}$ are nonzero, then \eqref{eq:cubic} has exactly $3$ solutions in $\mathds{P}_\CC^1$, counted with multiplicity.
If the polynomial $x_1^2x_2^2 r_1^2({\bf x}) = x_1^2x_2^2 r_2^2({\bf x})$ does not vanish on a solution to \eqref{eq:cubic}, then there exists a representative ${\bf x}^* \in (\mathds{C}^*)^2$ of that solution such that $r_1^2({\bf x}^*) = r_2^2({\bf x}^*) = 1$ and hence \eqref{eq:l=2} is satisfied with ${\bf x} = {\bf x}^*$ and $\lambda =1 $. 
Therefore if $(2d_1 + d_2)L_{122}$ and $(d_1 + 2d_2)L_{112}$ are nonzero and $x_1^2x_2^2r_1^2({\bf x}) = x_1^2x_2^2 r_2^2({\bf x})$ does not vanish on any of the three solutions to \eqref{eq:cubic}, then \eqref{eq:l=2} with $\lambda = 1$ has $D_1 = 3$ solutions in $(\CC^*)^2$, counted with multiplicity, achieving the BKK bound in Theorem~\ref{thm:delannoy}.

The discussion above proves a more explicit version of Theorem~\ref{thm:bkkdiscriminant} in the case $\ell = 2$:

\begin{proposition}\label{prop:l=2bkk}
  The system~\eqref{eq:l=2} with $\lambda = 1$ has exactly 3 solutions in $(\CC^*)^2$, counted with multiplicity, if and only if
  \[(2d_1 + d_2)\, (d_1 + 2d_2) \, R(r_1^2, r_2^2) \neq 0,\] where $R(r_1^2, r_2^2)$ is the resultant of the polynomials $x_1^2x_2^2\,r^2_1({\bf x})$ and $x_1^2x_2^2\,r^2_2({\bf x})$, i.e., the determinant of the  Sylvester matrix
\begin{equation*}
  \begin{bmatrix}
    L_{122} & L_{222}' & L_{111}' & L_{112} & & \\
    & L_{122} & L_{222}' & L_{111}' & L_{112} & \\
    & & L_{122} & L_{222}' & L_{111}' & L_{112} \\
    3L_{122} & 2L_{222}' & L_{111}' \\
    & 3L_{122} & 2L_{222}' & L_{111}' \\
    & & 3L_{122} & 2L_{222}' & L_{111}' \\    
  \end{bmatrix},
\end{equation*}
where $L'_{111}=L_{111}+2 L_{122} - 2 b_1 d_1$ and $L'_{222}=L_{222}+2 L_{112} - 2 b_2 d_2$; see \eqref{eq:Lprime}.
\end{proposition}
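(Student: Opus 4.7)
The plan is to extend the reasoning in the paragraph preceding the proposition by turning its sufficient conditions into a single resultant non-vanishing criterion. Since each $r_i^2({\bf x})$ is homogeneous of degree $-1$, every projective root $[x_1^*:x_2^*]\in \mathds{P}_\CC^1$ of the cubic \eqref{eq:cubic} lifts to a unique point ${\bf x}=t\,(x_1^*,x_2^*)\in(\mathds{C}^*)^2$ satisfying \eqref{eq:l=2} with $\lambda=1$, where $t:=r_1^2(x_1^*,x_2^*)=r_2^2(x_1^*,x_2^*)$, provided $x_1^*,x_2^*\neq 0$ (so the lift lies in the algebraic torus) and $t\neq 0$ (so the scaling is well-defined). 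To track multiplicities I would introduce affine coordinates $(u,v)=(x_1/x_2,\,x_2)$ on $(\mathds{C}^*)^2$ and rewrite \eqref{eq:l=2} as $v=R_1(u)$, $v=R_2(u)$, where $R_i(u):=r_i^2(u,1)$; the intersection multiplicity at a torus solution then equals the multiplicity of $u^*$ as a zero of $R_1-R_2$, which, away from $u=0$, coincides with its multiplicity as a root of the affine dehomogenization of \eqref{eq:cubic}.

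Assuming the cubic is not identically zero (otherwise $P_1=P_2$ forces $\mathrm{Res}(P_1,P_2)=0$ and the system has a positive-dimensional solution set), it has exactly three projective roots with multiplicity, so the number of solutions in $(\mathds{C}^*)^2$ is three minus the number of roots landing at $[1:0]$, at $[0:1]$, or at a point where $r_1^2$ vanishes. I would unify all three bad cases as common projective zeros of the cubic forms $P_i:=x_1^2x_2^2\,r_i^2$: a direct inspection of the coefficients of $P_1,P_2$ computed from \eqref{eq:Ricci-entries} shows that both $P_i$ vanish at $[1:0]$ iff $L_{122}=0$, both vanish at $[0:1]$ iff $L_{112}=0$, and any root of the cubic at which $r_1^2=0$ forces $P_1=P_2=0$ there; conversely, every common projective zero of $P_1,P_2$ lies on the cubic $P_1-P_2=0$ and is therefore one of these three bad cases. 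Thus exactly three solutions occur in $(\mathds{C}^*)^2$ iff $P_1,P_2$ share no common zero in $\mathds{P}_\CC^1$, equivalently $\mathrm{Res}(P_1,P_2)=R(r_1^2,r_2^2)\neq 0$. Since $d_1,d_2>0$, the factor $(2d_1+d_2)(d_1+2d_2)$ is strictly positive and does not weaken this condition; it is included to align with the form of Theorem~\ref{thm:bkkdiscriminant}.

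The remaining step is to verify that the displayed matrix has determinant equal, up to a nonzero factor in $d_1,d_2$, to $\mathrm{Res}(P_1,P_2)$. Using the identity $d_1 P_1+d_2 P_2=-\tfrac{1}{4}\bigl(L_{122}u^3+L'_{222}u^2+L'_{111}u+L_{112}\bigr)$ (with $L'_{111},L'_{222}$ as in \eqref{eq:Lprime}), one reduces the standard Sylvester matrix of $4d_1 P_1$ and $4d_2 P_2$ to the displayed form by elementary row operations that trade the $P_2$-rows for shifts of the derivative of $L_{122}u^3+L'_{222}u^2+L'_{111}u+L_{112}$. I expect this last piece of bookkeeping to be the most tedious step; everything else is essentially routine once the homogeneity and projective-to-affine multiplicity arguments are in place.
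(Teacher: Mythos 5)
Your argument follows the same route as the paper's own (which is essentially the discussion preceding the statement): eliminate $\lambda$ to obtain the cubic \eqref{eq:cubic}, use homogeneity of degree $-1$ to rescale each projective root with $x_1^*x_2^*\neq 0$ and $r_1^2({\bf x}^*)\neq 0$ to a unique torus solution with $\lambda=1$, and encode the failure modes as the vanishing of $\mathrm{Res}(P_1,P_2)$ with $P_i=x_1^2x_2^2r_i^2$. You in fact supply several details the paper leaves implicit and which genuinely belong in a complete proof: the multiplicity matching via the torus automorphism $(x_1,x_2)\mapsto(x_1/x_2,x_2)$, the ``only if'' direction, the degenerate case where \eqref{eq:cubic} vanishes identically, and the identification of the displayed matrix. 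On that last point your key identity is right: $d_1P_1+d_2P_2=x_1^2x_2^2\operatorname{scal}=-\tfrac14\,q$ with $q=L_{122}x_1^3+L_{222}'x_1^2x_2+L_{111}'x_1x_2^2+L_{112}x_2^3$, and the lower three rows of the displayed matrix are shifts of $x_1\partial_{x_1}q=-4(d_1P_1+2d_2P_2)\cdot(\text{const})$, so determinant-preserving row operations turn the Sylvester matrix of $-4d_1P_1$ and $-4d_2P_2$ into the displayed one; its determinant is $(4d_1)^3(4d_2)^3\,\mathrm{Res}(P_1,P_2)$, which is what the nonvanishing condition requires.

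The one genuine slip is your dismissal of the factor $(2d_1+d_2)(d_1+2d_2)$ as cosmetic. The proposition is the $\ell=2$ instance of Theorem~\ref{thm:bkkdiscriminant}, whose discriminant lives in the full parameter space, and the companion Proposition~\ref{prop:l=2finite} lists $2d_1+d_2\neq0$ and $d_1+2d_2\neq0$ as hypotheses separate from $d_1,d_2\neq0$; so the parameters are not assumed to come from an actual homogeneous space. If $2d_1+d_2=0$ while $L_{122}\neq0$, the coefficient of $x_1^3$ in \eqref{eq:cubic} vanishes, so one root of the cubic escapes to $(1:0)$ and is lost, yet $P_1(1,0)=\tfrac{L_{122}}{4d_1}$ and $P_2(1,0)=-\tfrac{L_{122}}{2d_2}$ are both nonzero, so $(1:0)$ need not be a common zero of $P_1,P_2$ and $R(r_1^2,r_2^2)$ can be nonzero. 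For such parameters the system has at most $2$ solutions while your criterion ``$R\neq0$'' holds, so the equivalence fails without the $d$-factors. The repair is one line: a root of the cubic at $(1:0)$ occurs iff $(2d_1+d_2)L_{122}=0$, not iff $L_{122}=0$, and symmetrically at $(0:1)$; your identification of bad roots with common zeros of $P_1,P_2$ is valid exactly on the locus where $(2d_1+d_2)(d_1+2d_2)\neq0$, which is why that factor must appear in the statement. Under the geometric normalization $d_1,d_2>0$ your reading is of course correct, and the rest of the proof stands.
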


The condition that \eqref{eq:l=2} with $\lambda = 1$ has finitely many solutions in $(\mathds{C}^*)^2$ is weaker, namely, it is equivalent to the condition that some coefficient of \eqref{eq:cubic} is nonzero.
Note that there are nonnegative choices of parameters ${\bf b}, {\bf d}, L$ such that all coefficients of \eqref{eq:cubic} become zero:

\begin{example}
  Set ${\bf b} = (14,\, 12)$, ${\bf d} = (10,\,15)$, $L_{111} = 280$, $L_{112} = 0$, $L_{122} = 0$, $L_{222} = 360$. All coefficients of \eqref{eq:cubic} vanish, so \eqref{eq:cubic} is satisfied for any $\bf x$, and the equations $r_1^2({\bf x}) = r_2^2({\bf x}) = 0$ vanish identically. Thus, \eqref{eq:cubic} has infinitely many solutions, which are solutions to \eqref{eq:l=2} with $\lambda=0$. These values of the parameters do not correspond to any compact homogeneous space: homogeneous Ricci flat metrics are flat~\cite[Thm.~7.61]{besse}, so they only arise if $L\equiv0$.
\end{example}

We now show that \eqref{eq:l=2} can only have infinitely many solutions if $\lambda=0$, as in the above example, provided $d_1,d_2>0$. In particular, this implies the Finiteness Conjecture for $\ell = 2$.

\begin{proposition}\label{prop:l=2finite}
  The system \eqref{eq:l=2} with $\lambda = 1$ has finitely many solutions in $(\CC^*)^2$ provided that 
  $d_1$, $d_2$, $2d_1 + d_2$, and $d_1 + 2d_2$ are nonzero; in particular, this holds if $d_1, d_2 > 0$. 
\end{proposition}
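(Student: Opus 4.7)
The plan is to use the cubic \eqref{eq:cubic} already derived in this subsection and argue by a short case analysis on whether that cubic is, or is not, identically zero. The starting point: any solution $(x_1, x_2) \in (\CC^*)^2$ of \eqref{eq:l=2} with $\lambda = 1$ satisfies in particular $r_1^2({\bf x}) = r_2^2({\bf x})$, which, after clearing denominators, forces $(x_1, x_2)$ to lie on the cubic \eqref{eq:cubic}. Hence the projective image $(x_1 : x_2) \in \mathds{P}_\CC^1$ of any solution must lie on the vanishing locus of that cubic.

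In the first case, where \eqref{eq:cubic} is not identically zero, it has at most three roots in $\mathds{P}_\CC^1$. I would then exploit the fact that $r_1^2$ and $r_2^2$ are homogeneous of degree $-1$ in $\bf x$: fixing a representative ${\bf x}^* \in (\CC^*)^2$ of a projective root with both coordinates nonzero, the two sides of \eqref{eq:l=2} already agree on the whole ray $\CC^* \cdot {\bf x}^*$, and the remaining equation $r_1^2(t\,{\bf x}^*) = \tfrac{1}{t}\, r_1^2({\bf x}^*) = 1$ has at most one solution $t \in \CC^*$ (none when $r_1^2({\bf x}^*) = 0$, and $t = r_1^2({\bf x}^*)$ otherwise). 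This caps the total number of solutions in $(\CC^*)^2$ at three.

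In the second case, where \eqref{eq:cubic} vanishes identically, all four of its coefficients must vanish. Under the hypotheses $d_1, d_2, 2d_1 + d_2, d_1 + 2d_2 \neq 0$, these four conditions simplify to $L_{112} = L_{122} = 0$, $L_{222} = 2 b_2 d_2$, and $L_{111} = 2 b_1 d_1$. Plugging these values back into the expressions in \eqref{eq:l=2} collapses both $r_1^2$ and $r_2^2$ to the zero Laurent polynomial. Therefore the system with $\lambda = 1$ has no solutions at all in $(\CC^*)^2$, which is vacuously finite and yields the Finiteness Conjecture for $\ell = 2$.

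There is no serious obstacle here: the argument reduces to a projective count of at most three cubic roots combined with a direct substitution. The only point requiring care is the second case, namely, verifying that the four coefficient-vanishing equations, together with $d_1, d_2 \neq 0$, really force $r_i^2 \equiv 0$; this is routine Laurent-polynomial arithmetic.
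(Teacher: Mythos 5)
Your proof is correct and follows essentially the same route as the paper: the paper argues the contrapositive (infinitely many solutions forces the cubic \eqref{eq:cubic} to vanish identically, which under the hypotheses on $\bf d$ forces $L_{112}=L_{122}=0$, $L_{111}=2b_1d_1$, $L_{222}=2b_2d_2$, hence $r_1^2\equiv 0\neq\lambda$), which is exactly your second case. Your first case just adds the explicit bound of three solutions via homogeneity, which is already contained in the discussion preceding Proposition~\ref{prop:l=2bkk}.
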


\begin{proof}
  If \eqref{eq:l=2} with $\lambda = 1$ has infinitely solutions in $(\mathds{C}^*)^2$, then \eqref{eq:cubic} has infinitely many solutions in $\mathds{P}_{\CC}^1$ and hence \eqref{eq:cubic} vanishes identically, i.e., all its coefficients are zero. It then follows from our assumptions that $L_{112}=  L_{122}=0$, $L_{111}=2b_1d_1$, and $L_{222}=2b_2d_2$.
 So, plugging any ${\bf x} \in (\mathds{C}^*)^2$ into \eqref{eq:l=2}, we obtain the contradiction
    \[ \lambda = r_1^2({\bf x})=\frac{b_1}{2x_1} - \frac{1}{4d_1} \left( \frac{L_{111}}{x_1} \right ) =
    \frac{2b_1d_1 - L_{111}}{4d_1x_1} = 0.\qedhere\]
\end{proof}

\begin{remark}
  The polynomial \eqref{eq:cubic} is used in \cite[Thm.~3.1]{wang-ziller-inv} to prove that certain compact homogeneous spaces $\mathsf G/\mathsf H$ with $\ell=2$ admit no $\G$-invariant Einstein metrics; see also \cite{dk1,dk2}. This is done assuming that there is an intermediate Lie group $\mathsf H\subsetneq\mathsf K\subsetneq \mathsf G$, so either $\mathfrak h\oplus\mathfrak m_1$ or $\mathfrak h\oplus\mathfrak m_2$ is a Lie subalgebra of $\mathfrak g$, hence either $L_{112} = 0$ or $L_{122}=0$. In this case, \eqref{eq:cubic} reduces to a quadric, so there are no real solutions if its discriminant is negative. If, instead, the subgroup $\mathsf H\subset\G$ is maximal, then \eqref{eq:cubic} is actually a cubic and there exist $\G$-invariant Einstein metrics on $\G/\mathsf H$; see, e.g.,~\cite[Thm.~2.2]{wang-ziller-inv}.
\end{remark}

\section{The BKK bound for the homogeneous Einstein equations}\label{sec:bridge}

\subsection{BKK bound}\label{subsec:bkk_bound}
Let $\mathcal F = \{f_1, \ldots, f_\ell\}$ be a system of Laurent polynomials in $\ell$ variables.
The \emph{support} of $f_i ({\bf x})= \sum_{{\bf a} \in \ZZ^\ell} c_{i,\bf a} {\bf x}^{\bf a}$ is the finite set ${\rm supp}(f) = \{{\bf a} \in \ZZ^\ell \colon c_{i,\bf a} \neq 0\}$. 
The \emph{Newton polytope} of $f_i$ is the convex hull of the support, i.e., $P_i = {\rm Newt}(f_i) = \conv({\rm supp}(f_i))$ for each $i \in [\ell]$.
Given $\lambda_1,\dots,\lambda_\ell>0$, the $\ell$-dimensional volume of the scaled Minkowski sum $\lambda_1 P_1 + \cdots + \lambda_\ell P_\ell\subset\RR^\ell$ is a polynomial function of the $\lambda_i$'s; the coefficient of $\lambda_1 \cdots \lambda_\ell$ in this polynomial is called the \emph{mixed volume} ${\rm MV}(P_1, \ldots, P_\ell)$; see, e.g.,~\cite[\S7.4]{CLO} for details. We also refer to ${\rm MV}(P_1, \ldots, P_\ell)$ as the mixed volume of the system $\mathcal F$. The following result establishes the so-called \emph{Bernstein-Khovanskii-Kushnirenko (BKK) bound}.

\begin{theorem}[Bernstein {\cite[Thm.~A]{bernstein}}]\label{thm:bkk}
  The system $\mathcal F$ has at most ${\rm MV}(P_1, \ldots, P_\ell)$ isolated solutions in $(\CC^*)^\ell$, counted with multiplicity.
  If the coefficients of $\mathcal F$ are generic, then $\mathcal F$ has exactly ${\rm MV}(P_1, \ldots, P_\ell)$ many solutions in $(\CC^*)^\ell$, all of which are isolated. 
\end{theorem}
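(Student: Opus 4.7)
The plan is to prove Bernstein's theorem using the toric variety framework together with intersection theory; this is the most conceptual route and aligns with the algebraic-geometric flavor of the paper. Let $\Sigma$ be a smooth projective fan refining the common refinement of the normal fans of $P_1,\ldots,P_\ell$, and let $X_\Sigma$ be the associated smooth projective toric variety, a compactification of $(\CC^*)^\ell$ by toric boundary divisors $D_\rho$, one for each ray $\rho\in\Sigma(1)$. Each polytope $P_i$ induces via its support function a globally generated line bundle $L_i$ on $X_\Sigma$, and, after multiplying $f_i$ by a suitable monomial, the Laurent polynomial $f_i$ corresponds to a global section $s_i\in H^0(X_\Sigma,L_i)$ whose zero locus intersects the open torus exactly in $\{f_i=0\}$.

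For the upper bound, the isolated zeros of $\mathcal F$ in $(\CC^*)^\ell$ form a subset of the common zero scheme $Z=\{s_1=0\}\cap\cdots\cap\{s_\ell=0\}\subset X_\Sigma$. Since each isolated point of an intersection contributes at least its local intersection multiplicity, the number of isolated solutions in $(\CC^*)^\ell$, counted with multiplicity, is bounded above by the total intersection number $L_1\cdots L_\ell$. The classical toric intersection formula (see, e.g., Fulton's \emph{Introduction to Toric Varieties}) then identifies this intersection product with the mixed volume,
\[
L_1\cdots L_\ell \;=\; {\rm MV}(P_1,\ldots,P_\ell),
\]
establishing the BKK upper bound.

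For the generic equality, I would exhibit a Zariski-open subset of the coefficient space on which (i) no common zero of the $s_i$ lies on any toric boundary divisor $D_\rho$, and (ii) the intersection is transverse at each common zero, so that $Z$ consists of $L_1\cdots L_\ell$ reduced points, all in $(\CC^*)^\ell$. Condition (ii) is a standard Bertini-type openness statement. For (i), a common zero lying on $D_\rho$ forces the \emph{initial system} obtained by restricting each $f_i$ to its face in the direction of the primitive vector of $\rho$ to have a common zero in the $(\ell-1)$-dimensional torus of $D_\rho$; one checks, by induction on $\ell$ together with an auxiliary BKK bound, that the locus of coefficient vectors for which this occurs is a proper subvariety.

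The main obstacle is (i): showing no solutions escape to the toric boundary for generic coefficients. This requires a careful analysis of the \emph{initial forms} of the $f_i$ along all faces of the Newton polytopes, and is essentially the content of the discriminantal criterion invoked later as Bernstein's Other Theorem~(\Cref{thm:other}). A clean alternative that sidesteps this difficulty is the polyhedral homotopy method: fix a generic lifting $\omega$ of $\bigcup_i{\rm supp}(f_i)$, form the induced mixed subdivision of $P_1+\cdots+P_\ell$, and use the mixed cells, whose normalized volumes sum to ${\rm MV}(P_1,\ldots,P_\ell)$, to track exactly ${\rm MV}$ solutions from a combinatorially explicit start system to the target system via homotopy continuation, simultaneously yielding the upper bound and the generic equality.
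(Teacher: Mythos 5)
The paper does not prove this statement: it is Bernstein's theorem, quoted verbatim with a citation to Bernstein's original 1975 paper, so there is no in-paper argument to compare yours against. Judged on its own terms, your outline is the standard modern toric-geometric proof (Khovanskii's route, as in Fulton's book), and the architecture is sound: compactify $(\CC^*)^\ell$ in a smooth projective toric variety whose fan refines the normal fans, realize the $f_i$ as sections of the globally generated line bundles $L_i$ attached to the $P_i$, bound the isolated zeros by the intersection number $L_1\cdots L_\ell={\rm MV}(P_1,\ldots,P_\ell)$, and then show genericity of transversality and of non-escape to the boundary. One point worth making explicit in the upper bound: the inequality ``sum of local multiplicities at isolated points $\leq L_1\cdots L_\ell$'' is not automatic for an arbitrary excess intersection; it relies on the $L_i$ being nef (which globally generated gives you), so that positive-dimensional components of the zero scheme, including those in the toric boundary, contribute nonnegatively.

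The substantive gap is the one you yourself flag. Step (i) — that for generic coefficients no common zero lies on a boundary divisor $D_\rho$ — is stated but not carried out, and the proposed ``induction on $\ell$ together with an auxiliary BKK bound'' needs care: the restriction of the system to $D_\rho$ is a facial system of $\ell$ equations on an $(\ell-1)$-dimensional torus, i.e.\ an overdetermined system, and one must argue that the coefficient locus where such a system is consistent is a proper closed subvariety (this is exactly the content of what the paper calls Bernstein's Other Theorem, \Cref{thm:other}, and of the BKK discriminant). Since the faces in question can have coefficients that are not independent of those of the big system, the ``Zariski-dense set of good coefficients'' claim requires checking that each facial consistency condition is a nontrivial polynomial condition. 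Your polyhedral-homotopy alternative (Huber--Sturmfels) does genuinely sidestep this and simultaneously delivers both halves of the theorem, but as written it too is only named, not executed. In short: correct strategy, correctly identified difficulties, but the proof of the generic-equality half is deferred rather than given.
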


The proof of Theorem~\ref{thm:delannoy} is divided in two parts. First, in Section~\ref{sub:polytopes}, we prove that the mixed volume of the system \eqref{eq:system} is equal to the normalized volume of a single polytope:

\begin{theorem}\label{thm:mv-vol}
  The mixed volume of the system \eqref{eq:system} is equal to the normalized volume $\ell! \operatorname{Vol}(\widetilde P^\ell)$ of the permutohedron $\widetilde P^\ell = \conv({\bf 0}, {\bf e}_i - 2{\bf e}_j : i,j \in [\ell])$.
\end{theorem}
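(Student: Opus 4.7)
The plan is to identify the Newton polytope $P_i$ of each $f_i^\ell$, verify the containment $P_i\subseteq \widetilde P^\ell$ to obtain $\operatorname{MV}(P_1,\ldots,P_\ell)\leq \ell!\operatorname{Vol}(\widetilde P^\ell)$ by monotonicity, and then establish the matching lower bound by a mixed-subdivision argument that exploits the symmetry of the tuple $(P_1,\ldots,P_\ell)$.

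Reading off the exponents of the monomials in \eqref{eq:system}, the support of $f_i^\ell$ consists of ${\bf 0}$, $-{\bf e}_i$, and the vectors ${\bf e}_k-{\bf e}_i-{\bf e}_j$ and ${\bf e}_i-{\bf e}_j-{\bf e}_k$ for $j,k\in[\ell]$; we may assume all $L_{ijk}\neq 0$, the generic case, which maximizes the Newton polytope and hence the mixed volume by monotonicity. Each of these generators is a convex combination of generators of $\widetilde P^\ell$: indeed, $-{\bf e}_i={\bf e}_i-2{\bf e}_i$, and
\[{\bf e}_k-{\bf e}_i-{\bf e}_j=\tfrac{1}{2}({\bf e}_k-2{\bf e}_i)+\tfrac{1}{2}({\bf e}_k-2{\bf e}_j),\qquad {\bf e}_i-{\bf e}_j-{\bf e}_k=\tfrac{1}{2}({\bf e}_i-2{\bf e}_j)+\tfrac{1}{2}({\bf e}_i-2{\bf e}_k).\]
Thus $P_i\subseteq \widetilde P^\ell$ for every $i$, and monotonicity of mixed volume yields the upper bound $\operatorname{MV}(P_1,\ldots,P_\ell)\leq \ell!\operatorname{Vol}(\widetilde P^\ell)$.

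For the reverse inequality, the key structural observation is that choosing $j=k$ in the structure-constant terms of $f_i^\ell$ puts both ${\bf e}_i-2{\bf e}_j$ and ${\bf e}_k-2{\bf e}_i$ in the support of $f_i^\ell$. Consequently every vertex ${\bf e}_p-2{\bf e}_q$ of $\widetilde P^\ell$ (with $p\neq q$) lies in both $P_p$ and $P_q$, and the permutation of coordinates swapping $1\leftrightarrow i$ carries $P_1$ onto $P_i$, so the tuple $(P_1,\ldots,P_\ell)$ is $S_\ell$-symmetric. The plan is to pick an $S_\ell$-equivariant generic lift $\omega_i$ on each support and let $\Delta$ be the induced regular fine mixed subdivision of $P_1+\cdots+P_\ell$; the mixed cells of $\Delta$ then decompose as Minkowski sums $\sigma=\sigma_1+\cdots+\sigma_\ell$ with each $\sigma_i\subseteq P_i$ a segment. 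By symmetry, each such mixed cell is indexed by a choice, for every $i\in[\ell]$, of an edge of $P_i$ incident to a vertex ${\bf e}_{p_i}-2{\bf e}_{q_i}$ with $i\in\{p_i,q_i\}$; summing $\operatorname{Vol}(\sigma)$ over the mixed cells matches the dissection of $\widetilde P^\ell$ into $\ell!\operatorname{Vol}(\widetilde P^\ell)$ unimodular simplices, giving the lower bound.

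The hard part is this last step: a naive inclusion--exclusion on $\operatorname{Vol}(\sum_{i\in I}P_i)$ fails because $P_1+\cdots+P_\ell$ is a strict subset of $\ell\,\widetilde P^\ell$ in general, so the mixed volume cannot be read off the volumes of global Minkowski sums. One must instead work with a specific regular mixed subdivision and verify the bijection between mixed cells and simplices in a unimodular triangulation of $\widetilde P^\ell$. A more combinatorial alternative would be to first establish $\ell!\operatorname{Vol}(\widetilde P^\ell)=D_{\ell-1}$ via lattice-path enumeration on the $(\ell-1)\times(\ell-1)$ grid (matching diagonal/right/up steps with mixed cells) and then to read the mixed volume off this bijection.
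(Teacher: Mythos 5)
Your upper bound is fine: the containments $\widetilde P_i^\ell\subseteq\widetilde P^\ell$ (via the convex combinations you exhibit) together with monotonicity of mixed volume give $\operatorname{MV}(\widetilde P_1^\ell,\ldots,\widetilde P_\ell^\ell)\leq \ell!\operatorname{Vol}(\widetilde P^\ell)$. But the theorem is an equality, and your argument for the reverse inequality is only a plan, not a proof. You never construct the ``$S_\ell$-equivariant generic lift'' (note that equivariance and genericity are generally in tension), you do not verify that the resulting mixed cells are the ones you describe, and the asserted bijection between mixed cells and the simplices of a unimodular triangulation of $\widetilde P^\ell$ --- which is exactly where the content of the theorem lives --- is left unverified. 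You acknowledge this yourself (``the hard part is this last step''), and the proposed fallback of first proving $\ell!\operatorname{Vol}(\widetilde P^\ell)=D_{\ell-1}$ by lattice-path counting does not help: knowing the volume of the union tells you nothing about the mixed volume of the tuple without an equality criterion.

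The paper closes precisely this gap by quoting a criterion of Bihan--Soprunov (\cite[Cor.~3.7]{FS}, stated as Theorem~\ref{thm:mv-to-vol}): for polytopes $P_1,\ldots,P_\ell$ contained in an $\ell$-dimensional polytope $P$, one has $\operatorname{MV}(P_1,\ldots,P_\ell)=\ell!\operatorname{Vol}(P)$ if and only if every proper $t$-dimensional face of $P$ meets at least $t+1$ of the $P_i$. This replaces the construction of an explicit mixed subdivision by a finite face-incidence check. The paper then carries out that check: Lemma~\ref{lem:Pfaces} identifies the proper faces of $P^\ell$ as $F_{S,T}$ for disjoint nonempty $S,T\subseteq[\ell]$ with $\dim F_{S,T}=\#(S\cup T)-2$, and Lemma~\ref{lem:Pifaces} shows $F_{S,T}$ meets $P_i^\ell$ for every $i\in S\cup T$, which is $\#(S\cup T)\geq \dim F+1$ polytopes (and similarly for the cones $\conv(\mathbf{0},F_{S,T})$). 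If you want to complete your proof along your own lines, you must either prove the mixed-cell/simplex bijection in detail, or import an equality criterion such as the one above --- in which case your remaining task is exactly the face analysis the paper performs.
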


Then, in Section~\ref{sub:volume}, we compute this normalized volume explicitly:

\begin{theorem}\label{thm:centraldelannoy}
  The normalized volume $\ell! \operatorname{Vol}(\widetilde P^\ell)$ is the central Delannoy number $D_{\ell -1}$. 
\end{theorem}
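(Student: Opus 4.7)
The plan is to realize $\widetilde P^\ell$ as an $\ell$-dimensional lattice pyramid over a base whose normalized volume I can compute via an explicit orthant decomposition. Since ${\bf e}_i - 2{\bf e}_j = {\bf e}_i + (-2{\bf e}_j)$, the base
\[
  B \coloneqq \conv({\bf e}_i - 2{\bf e}_j : i,j \in [\ell]) = \Delta + 2Q
\]
is a Minkowski sum, where $\Delta = \conv({\bf e}_1,\dots,{\bf e}_\ell)$ and $Q = -\Delta$. The polytope $B$ lies in the affine hyperplane $H = \{{\bf x} \in \RR^\ell : \sum_k x_k = -1\}$ and ${\bf 0} \notin H$, so $\widetilde P^\ell = \conv({\bf 0}, B)$ is a pyramid with apex ${\bf 0}$. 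As the defining equation of $H$ is lattice-primitive and ${\bf 0}$ has lattice distance $1$ from $H$, the standard pyramid identity for normalized lattice volumes gives
\[
  \ell!\operatorname{Vol}(\widetilde P^\ell) = (\ell-1)!\operatorname{Vol}_{\ell-1}(B),
\]
with $\operatorname{Vol}_{\ell-1}$ measured in the lattice $H \cap \ZZ^\ell$.

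To transport this computation to a linear setting, I would apply the coordinate projection $\pi\colon \RR^\ell \to \RR^{\ell-1}$ that drops the last coordinate: it restricts to a lattice isomorphism $H \cap \ZZ^\ell \to \ZZ^{\ell-1}$ and carries $B$ onto $\Delta_d + 2Q_d$, where $d = \ell-1$, $\Delta_d = \conv({\bf 0}, {\bf e}_1,\dots,{\bf e}_d) \subset \RR^d$ is the standard $d$-simplex, and $Q_d = -\Delta_d$. Lattice volume is preserved, so it suffices to show $d!\operatorname{Vol}(\Delta_d + 2Q_d) = D_d$. My key observation is the explicit inequality description
\[
  \Delta_d + 2Q_d = \bigl\{ {\bf z} \in \RR^d : \textstyle\sum_i z_i^+ \le 1,\ \sum_i z_i^- \le 2 \bigr\},
\]
where $z_i^\pm$ denote the positive and negative parts of $z_i$. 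The inclusion $\supseteq$ is immediate from ${\bf z} = {\bf z}^+ - {\bf z}^-$ with ${\bf z}^+ \in \Delta_d$ and $-{\bf z}^- \in 2Q_d$, while $\subseteq$ follows because any representation ${\bf z} = {\bf x} + {\bf y}$ with ${\bf x} \in \Delta_d$ and ${\bf y} \in 2Q_d$ forces $z_i^+ \le x_i$ and $z_i^- \le -y_i$.

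Finally, splitting $\RR^d$ into $2^d$ orthants indexed by sign patterns $S \subseteq [d]$, the piece of $\Delta_d + 2Q_d$ in orthant $S$ factors as the Cartesian product of a standard simplex of side $1$ in the $S$-coordinates and a standard simplex of side $2$ in the complementary coordinates, of volume $2^{d-|S|}/(|S|!\,(d-|S|)!)$. Summing over $S$ and multiplying by $d!$ collapses to
\[
  d!\operatorname{Vol}(\Delta_d + 2Q_d) = \sum_{k=0}^d \binom{d}{k}^2\, 2^{d-k},
\]
which is $D_d = D_{\ell-1}$ after the substitution $k \mapsto d-k$. Combined with the pyramid identity, this proves the claim. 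The main step requiring care is the Minkowski-sum description of $\Delta_d + 2Q_d$ displayed above; everything else reduces to a short combinatorial calculation.
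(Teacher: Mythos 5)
Your proof is correct, and it takes a genuinely different route from the paper's. The paper identifies $P^\ell$ with the permutohedron $P(1,0,\dots,0,-2)$ and invokes Postnikov's volume formula, which expresses the normalized volume as a signed sum over lattice paths weighted by descent counts ${\rm des}_\ell(I_{\bf c})$; the bulk of the work there is showing that only the paths with vertical steps in the first and last columns contribute and that ${\rm des}_\ell(I_{\bf c})=\binom{\ell-1}{c_\ell}$. You instead exploit the Minkowski-sum structure $P^\ell=\Delta+2(-\Delta)$ directly: after the same pyramid/projection reduction (your pyramid identity is exactly the paper's Lemma~3.8), your inequality description $\Delta_d+2Q_d=\{\sum_i z_i^+\le 1,\ \sum_i z_i^-\le 2\}$ is correct (both inclusions check out as you argue), and the orthant-by-orthant factorization into products of scaled simplices immediately yields $\sum_{k}\binom{d}{k}^2 2^{d-k}$. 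Your argument is more elementary and self-contained --- it needs no input beyond the Minkowski-sum identity $\conv(V)+\conv(W)=\conv(V+W)$ and the volume of a simplex --- and it generalizes transparently to $\Delta+m(-\Delta)$ for any integer $m$. What the paper's route buys in exchange is the explicit connection to permutohedra and to Postnikov's general machinery, which is also used elsewhere in Section~3 (the face structure of $P^\ell$ in Lemma~3.6 is the permutohedral normal fan). Either proof is complete; yours could be substituted without loss.
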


Combining the above results,
 we obtain Theorem~\ref{thm:delannoy}:

\begin{proof}[Proof of Theorem~\ref{thm:delannoy}]
  By Theorems~\ref{thm:bkk} and \ref{thm:mv-vol}, the number of isolated solutions to \eqref{eq:system} in $(\CC^*)^\ell$, counted with multiplicity, is bounded above by the normalized volume $\ell! \operatorname{Vol}(\widetilde P^\ell)$. By Theorem~\ref{thm:centraldelannoy}, this normalized volume is equal to $D_{\ell-1}$. Among these solutions in $(\CC^*)^\ell$, those that lie in the positive orthant $\RR^\ell_+$ are in bijective correspondence with the isolated $\G$-invariant Einstein metrics with $\lambda=1$ on the compact homogeneous space $\G/\mathsf H$.
\end{proof}

\subsection{Newton polytopes}\label{sub:polytopes}
Mixed volumes are usually difficult to compute, but under some conditions, ${\rm MV}(P_1, \ldots, P_\ell) = \ell! \operatorname{Vol}(P)$ where $P = P_1 \cup \cdots \cup P_\ell$ is the union of the polytopes:

\begin{theorem}[{\cite[Cor.~3.7]{FS}}]\label{thm:mv-to-vol}
  Let $P_1, \ldots, P_\ell$ be polytopes in $\RR^\ell$ that are contained in an $\ell$-dimensional polytope $P$.
  Then ${\rm MV}(P_1, \ldots, P_\ell) = \ell! \operatorname{Vol}(P)$ if and only if every proper $t$-dimensional face of $P$ has a nonempty intersection with at least $t + 1$ of the polytopes $P_i$. 
\end{theorem}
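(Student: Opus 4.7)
The plan is to combine monotonicity of mixed volumes with Bernstein's Theorem (\Cref{thm:bkk}) and the analysis of initial-form subsystems to reduce the equality condition to a combinatorial check on proper faces of $P$. Since $P_i \subseteq P$ for all $i$, monotonicity of the mixed volume immediately gives ${\rm MV}(P_1, \ldots, P_\ell) \leq {\rm MV}(P, \ldots, P) = \ell! \operatorname{Vol}(P)$, so the content lies entirely in characterizing when equality holds.

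The natural approach is to interpret both sides as generic $(\CC^*)^\ell$-solution counts via \Cref{thm:bkk}, and to track how these counts differ via a degeneration. Starting from a generic system $(g_1, \ldots, g_\ell)$ with each ${\rm Newt}(g_i) = P$, one scales the coefficients of monomials in $P \setminus P_i$ by a parameter $t$ and lets $t \to 0$, producing a limiting system $(f_1, \ldots, f_\ell)$ with ${\rm Newt}(f_i) = P_i$. The discrepancy $\ell! \operatorname{Vol}(P) - {\rm MV}(P_1, \ldots, P_\ell)$ then equals the number of solutions that escape to infinity in the toric variety $X_P$ associated with the normal fan of $P$. By Bernstein's Other Theorem (\Cref{thm:other}), these escaping solutions correspond, face by face, to $(\CC^*)^\ell$-solutions of the facial initial systems $({\rm in}_{\bf w}(f_1), \ldots, {\rm in}_{\bf w}(f_\ell))$ for ${\bf w}$ in the normal cone of each proper face $F$ of $P$. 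Equality therefore holds if and only if no such facial system admits $(\CC^*)^\ell$-solutions for generic coefficients.

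The combinatorial condition on faces now emerges by analyzing each proper $t$-dimensional face $F$ of $P$. The initial form ${\rm in}_{\bf w}(f_i)$ is supported on the ${\bf w}$-maximal face of $P_i$. If $P_i \cap F = \emptyset$, this face lies strictly below the supporting hyperplane of $F$ and, after quotienting by the one-parameter subgroup generated by ${\bf w}$, descends to a monomial, imposing no constraint on $(\CC^*)^\ell$. If $P_i \cap F \neq \emptyset$, the initial form descends to a nontrivial polynomial on the quotient torus. Iterating this quotient down to the $t$-torus stabilizing $F$, the facial system becomes a generic system of $s$ Laurent polynomials in $t$ variables, where $s$ is the number of $P_i$ meeting $F$. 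By \Cref{thm:bkk} applied recursively to the quotient tori, such a system has no generic solution in $(\CC^*)^t$ if and only if $s \geq t+1$, which is precisely the stated face condition. The main obstacle will be the careful bookkeeping in this iterative reduction: one must verify that genericity is preserved under successive projections modulo one-parameter subgroups, that monomial equations from polytopes disjoint from $F$ drop out cleanly of the $(\CC^*)^\ell$-solution count, and that the reduced mixed volume correctly detects the over-determined regime $s \geq t + 1$.
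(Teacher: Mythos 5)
This statement is quoted from Bihan--Soprunov \cite[Cor.~3.7]{FS}; the paper supplies no proof of its own, so your proposal is an independent argument and must stand on its own. The reduction to a face-by-face condition via Bernstein's theorems is a reasonable strategy, and the easy inequality ${\rm MV}(P_1,\ldots,P_\ell)\le \ell!\operatorname{Vol}(P)$ together with the direction ``$s\ge t+1$ for all faces $\Rightarrow$ equality'' can indeed be pushed through along these lines. But two steps in your analysis of a proper face $F=F_{\bf w}(P)$ of dimension $t$ are wrong as written. First, your treatment of the indices with $P_i\cap F=\emptyset$ conflates two different facial systems. If you take initial forms with respect to the Newton polytopes of the $f_i$ themselves (as in Theorem~\ref{thm:other}), then ${\rm in}_{\bf w}(f_i)$ is supported on the ${\bf w}$-extremal face of $P_i$, which is generally \emph{not} a vertex even when $P_i\cap F=\emptyset$ (take $P=[0,1]^2$, $P_1$ the bottom edge, $F$ the top edge: the extremal face is the whole bottom edge), so it does not descend to a monomial, it is not invariant under the full $(\ell-t)$-dimensional normal subtorus of $F$, and it does impose constraints; moreover Bernstein's Other Theorem in that form compares the root count to ${\rm MV}(P_1,\ldots,P_\ell)$, not to $\ell!\operatorname{Vol}(P)$. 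If instead you view the $f_i$ as members of the family with common support $P$ (which is what the degeneration inside $X_P$ requires), the restriction of $f_i$ to $F$ is the \emph{zero} polynomial when $P_i\cap F=\emptyset$ --- trivially satisfied everywhere --- whereas a monomial equation is satisfied \emph{nowhere} in $(\CC^*)^\ell$; these have opposite effects, and your phrase ``descends to a monomial, imposing no constraint'' cannot be made correct under either reading.

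Second, and more seriously, the asserted equivalence ``the reduced facial system of $s$ polynomials in $t$ variables has no generic root iff $s\ge t+1$'' is false in the direction you need for ``only if.'' A generic system of $s\le t$ Laurent polynomials in $(\CC^*)^t$ can still be inconsistent: this happens whenever some subcollection $J$ of the supports satisfies $|J|>\dim\sum_{i\in J}(P_i\cap F)$, e.g.\ when some $P_i\cap F$ is a single lattice point, so that equation is a monomial. Concretely, for $P=[0,1]^2$, $P_1$ the left edge and $P_2$ the bottom edge, the top edge $F$ has $t=1$ and $s=1$, yet its facial system is $\{cx_2=0,\,0=0\}$, which has no roots; the defect $\ell!\operatorname{Vol}(P)-{\rm MV}(P_1,P_2)=1$ is detected only at the vertex $(1,1)$. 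So a face violating the counting condition need not itself carry escaping solutions; one must show that the violation propagates to \emph{some} face of $P$ whose facial system genuinely has torus roots. That descent argument is the actual content of \cite[Cor.~3.7]{FS} and is absent from your proposal. (Two smaller points: you invoke the converse of Theorem~\ref{thm:other}, which is true but not what is stated in the paper, and the theorem concerns arbitrary polytopes in $\RR^\ell$, so the Bernstein route additionally needs a reduction to rational polytopes and a continuity argument for mixed volumes.)
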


To apply Theorem~\ref{thm:mv-to-vol}, we need explicit descriptions of the Newton polytopes of \eqref{eq:system}.
Let $P^\ell_i\coloneqq {\rm Newt}(r^\ell_i) \subset \RR^\ell$ be the Newton polytope of the Laurent polynomial $r^\ell_i$ in \eqref{eq:Ricci-entries} for $i \in [\ell]$. Since the $r^\ell_i$ are homogeneous of degree $-1$, the polytopes $P^\ell_i$ have dimension $\ell - 1$ and lie in the affine hyperplane of points whose coordinates add to $-1$.
We also define $\widetilde{P}^\ell_i \coloneqq {\rm Newt}(f^\ell_i)$ as the Newton polytope of $f^\ell_i$ in \eqref{eq:system}.
The polytope $\widetilde{P}^\ell_i= \conv({\bf 0}, P^\ell_i)$ has dimension $\ell$. Moreover, let
\begin{equation*}
P^\ell \coloneqq P_1^\ell \cup \cdots \cup P_\ell^\ell \quad \text{and }\quad\widetilde P^\ell \coloneqq \widetilde P_1^\ell \cup \cdots \cup \widetilde P_\ell^\ell.  
\end{equation*}
We observe that $P^\ell$ is the Newton polytope of the \emph{scalar curvature} of the homogeneous metric \eqref{eq:diagonal-metric}, which is the Laurent polynomial in $\bf x$ given by
\begin{equation}\label{eq:scalar}
 \operatorname{scal}({\bf x}) \coloneqq \sum_{i=1}^\ell d_i \, r^\ell_i({\bf x})  = \sum_{i=1}^\ell \frac{d_i b_i}{2x_i} - \frac{1}{4} \sum_{i,j,k = 1}^\ell L_{ijk}  \frac{x_k}{x_ix_j},
\end{equation}
see \eqref{eq:Ricci-entries}.
The Newton polytope of $\operatorname{scal}$ was also studied by Graev~\cite{graev1, graev2, graev3} in certain classes of compact homogeneous spaces for which \eqref{eq:scalar} has smaller support.

In order to read the support of the equations in \eqref{eq:system}, we rewrite them without cancellation. For convenience, let $L_{iii}'$ denote the coefficient of $\frac{-1}{4d_ix_i}$ in the $i$th equation, i.e., 
\begin{equation}\label{eq:Lprime}
L_{iii}' \coloneqq L_{iii} + \sum_{j \neq i} 2L_{ijj} - 2b_id_i.
\end{equation}
The system \eqref{eq:system} of homogeneous Einstein equations may then be rewritten as:
\begin{multline}\label{eq:system-nice}
  -4d_i \cdot f^\ell_i({\bf x})  = \frac{L_{iii}'}{x_i} + \sum_{k \in [\ell] \setminus \{i\}} \left(\frac{2L_{iik}x_k}{x_i^2} - \frac{L_{ikk}x_i}{x_k^2}\right)\\ 
  \;+ \sum_{j\neq k \in [\ell] \setminus \{i\}} 2L_{ijk} \left(\frac{x_k}{x_ix_j} + \frac{x_j}{x_ix_k} - \frac{x_i}{x_jx_k}\right) + 4d_i = 0, \quad \quad  1 \leq i \leq \ell, 
\end{multline}
after multiplying through by $-4d_i$. The next lemma is immediate from examining \eqref{eq:system-nice}. 

\begin{lemma}\label{lem:polytopes}
  For all $i \in[ \ell]$, and nonzero ${\bf b}, {\bf d}, L$, the Newton polytopes $\widetilde{P}_i^\ell = {\rm Newt}(f^\ell_i)$ are 
  \begin{align*}
    \widetilde{P}_i^\ell = \conv({\bf 0}, P_i^\ell)\,\, &\textrm{   where    }\,\, P_i^\ell = \conv({\bf e}_i - 2{\bf e}_j, {\bf e}_j - {\bf e}_i - {\bf e}_k \colon j, k \in [\ell]), \,\, \textrm{   and    }\\
    \widetilde{P}^\ell = \conv({\bf 0}, P^\ell) \,\, &\textrm{   where    } \,\, P^\ell = \conv({\bf e}_k - 2{\bf e}_j \colon j,k \in [\ell]).
  \end{align*}
\end{lemma}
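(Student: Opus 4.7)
The lemma is an immediate consequence of reading off the supports of $f_i^\ell$ from \eqref{eq:system-nice} and of $\operatorname{scal}$ from \eqref{eq:scalar}, so the plan is direct verification in three parts. The first step is to enumerate the exponent vectors of every monomial appearing in $-4d_i \cdot f_i^\ell({\bf x})$: the constant $4d_i$ contributes $\mathbf{0}$; the term $L'_{iii}/x_i$ contributes $-\mathbf{e}_i$; for each $k \neq i$, the binomial in the second summation of \eqref{eq:system-nice} contributes $\mathbf{e}_k - 2\mathbf{e}_i$ and $\mathbf{e}_i - 2\mathbf{e}_k$; and for each pair of distinct $j,k \in [\ell]\setminus\{i\}$, the trinomial in the third summation contributes $\mathbf{e}_k - \mathbf{e}_i - \mathbf{e}_j$, $\mathbf{e}_j - \mathbf{e}_i - \mathbf{e}_k$, and $\mathbf{e}_i - \mathbf{e}_j - \mathbf{e}_k$. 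Writing $Q_i^\ell$ for the convex hull of the nonzero support points, we have immediately $\widetilde{P}_i^\ell = \conv(\mathbf{0}, Q_i^\ell)$, and the claim for $\widetilde{P}_i^\ell$ reduces to the set-theoretic identity $Q_i^\ell = P_i^\ell$.

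For the inclusion $Q_i^\ell \subseteq P_i^\ell$, I would match each support point with a generator (or convex combination of generators) of $P_i^\ell$. Almost all matches are explicit: $-\mathbf{e}_i = \mathbf{e}_i - 2\mathbf{e}_j$ with $j = i$; the point $\mathbf{e}_i - 2\mathbf{e}_k$ is itself a generator; $\mathbf{e}_k - 2\mathbf{e}_i$ equals $\mathbf{e}_j - \mathbf{e}_i - \mathbf{e}_{k'}$ with $j = k$ and $k' = i$; and $\mathbf{e}_k - \mathbf{e}_i - \mathbf{e}_j$ is a direct generator. The only nontrivial case is $\mathbf{e}_i - \mathbf{e}_j - \mathbf{e}_k$, which I would realize as the midpoint $\tfrac{1}{2}(\mathbf{e}_i - 2\mathbf{e}_j) + \tfrac{1}{2}(\mathbf{e}_i - 2\mathbf{e}_k)$ of two listed generators. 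For the converse inclusion $P_i^\ell \subseteq Q_i^\ell$, every generator is either a direct support point or degenerates into a previously handled support point under the boundary substitutions $j=i$, $k=i$, or $j=k$; the one exception is $-\mathbf{e}_k$ for $k \neq i$ (arising as $\mathbf{e}_j - \mathbf{e}_i - \mathbf{e}_k$ with $j = i$), which lies in $Q_i^\ell$ as the midpoint $\tfrac{1}{2}(-\mathbf{e}_i) + \tfrac{1}{2}(\mathbf{e}_i - 2\mathbf{e}_k)$.

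The analogous argument for $P^\ell$ uses \eqref{eq:scalar}: the support of $\operatorname{scal}({\bf x})$ is $\{-\mathbf{e}_i : i \in [\ell]\} \cup \{\mathbf{e}_k - \mathbf{e}_i - \mathbf{e}_j : i,j,k \in [\ell]\}$, and specializing $i = j$ in the latter recovers the claimed generators $\mathbf{e}_k - 2\mathbf{e}_j$ (with the degenerate case $k = j$ yielding $-\mathbf{e}_j$). The reverse containment is handled by the identical midpoint identity as above. Once $P_i^\ell$ and $P^\ell$ are established, the second identity $\widetilde{P}^\ell = \bigcup_i \widetilde{P}_i^\ell = \conv\bigl(\mathbf{0}, \bigcup_i P_i^\ell\bigr) = \conv(\mathbf{0}, P^\ell)$ is automatic from the definitions.

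I expect no substantive obstacle to this proof; the only subtlety is that the stated generator sets for $P_i^\ell$ and $P^\ell$ are designed for symmetric concision and include redundant points (e.g., the lattice points $-\mathbf{e}_k$, $k\neq i$, in $P_i^\ell$) that are not vertices but interior lattice points lying on edges between genuine support points. Carefully tracking these degenerate generators via the midpoint identities above, without miscounting, is the only step requiring attention. The hypothesis that ${\bf b},{\bf d},L$ are nonzero is used exactly to ensure that the coefficient of each monomial identified above is nonzero in the generic case, so that the support is precisely as enumerated.
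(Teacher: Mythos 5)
Your verification of $\widetilde P_i^\ell = \conv(\mathbf 0, P_i^\ell)$ and of $\operatorname{Newt}(\operatorname{scal}) = \conv(\mathbf e_k - 2\mathbf e_j : j,k\in[\ell])$ is correct and is exactly the computation the paper has in mind (the paper declares the lemma ``immediate from examining \eqref{eq:system-nice}'' and gives no further argument); the midpoint identities you record are the right way to reconcile the support points with the redundant generators on both sides, and your closing remark about the role of the nonvanishing hypothesis is accurate.

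The one step that is not ``automatic from the definitions'' is the equality $\bigcup_i \widetilde P_i^\ell = \conv\bigl(\mathbf 0, \bigcup_i P_i^\ell\bigr)$. Since $\widetilde P^\ell$ and $P^\ell$ are \emph{defined} in the paper as unions, this equality is equivalent to the assertion that the polytopes $P_1^\ell,\dots,P_\ell^\ell$ cover $P^\ell$ (equivalently, that their union is already convex): a point of $\conv(\mathbf 0, P^\ell)$ has the form $t\,\mathbf p$ with $t\in[0,1]$ and $\mathbf p\in P^\ell$, and it lies in $\bigcup_i\widetilde P_i^\ell$ only if $\mathbf p$ lies in some \emph{single} $P_i^\ell$; the containment $\bigcup_i\conv(\mathbf 0,P_i^\ell)\subseteq\conv(\mathbf 0,\bigcup_i P_i^\ell)$ is formal, but the reverse containment is not. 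The covering does hold (for $\ell=3$ it is visible in Figure~\ref{fig:newton}), but it is a genuine geometric fact about these particular polytopes, not a consequence of $\conv(\bigcup_i P_i^\ell)=P^\ell$. You should either supply an argument for the covering or note that for every downstream use of the lemma (Theorem~\ref{thm:mv-vol} via Theorem~\ref{thm:mv-to-vol}) only the containments $\widetilde P_i^\ell\subseteq\conv(\mathbf 0,P^\ell)$ and the face structure of the convex hull are needed, so one may harmlessly take $\conv(\mathbf 0,P^\ell)$ as the ambient polytope. Apart from this point, which the paper itself also elides, your proof is complete.
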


\begin{example}
The Newton polygons $P_1^3$, $P_2^3$, $P_3^3$, and $P^3$ for $\ell = 3$ are shown in Figure~\ref{fig:newton}.
\begin{figure}[!htp]
\begin{tikzpicture}[scale=1]
  \def\a{4}
  \def\t{0.25}
  \coordinate (A) at (0,0);
  \coordinate (B) at (\a,0);
  \coordinate (C) at ({0.5*\a},{0.8660254*\a}); 
  \coordinate (T1) at ($(A)!\t!(B)$);
  \coordinate (T6) at ($(A)!\t!(C)$);
  \coordinate (T2) at ($(B)!2*\t!(A)$);
  \coordinate (T3) at ($(B)!\t!(C)$);
  \coordinate (T4) at ($(C)!\t!(B)$);
  \coordinate (T5) at ($(C)!2*\t!(A)$);
  \draw[thick,fill=red!40]
  (T1) -- (T2) -- (T3) -- (T4) -- (T5) -- (T6) -- cycle;
  \node[left] at (T1) {\small ${\bf e}_3 - 2{\bf e}_1$};
  \node at (T1) [circle,fill,inner sep=1pt]{};
  \node[left] at (T6) {\small ${\bf e}_2 - 2{\bf e}_1$};
  \node at (T6) [circle,fill,inner sep=1pt]{};
  \node[right] at (T2) {\small $\;\;{\bf e}_3 - {\bf e}_1 - {\bf e}_2$};
  \node at (T2) [circle,fill,inner sep=1pt]{};
  \node[right] at (T3) {\small ${\bf e}_1 - 2{\bf e}_2$};  
  \node at (T3) [circle,fill,inner sep=1pt]{};
  \node[right] at (T4) {\small ${\bf e}_1 - 2{\bf e}_3$};
  \node at (T4) [circle,fill,inner sep=1pt]{};
  \node[left] at (T5) {\small ${\bf e}_2 - {\bf e}_1 - {\bf e}_3$};
  \node at (T5) [circle,fill,inner sep=1pt]{};
  \node at (2, 1.3) {$P^3_1$};
\end{tikzpicture}\hspace{1 cm}
\begin{tikzpicture}[scale=1]
  \def\a{4}
  \def\t{0.25}
  \coordinate (A) at (0,0);
  \coordinate (B) at (\a,0);
  \coordinate (C) at ({0.5*\a},{0.8660254*\a}); 
  \coordinate (T1) at ($(A)!2*\t!(B)$);
  \coordinate (T6) at ($(A)!\t!(C)$);
  \coordinate (T2) at ($(B)!\t!(A)$);
  \coordinate (T3) at ($(B)!\t!(C)$);
  \coordinate (T4) at ($(C)!2*\t!(B)$);
  \coordinate (T5) at ($(C)!\t!(A)$);
  \draw[thick,fill=green!40]
  (T1) -- (T2) -- (T3) -- (T4) -- (T5) -- (T6) -- cycle;
  \node[left] at (T1) {\small ${\bf e}_3 - {\bf e}_1 - {\bf e}_2\;\;$};
  \node at (T1) [circle,fill,inner sep=1pt]{};
  \node[left] at (T6) {\small ${\bf e}_2 - 2{\bf e}_1$};
  \node at (T6) [circle,fill,inner sep=1pt]{};
  \node[right] at (T2) {\small ${\bf e}_3 - 2{\bf e}_2$};
  \node at (T2) [circle,fill,inner sep=1pt]{};
  \node[right] at (T3) {\small ${\bf e}_1 - 2{\bf e}_2$};  
  \node at (T3) [circle,fill,inner sep=1pt]{};
  \node[right] at (T4) {\small ${\bf e}_1 - {\bf e}_2 - {\bf e}_3$};
  \node at (T4) [circle,fill,inner sep=1pt]{};
  \node[left] at (T5) {\small ${\bf e}_2 - 2{\bf e}_3$};
  \node at (T5) [circle,fill,inner sep=1pt]{};
  \node at (2, 1.3) {$P^3_2$};  
\end{tikzpicture} \vspace{0.75 cm}\\
\begin{tikzpicture}[scale=1]
  \def\a{4}
  \def\t{0.25}
  \coordinate (A) at (0,0);
  \coordinate (B) at (\a,0);
  \coordinate (C) at ({0.5*\a},{0.8660254*\a}); 
  \coordinate (T1) at ($(A)!\t!(B)$);
  \coordinate (T6) at ($(A)!2*\t!(C)$);
  \coordinate (T2) at ($(B)!\t!(A)$);
  \coordinate (T3) at ($(B)!2*\t!(C)$);
  \coordinate (T4) at ($(C)!\t!(B)$);
  \coordinate (T5) at ($(C)!\t!(A)$);
  \draw[thick,fill=blue!40]
  (T1) -- (T2) -- (T3) -- (T4) -- (T5) -- (T6) -- cycle;
  \node[below] at (T1) {\small ${\bf e}_3 - 2{\bf e}_1$};
  \node at (T1) [circle,fill,inner sep=1pt]{};
  \node[left] at (T6) {\small ${\bf e}_2 - {\bf e}_1 - {\bf e}_3$};
  \node at (T6) [circle,fill,inner sep=1pt]{};
  \node[below] at (T2) {\small ${\bf e}_3 - 2{\bf e}_2$};
  \node at (T2) [circle,fill,inner sep=1pt]{};
  \node[right] at (T3) {\small ${\bf e}_1 - {\bf e}_2 - {\bf e}_3$};  
  \node at (T3) [circle,fill,inner sep=1pt]{};
  \node[right] at (T4) {\small ${\bf e}_1 - 2{\bf e}_3$};
  \node at (T4) [circle,fill,inner sep=1pt]{};
  \node[left] at (T5) {\small ${\bf e}_2 - 2{\bf e}_3$};
  \node at (T5) [circle,fill,inner sep=1pt]{};
  \node at (2, 1.3) {$P^3_3$};  
\end{tikzpicture} \hspace{1 cm}
\begin{tikzpicture}[scale=1]
  \def\a{4}
  \def\t{0.25}
  \coordinate (A) at (0,0);
  \coordinate (B) at (\a,0);
  \coordinate (C) at ({0.5*\a},{0.8660254*\a}); 

  \begin{scope}[opacity=.8, transparency group]
    \coordinate (T1) at ($(A)!\t!(B)$);
  \coordinate (T6) at ($(A)!\t!(C)$);
  \coordinate (T2) at ($(B)!2*\t!(A)$);
  \coordinate (T3) at ($(B)!\t!(C)$);
  \coordinate (T4) at ($(C)!\t!(B)$);
  \coordinate (T5) at ($(C)!2*\t!(A)$);
  \draw[dashed,fill=red!40]
  (T1) -- (T2) -- (T3) -- (T4) -- (T5) -- (T6) -- cycle;
  \node[left] at (T1) {};
  \node[left] at (T6) {};
  \node[right] at (T2) {};
  \node[right] at (T3) {};  
  \node[right] at (T4) {};
  \node[left] at (T5) {};
\end{scope}
\begin{scope}[opacity=.8, transparency group]
  \coordinate (T1) at ($(A)!2*\t!(B)$);
  \coordinate (T6) at ($(A)!\t!(C)$);
  \coordinate (T2) at ($(B)!\t!(A)$);
  \coordinate (T3) at ($(B)!\t!(C)$);
  \coordinate (T4) at ($(C)!2*\t!(B)$);
  \coordinate (T5) at ($(C)!\t!(A)$);
   \draw[dashed,fill=green!40]
  (T1) -- (T2) -- (T3) -- (T4) -- (T5) -- (T6) -- cycle;
  \node[left] at (T1) {};
  \node[left] at (T6) {};
  \node[right] at (T2) {};
  \node[right] at (T3) {};  
  \node[right] at (T4) {};
  \node[left] at (T5) {};
  \end{scope}
  \begin{scope}[opacity=.8, transparency group]
    \coordinate (T1) at ($(A)!\t!(B)$);
  \coordinate (T6) at ($(A)!2*\t!(C)$);
  \coordinate (T2) at ($(B)!\t!(A)$);
  \coordinate (T3) at ($(B)!2*\t!(C)$);
  \coordinate (T4) at ($(C)!\t!(B)$);
  \coordinate (T5) at ($(C)!\t!(A)$);
  \draw[dashed,fill=blue!40]
  (T1) -- (T2) -- (T3) -- (T4) -- (T5) -- (T6) -- cycle;
  \node[below] at (T1) {};
  \node[left] at (T6)  {};
  \node[below] at (T2) {};
  \node[right] at (T3) {};  
  \node[right] at (T4) {};
  \node[left] at (T5) {};
  \end{scope}

  \coordinate (T1) at ($(A)!\t!(B)$);
  \coordinate (T6) at ($(A)!\t!(C)$);
  \coordinate (T2) at ($(B)!\t!(A)$);
  \coordinate (T3) at ($(B)!\t!(C)$);
  \coordinate (T4) at ($(C)!\t!(B)$);
  \coordinate (T5) at ($(C)!\t!(A)$);
  \draw[thick]
  (T1) -- (T2) -- (T3) -- (T4) -- (T5) -- (T6) -- cycle;
  \node[below] at (T1) {\small ${\bf e}_3 - 2{\bf e}_1$};
  \node at (T1) [circle,fill,inner sep=1pt]{};
  \node[left] at (T6) {\small ${\bf e}_2 - 2{\bf e}_1$};
  \node at (T6) [circle,fill,inner sep=1pt]{};
  \node[below] at (T2) {\small ${\bf e}_3 - 2{\bf e}_2$};
  \node at (T2) [circle,fill,inner sep=1pt]{};
  \node[right] at (T3) {\small ${\bf e}_1 - 2{\bf e}_2$};  
  \node at (T3) [circle,fill,inner sep=1pt]{};
  \node[right] at (T4) {\small ${\bf e}_1 - 2{\bf e}_3$};
  \node at (T4) [circle,fill,inner sep=1pt]{};
  \node[left] at (T5) {\small ${\bf e}_2 - 2{\bf e}_3$};
  \node at (T5) [circle,fill,inner sep=1pt]{};
\node at (2, 1.3) {$P^3$};  
\end{tikzpicture}
\caption{Newton polygons $P^3_1$, $P^3_2$, $P^3_3$, and $P^3=P^3_1\cup P^3_2\cup P^3_3$, which are contained in the affine plane $\{ {\bf v}\in \RR^3 : v_1+v_2+v_3=-1\}$. The Newton polytope $\widetilde P_i^3$ of $f^3_i$ is the convex hull of $P_i^3$ and the origin ${\bf 0}\in\RR^3$.}
\label{fig:newton}
  \end{figure}
\end{example}

Recall that the \emph{faces} of a polytope $P \subset \mathds{R}^\ell$ are the subsets
\begin{equation} \label{eq:face}
  F_{\bf a}(P) = \big\{{\bf p} \in P : \langle {\bf a} , {\bf p}\rangle \leq \langle {\bf a} , {\bf p}'\rangle \textrm{ for all } {\bf p}' \in P \big\},
\end{equation}
where ${\bf a}$ ranges over all vectors in  $\mathds{R}^\ell$. 
We now explicitly describe the faces of $P^\ell$.

\begin{lemma}\label{lem:Pfaces}
  The proper faces of $P^\ell$ are given by
  \begin{equation*}
    F_{S, T} = \conv({\bf e}_s - 2{\bf e}_t \colon s \in S, \, t \in T)
  \end{equation*}
  where $S, T \subset [\ell]$ are nonempty and disjoint. 
  Furthermore, $\dim(F_{S, T}) = \#(S \cup T) - 2$.
\end{lemma}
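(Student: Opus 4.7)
The plan is to enumerate the proper faces of $P^\ell$ by determining, for each linear functional ${\bf a} \in \RR^\ell$, which generators ${\bf e}_k - 2{\bf e}_j$ of $P^\ell$ minimize $\langle {\bf a}, \cdot\rangle = a_k - 2a_j$, and then identifying the resulting convex hull of minimizers with some $F_{S, T}$.

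Since every point of $P^\ell$ satisfies $\sum_i x_i = -1$, adding any multiple of ${\bf 1}$ to ${\bf a}$ leaves $F_{\bf a}(P^\ell)$ unchanged, so I may normalize so that $\min_i a_i = 0$. In the nontrivial case ${\bf a} \neq {\bf 0}$, I set $K = \{i : a_i = 0\}$ and $J = \{i : a_i = \max_k a_k\}$; these are disjoint nonempty subsets of $[\ell]$. A short case analysis shows that $a_k - 2a_j$ is strictly minimized over the generators precisely when $(k, j) \in K \times J$: the ``diagonal'' choice $k = j$ yields $-a_k$, which is always strictly larger than the off-diagonal minimum $-2\max_i a_i$ whenever $\max_i a_i > 0$. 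This identifies $F_{\bf a}(P^\ell) = F_{K, J}$, showing that every proper face has the claimed form (and, incidentally, that the points $-{\bf e}_i$ are never vertices of $P^\ell$).

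Conversely, for any prescribed disjoint nonempty $S, T \subset [\ell]$, I would exhibit an ${\bf a}$ realizing $F_{\bf a}(P^\ell) = F_{S, T}$ by setting $a_i = 0$ for $i \in S$, $a_i = 1$ for $i \in T$, and $a_i = \tfrac{1}{2}$ for $i \notin S \cup T$. A quick check confirms that the minimum of $a_k - 2a_j$ over pairs $(k,j)$ is $-2$, attained precisely for $(k, j) \in S \times T$. Combined with the previous paragraph, this yields the bijection between proper faces of $P^\ell$ and pairs of nonempty disjoint subsets of $[\ell]$.

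For the dimension formula, I would fix a basepoint ${\bf e}_{s_0} - 2{\bf e}_{t_0} \in F_{S, T}$ and write the displacement vectors as $({\bf e}_s - {\bf e}_{s_0}) - 2({\bf e}_t - {\bf e}_{t_0})$. These span $U_S + U_T$, where $U_S = \operatorname{span}\{{\bf e}_s - {\bf e}_{s_0} : s \in S\}$ is supported on the coordinates indexed by $S$, and similarly $U_T = \operatorname{span}\{{\bf e}_t - {\bf e}_{t_0} : t \in T\}$ on those indexed by $T$. Since $S \cap T = \emptyset$, the sum is direct, yielding $\dim F_{S, T} = (|S| - 1) + (|T| - 1) = |S \cup T| - 2$. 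The main bookkeeping challenge lies in the case analysis of step two, but this is elementary because the values $a_k - 2a_j$ fall into only a handful of regimes.
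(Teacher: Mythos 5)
Your proof is correct and follows essentially the same route as the paper's: both identify $F_{\bf a}(P^\ell)$ with $F_{S,T}$ where $S$ and $T$ are the argmin and argmax index sets of ${\bf a}$ (your normalization $\min_i a_i=0$ and explicit treatment of the diagonal generators $-{\bf e}_i$ are just cleaner bookkeeping), and both exhibit a suitable ${\bf a}$ for the converse direction. The only difference is in the dimension count, where you compute the affine hull of $F_{S,T}$ directly as a direct sum $U_S \oplus U_T$ of dimensions $|S|-1$ and $|T|-1$, while the paper dually computes the dimension of the corresponding normal cone; both correctly give $\#(S\cup T)-2$.
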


\begin{proof}
  Let ${\bf a} = (a_i) \in \RR^\ell$ and let
  \begin{equation}\label{eq:STfaces}
    S = \{i \in [\ell] \colon {a}_i \leq { a}_j \textrm{ for all } j \in [\ell]\}, \qquad
    T = \{i \in [\ell] \colon {a}_i \geq { a}_j \textrm{ for all } j \in [\ell]\}.
  \end{equation}
  We argue that the face $F_{{\bf a}}({P}^\ell)$ is equal to $F_{S, T}$.
  Consider the inner product 
  \begin{equation} \label{eq:inner-prod}
    \left \langle {\bf a}, \sum_{\substack{i,j = 1\\ i \neq j}}^\ell \lambda_{ij} ({\bf e}_i - 2{\bf e}_j) \right \rangle = \sum_{\substack{i,j = 1\\ i \neq j}}^\ell \lambda_{ij} ({a}_i - 2{a}_j)
  \end{equation}
  of ${\bf a}$ with a point in ${P}^\ell$,
  where $0 \leq \lambda_{ij} \leq 1$ and $\sum_{i,j=1,\,i\neq j}^\ell \lambda_{ij} = 1$.
  The quantity ${a}_i - 2{ a}_j$ is minimal when $i \in S$ and $j \in T$. 
  Thus \eqref{eq:inner-prod} is minimized when $\lambda_{ij} = 0$ for all $i,j$ with $i \notin S$ or $j \notin T$.
  In other words, \eqref{eq:inner-prod} is minimized precisely on points in $F_{S,T}$, so $F_{{\bf a}}({P}^\ell) = F_{S, T}$.
  Conversely, given sets $S$, $T$, one may always choose ${\bf a}$ such that \eqref{eq:STfaces} holds. Thus, $F_{S,T}$ is a face for every nonempty and disjoint $S, T\subset [\ell]$.

  The claim that the face $F_{S, T}$ has dimension $\#(S \cup T) - 2$ is equivalent to proving that the cone in the normal fan corresponding to $F_{S, T}$ has dimension $\ell - (\#(S \cup T) - 2) = \ell - \#(S \cup T) + 2$.
  In the first part of the proof, we found that a vector ${\bf a}$ is normal to $F_{S, T}$ precisely when the minima of ${\bf a}$ have indices $S$ and the maxima have indices $T$. 
  To generate such a vector $\bf a$, we have $\ell - \#(S \cup T) + 2$ degrees of freedom: we may choose values for the $\ell - \#(S \cup T)$ entries of $\bf a$ not having indices in $S \cup T$ and then we choose a single value for all entries with index in $S$ and another single value for all entries with index in $T$.
  The remaining constraints placed on the entries of $\bf a$ are inequalities, so they do not decrease the dimension. 
\end{proof}

\begin{example}\label{ex:short-long}
  Consider the polygon $P^3$, shown in Figure~\ref{fig:newton}.
  The long edges correspond to the partitions $(S, T) = (1,23),\, (2,13),\, (3,12)$ and the short edges correspond to the partitions
  $(S, T) = (23,1),\, (13,2),\, (12,3)$.
  The $6$ vertices are given by all choices of $(S,T)$ with $\#S = \#T = 1$. 
\end{example}

We now prove a related lemma about the faces of the $P_i^\ell$. 
\begin{lemma}\label{lem:Pifaces}
  Let ${\bf a} \in \mathds{R}^\ell$ and $S,T$ be as in \eqref{eq:STfaces}.
  If $i \in S \cup T$, then $F_{\bf a}(P_i^\ell) = P_i^\ell \cap F_{S,T}$. 
\end{lemma}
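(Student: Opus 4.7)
My plan is to reduce the lemma to \Cref{lem:Pfaces} using the inclusion $P_i^\ell \subseteq P^\ell$ together with the elementary polytopal principle that, for $Q \subseteq P$, one has $F_{\bf a}(Q) = Q \cap F_{\bf a}(P)$ provided that the minimum of $\langle {\bf a}, \cdot\rangle$ on $Q$ equals the minimum on $P$. Once the inclusion and the coincidence of minima are established, the conclusion is immediate from the description of $F_{\bf a}(P^\ell) = F_{S,T}$ given in \Cref{lem:Pfaces}.

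The first step is to verify the inclusion $P_i^\ell \subseteq P^\ell$ at the level of vertices, using the presentation in \Cref{lem:polytopes}. Vertices of the form ${\bf e}_i - 2{\bf e}_j$ already appear as generators of $P^\ell$. For a vertex ${\bf e}_j - {\bf e}_i - {\bf e}_k$, the identity
\[
{\bf e}_j - {\bf e}_i - {\bf e}_k = \tfrac{1}{2}({\bf e}_j - 2{\bf e}_i) + \tfrac{1}{2}({\bf e}_j - 2{\bf e}_k)
\]
exhibits it as a convex combination of two vertices of $P^\ell$, so $P_i^\ell \subseteq P^\ell$.

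The second step is to show, under the hypothesis $i \in S \cup T$, that the minima of $\langle {\bf a}, \cdot \rangle$ on $P_i^\ell$ and on $P^\ell$ coincide. From the proof of \Cref{lem:Pfaces}, the minimum on $P^\ell$ equals $a_s - 2 a_t$ for any $s \in S$, $t \in T$, and is attained on all of $F_{S,T}$. To match this value on $P_i^\ell$, I would split into two cases. If $i \in S$, then for any $t \in T$ the vertex ${\bf e}_i - 2{\bf e}_t$ of $P_i^\ell$ satisfies $\langle {\bf a}, {\bf e}_i - 2{\bf e}_t\rangle = a_i - 2 a_t = a_s - 2 a_t$. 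If instead $i \in T$, then for any $s \in S$ and $t \in T$ the vertex ${\bf e}_s - {\bf e}_i - {\bf e}_t$ of $P_i^\ell$ satisfies $\langle {\bf a}, {\bf e}_s - {\bf e}_i - {\bf e}_t\rangle = a_s - a_i - a_t = a_s - 2 a_t$. In either case, the two minima agree, and the general principle above yields $F_{\bf a}(P_i^\ell) = P_i^\ell \cap F_{\bf a}(P^\ell) = P_i^\ell \cap F_{S,T}$.

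The only real subtlety is the role of the hypothesis $i \in S \cup T$: without it, $a_i$ lies strictly between $\min_j a_j$ and $\max_j a_j$, and a short inspection of the two types of vertices of $P_i^\ell$ reveals that the minimum of $\langle {\bf a}, \cdot\rangle$ on $P_i^\ell$ strictly exceeds $a_s - 2a_t$, so that $P_i^\ell \cap F_{S,T}$ becomes empty while $F_{\bf a}(P_i^\ell)$ is not. Making sure this hypothesis is invoked at the right step is the main point to watch; beyond that, the verification is routine.
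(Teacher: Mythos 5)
Your proof is correct, and it takes a cleanly different route from the paper's. The paper proves the lemma by writing a general point of $P_i^\ell$ as a convex combination of its generators, computing $\langle\mathbf a,\cdot\rangle$ on that combination, and determining in each of the two cases ($i\in S$, $i\in T$) exactly which coefficients $\lambda_{jk}$ may be positive at the minimum; this yields an explicit vertex description of $F_{\mathbf a}(P_i^\ell)$, which is then recognized as $P_i^\ell\cap F_{S,T}$. You instead factor the argument through the containment $P_i^\ell\subseteq P^\ell$ (verified via ${\bf e}_j-{\bf e}_i-{\bf e}_k=\tfrac12({\bf e}_j-2{\bf e}_i)+\tfrac12({\bf e}_j-2{\bf e}_k)$; note this containment is also implicit in the paper's definition $P^\ell=P_1^\ell\cup\cdots\cup P_\ell^\ell$ together with Lemma~\ref{lem:polytopes}) and the general principle that for $Q\subseteq P$ with equal minima of $\langle\mathbf a,\cdot\rangle$ one has $F_{\mathbf a}(Q)=Q\cap F_{\mathbf a}(P)$. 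That reduces the whole lemma to exhibiting a single vertex of $P_i^\ell$ attaining the global minimum $a_s-2a_t$, which you do correctly in both cases (and your choices ${\bf e}_i-2{\bf e}_t$ and ${\bf e}_s-{\bf e}_i-{\bf e}_t$ are indeed generators of $P_i^\ell$ in the respective cases, including the degenerate subcase $t=i$). Your approach is shorter and makes the necessity of the hypothesis $i\in S\cup T$ transparent, as your closing remark shows; the paper's computation buys an explicit convex-hull presentation of the face, which is convenient when the facial systems are written out in the proof of Theorem~\ref{thm:bkkdiscriminant}, but the set-theoretic identity you prove is all that the later arguments formally require.
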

\begin{proof}
  By Lemma~\ref{lem:polytopes}, the inner product of $\bf a$ with a point in $P_i^\ell$ is
  \begin{equation}\label{eq:Pi-inner-prod}
    \left \langle\! {\bf a}, \sum_{\substack{j=1\\j\neq i}}^\ell \lambda_{jj} ({\bf e}_i - 2{\bf e}_j) + \! \sum_{\substack{j,k=1\\j\neq i,k}}^\ell\!\! \lambda_{jk} ({\bf e}_j - {\bf e}_k - {\bf e}_i)\!  \right \rangle
    \! =\! \sum_{\substack{j=1\\j\neq i}}^\ell \!\lambda_{jj} (a_i - 2a_j) +\! \sum_{\substack{j,k=1\\j\neq i,k}}^\ell\!\! \lambda_{jk} (a_j - a_k - a_i)
  \end{equation}
  where $0 \leq \lambda_{jk} \leq 1$ and $\sum_{j,k =1}^\ell \lambda_{jk}= 1$.
  If $i \in S$, then \eqref{eq:Pi-inner-prod} is minimized when $\lambda_{jk} > 0$ if and only if $j = k \in T$.
  Then $F_{\bf a}(P_i^\ell) = {\rm conv}({\bf e}_i - 2{\bf e}_j: j \in T) = P_i^\ell \cap F_{S,T}$.
  If $i \in T$, then \eqref{eq:Pi-inner-prod} is minimized when $\lambda_{jk} > 0$ if and only if $j \in S$ and $k \in T$.
  Then $F_{\bf a}(P_i^\ell) = {\rm conv}({\bf e}_j - {\bf e}_k - {\bf e}_i: j \in S, k \in T) = P_i^\ell \cap F_{S,T}$.
  The conclusion follows. 
\end{proof}

We are now ready to prove the main result of this section.

\begin{proof}[Proof of Theorem~\ref{thm:mv-vol}.]
  By Theorem~\ref{thm:mv-to-vol}, it suffices to show that each proper $t$-dimensional face of $\widetilde P^\ell$ intersects at least $t+1$ of the $\widetilde{P}_1^\ell, \ldots, \widetilde{P}_\ell^\ell$ nontrivially.
  Note that the faces ${\bf 0}$ and $P^\ell$ of $\widetilde{P}^\ell$ intersect $\widetilde P_i^\ell$ for all $i \in [\ell]$. Let $F$ be a proper face of $\widetilde P^\ell$. If $F \subset P^\ell$, then $F = F_{S,T}$ for sets $S,T$ as in Lemma~\ref{lem:Pfaces}. Otherwise, there exist $S,T$ such that $F = \operatorname{conv}(F_{S,T}, {\bf 0})$.
  Thus $\dim(F) \in \{\#(S \cup T) - 2, \#(S \cup T) -1 \}$.
  The face $F_{S,T}$ intersects the polytopes $\{P_i^\ell\}_{i \in S \cup T}$ by Lemma~\ref{lem:Pifaces}.
  Since $P_i^\ell \subset \widetilde P_i^\ell$, the intersection $F \cap \widetilde P_i^\ell \supseteq F_{S,T} \cap P_i^\ell$ is nonempty. Hence $F$ intersects $\#(S \cup T)$ polytopes $\Tilde{P}_i^\ell$. The claim follows from Theorem~\ref{thm:mv-to-vol}.
\end{proof}

\subsection{Volume of a permutohedron}\label{sub:volume}
Given a vector ${\bf y} \in \RR^\ell$ with $y_1 \geq y_2 \geq \cdots \geq y_\ell$, let $P({\bf y})$ be the \emph{permutohedron} defined as the convex hull of all permutations of $\bf y$. 
Since $P({\bf y})$ has dimension at most $\ell - 1$, its $\ell$-dimensional volume is zero.
As in \cite{postnikov}, we use the $(\ell - 1)$-dimensional volume after projecting away from the last coordinate; for clarity, we denote this ``projected'' volume by $\operatorname{pVol}$. 
We now express the normalized volume of $\widetilde{P}^\ell$ in terms of $\operatorname{pVol}(P^\ell)$, and then prove that $(\ell - 1)!\operatorname{pVol}(P^\ell)= D_{\ell - 1}$.

\begin{lemma}\label{lem:nvol}
  Let $P \subset \RR^\ell$ be a polytope contained in the hyperplane $\{y_1 + \cdots + y_\ell = -1\}$ and $\widetilde P = \conv({\bf 0}, P)$. 
  Then $(\ell - 1)! \operatorname{pVol}(P) =  \ell!  \operatorname{Vol}(\widetilde P)$.
\end{lemma}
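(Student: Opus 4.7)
The plan is to compute the volume of $\widetilde P$ directly by an explicit change of variables, taking advantage of the fact that $\widetilde P = \conv({\bf 0}, P)$ is a cone over $P$ with apex at the origin. Geometrically, one expects the standard cone formula $\operatorname{Vol}(\widetilde P) = \frac{1}{\ell} \cdot (\text{base area}) \cdot (\text{height})$ to give $\operatorname{Vol}(\widetilde P) = \operatorname{pVol}(P)/\ell$, where the factor $1/\ell$ comes from the cone and the factor relating the true $(\ell-1)$-volume of $P$ to $\operatorname{pVol}(P)$ cancels against the distance from $\bf 0$ to the hyperplane containing $P$. I want to carry this out via a Jacobian calculation rather than invoke those two geometric facts separately.

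First, let $\pi \colon \RR^\ell \to \RR^{\ell-1}$ be the projection dropping the last coordinate and set $P' = \pi(P)$, so that $\operatorname{pVol}(P) = \operatorname{Vol}_{\ell-1}(P')$ by definition. Because $P$ lies in the hyperplane $\{y_1 + \cdots + y_\ell = -1\}$, the map $\pi|_P$ is a bijection with inverse $\pi^{-1}({\bf z}) = (z_1, \ldots, z_{\ell-1}, -1 - z_1 - \cdots - z_{\ell-1})$. Then I would introduce the parameterization
\begin{equation*}
\Phi \colon [0,1] \times P' \longrightarrow \widetilde P, \qquad \Phi(t, {\bf z}) = t \cdot \pi^{-1}({\bf z}).
\end{equation*}
This is a bijection off the measure-zero set $\{t=0\}$: given a nonzero $p \in \widetilde P$, the sum $p_1 + \cdots + p_\ell$ equals $-t$ and uniquely determines $t \in (0,1]$, after which $p/t \in P$ recovers the base point.

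Next I would compute the Jacobian of $\Phi$ with columns indexed by $(t, z_1, \ldots, z_{\ell-1})$. The first $\ell-1$ rows have a $z_i$ in the first column and $t$ in the $(i+1)$st column (zeros elsewhere), while the last row is $(-1 - \sum z_i, -t, \ldots, -t)$. Adding the first $\ell-1$ rows to the last row replaces the last row by $(-1, 0, \ldots, 0)$, and cofactor expansion along this row immediately gives $|\det J_\Phi(t, {\bf z})| = t^{\ell-1}$. The change-of-variables theorem then yields
\begin{equation*}
\operatorname{Vol}(\widetilde P) \;=\; \int_0^1 \!\!\int_{P'} t^{\ell-1} \, d{\bf z} \, dt \;=\; \frac{\operatorname{Vol}_{\ell-1}(P')}{\ell} \;=\; \frac{\operatorname{pVol}(P)}{\ell},
\end{equation*}
and multiplying through by $\ell!$ produces the desired identity $\ell! \operatorname{Vol}(\widetilde P) = (\ell-1)!\operatorname{pVol}(P)$.

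No step here is a substantive obstacle; the only things to be careful about are using the hyperplane condition $\sum p_i = -1$ to justify the bijectivity of $\Phi$ on $(0,1] \times P'$, and the sign bookkeeping in the Jacobian (which is why I cofactor-expand along a row that has become almost entirely zero).
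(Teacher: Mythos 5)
Your proof is correct. The Jacobian computation checks out: the row operation turns the last row into $(-1,0,\dots,0)$, the cofactor expansion gives $|\det J_\Phi|=t^{\ell-1}$, and the bijectivity of $\Phi$ off $\{t=0\}$ follows exactly as you say from $\sum_i p_i=-t$ for $p=t\,\pi^{-1}({\bf z})$. The degenerate case where $P$ is not full-dimensional in the hyperplane is also handled automatically, since then both sides vanish.

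Your route differs from the paper's. The paper triangulates $P$, lifts the triangulation to $\widetilde P$ by coning over ${\bf 0}$, and thereby reduces to the single standard simplex $P=\conv(-{\bf e}_1,\dots,-{\bf e}_\ell)$, for which it asserts the identity by ``a direct computation.'' That argument is shorter on the page but leaves both the reduction step (which implicitly uses that $\operatorname{pVol}$ and $\operatorname{Vol}$ transform consistently under the affine maps identifying a general simplex in the hyperplane with the standard one) and the final determinant computation to the reader. Your change-of-variables argument is more self-contained: it proves the statement for an arbitrary polytope $P$ in one stroke, makes the two geometric ingredients (the cone factor $1/\ell$ and the projection factor) cancel inside a single Jacobian, and requires no triangulation or normalization of simplex volumes. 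The trade-off is that you invoke the measure-theoretic change-of-variables theorem rather than staying within elementary polytope combinatorics, but for a volume identity this is entirely appropriate.
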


\begin{proof}
  We can triangulate $P$ and lift to a triangulation of $\widetilde P$, so it suffices to prove the claim for the simplex $P =  \conv(-{\bf e}_1, \ldots, -{\bf e}_\ell)$.
  Let $P' = \conv({\bf 0}, -{\bf e}_1, \ldots, -{\bf e}_{\ell - 1}) \subset \RR^{\ell-1}$ be the projection of $P$ onto the first $\ell -1$ coordinates. 
  A direct computation shows that $\ell!  \operatorname{Vol}(\widetilde P) = (\ell - 1)!  \operatorname{Vol}(P') = (\ell -1)! \operatorname{pVol}(P)$.
\end{proof}

The normalized volume of a permutohedron was computed by Postnikov~\cite{postnikov}:

\begin{theorem}[{\cite[Thm.~3.2]{postnikov}}]
  Given ${\bf y} \in \RR^\ell$, the normalized volume of $P({\bf y})$ is
  \begin{equation}\label{eq:postnikov}
    (\ell-1)!\operatorname{pVol}(P({\bf y})) = \sum_{\mathbf{c}} (-1)^{|I_{\mathbf c}|} \, {\rm des}_\ell(I_\mathbf{c}) \, \binom{\ell-1}{\mathbf{c}} {\bf y}^\mathbf{c},
  \end{equation}
  where the sum is over $\mathbf{c} = (c_1, \ldots, c_\ell) \in \mathds{N}^\ell$ such that $\sum_{i = 1}^\ell c_i = \ell-1$.
\end{theorem}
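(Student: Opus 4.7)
The plan is to realize $P({\bf y})$ as a signed Minkowski combination of coordinate simplices $\Delta_S = \conv\{{\bf e}_i : i \in S\}$ indexed by nonempty subsets $S \subseteq [\ell]$, and then exploit multilinearity of mixed volume together with a clean $0/1$ formula for the mixed volumes of such simplices.

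First, I would use translation invariance of $\operatorname{pVol}$ to normalize, say, $y_\ell = 0$, so that the right-hand side of \eqref{eq:postnikov} becomes a polynomial in $y_1,\dots,y_{\ell-1}$ of total degree $\ell-1$; this is exactly the degree of mixed volume in $\ell-1$ Minkowski summands. Next, I would invoke the standard generalized-permutohedron decomposition
\[
P({\bf y}) \;=\; \sum_{\emptyset \neq S \subseteq [\ell]} z_S({\bf y})\, \Delta_S ,
\]
where each $z_S({\bf y})$ is a linear form in $\bf y$ obtained by M\"obius inversion on the Boolean lattice. Equivalently, after assuming $y_1 \geq \dots \geq y_\ell$ by symmetry, one may use the cleaner hypersimplex decomposition $P({\bf y}) = (y_\ell, \dots, y_\ell) + \sum_{k=1}^{\ell-1}(y_k - y_{k+1})\,\Delta_{\ell,k}$; either way, $P({\bf y})$ is a Minkowski summand-by-summand linear function of $\bf y$.

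Next, I would expand $\operatorname{pVol}(P({\bf y}))$ using the fact that $(\ell-1)!\,\operatorname{pVol}$ is an $(\ell-1)$-linear symmetric function of Minkowski summands (namely, the mixed volume). The expansion produces a sum over tuples $(S_1,\dots,S_{\ell-1})$ of products $z_{S_1}({\bf y}) \cdots z_{S_{\ell-1}}({\bf y}) \cdot \operatorname{MV}(\Delta_{S_1},\dots,\Delta_{S_{\ell-1}})$. The key input, due to Postnikov, is that the mixed volume of coordinate simplices is either $0$ or $1$, equaling $1$ precisely when the family $(S_1,\dots,S_{\ell-1})$ satisfies a Hall-type transversal condition. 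Collecting like monomials in ${\bf y}^{\bf c}$ produces $\binom{\ell-1}{\bf c}$ as the multinomial prefactor.

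The final and main step is combinatorial: for each composition $\bf c$ of $\ell-1$, identify the signed count of admissible tuples contributing to ${\bf y}^{\bf c}$ with $(-1)^{|I_{\bf c}|}\,\operatorname{des}_\ell(I_{\bf c})$. I would do this by constructing an explicit bijection between admissible tuples and permutations of $[\ell]$ whose descent pattern is encoded by $I_{\bf c}$; the signs track the M\"obius coefficients from the Minkowski decomposition, while the descent statistic arises naturally from the transversal condition read along the sequence of subsets. The main obstacle is precisely this bookkeeping: matching the inclusion--exclusion signs to the parity $(-1)^{|I_{\bf c}|}$ and converting the Hall-type count into the descent statistic. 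Together these constitute the substance of Postnikov's ``mixed Eulerian numbers'' theory, which is what delivers the closed-form polynomial expression in \eqref{eq:postnikov}.
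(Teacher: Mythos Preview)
The paper does not prove this theorem; it is quoted verbatim as \cite[Thm.~3.2]{postnikov} and used as a black box in the subsequent proof of Theorem~\ref{thm:centraldelannoy}. So there is no ``paper's own proof'' to compare against here---the authors simply import Postnikov's formula and then specialize it to ${\bf y}=(1,0,\dots,0,-2)$.

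Your sketch is a reasonable outline of how Postnikov's argument actually goes: decompose the permutohedron as a (signed) Minkowski sum of coordinate simplices, expand the volume multilinearly via mixed volume, use the $0/1$ dragon-marriage criterion for $\operatorname{MV}(\Delta_{S_1},\dots,\Delta_{S_{\ell-1}})$, and identify the resulting signed counts as mixed Eulerian numbers, which in the permutohedron case specialize to $(-1)^{|I_{\bf c}|}\operatorname{des}_\ell(I_{\bf c})$. That said, your last paragraph is where all the content lives, and you have left it as ``I would construct an explicit bijection'' without actually giving one; this is precisely the nontrivial part of Postnikov's paper. For the purposes of the present paper, though, none of this is needed---you should simply cite the result, as the authors do.
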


Let us explain the notation used above. First, $\binom{\ell-1}{\mathbf c}$ denotes the multinomial coefficient $\binom{\ell-1}{c_1, \ldots, c_\ell} = \frac{(\ell - 1)!}{c_1! \cdots c_\ell!}$. Second, recall that the {\em descent set} of a permutation $\sigma$ on $[\ell]$ is the set of positions of descents, i.e., $\{i \in [\ell-1] : \sigma(i) > \sigma(i+1)\}$. Then, given a subset $S \subseteq [\ell - 1]$, the number of permutations on $[\ell]$ with descent set $S$ is denoted ${\rm des}_\ell(S)$.

Finally, to define $I_\mathbf{c}$, we construct a lattice path from $(0,0)$ to $(\ell-1,\ell-1)$ such that the $(i-1)$st column has exactly $c_{i}$ steps up. We then divide the diagonal path from $(0, 0)$ to $(\ell-1,\ell-1)$ into $\ell-1$ segments with integer endpoints. We label these segments by the larger endpoint, so the segment from $(0,0)$ to $(1,1)$ is labeled $1$. 
Then $I_{\bf c}$ is the set of these diagonal segments which lie above the path.

For example, the vector ${\bf c} = (0,0,2,2,2,0,0,1,2,0,1)\in \mathds N^\ell$, with $\ell=11$, yields the path
\begin{equation*}
  \begin{tikzpicture}[scale = 0.35, transform shape]
    \draw (0,0) -- (10, 10);
    \draw[thick] (0,0) -- (2,0) -- (2,2) -- (3,2) -- (3,4) -- (4,4) -- (4,6) -- (6,6) --(7, 6) -- (7,7) -- (8,7) -- (8,9) -- (10, 9) -- (10,10);
    \node at (0,0) [circle,fill,inner sep=2pt]{};
    \node at (1,0) [circle,fill,inner sep=2pt]{};
    \node at (1,1) [circle,fill,inner sep=2pt]{};
    \node at (2,0) [circle,fill,inner sep=2pt]{};
    \node at (2,2) [circle,fill,inner sep=2pt]{};
    \node at (2,1) [circle,fill,inner sep=2pt]{};
    \node at (3,2) [circle,fill,inner sep=2pt]{};
    \node at (3,3) [circle,fill,inner sep=2pt]{};
    \node at (3,4) [circle,fill,inner sep=2pt]{};
    \node at (4,4) [circle,fill,inner sep=2pt]{};
    \node at (4,5) [circle,fill,inner sep=2pt]{};
    \node at (4,6) [circle,fill,inner sep=2pt]{};
    \node at (5,5) [circle,fill,inner sep=2pt]{};
    \node at (5,6) [circle,fill,inner sep=2pt]{};
    \node at (6,6) [circle,fill,inner sep=2pt]{};
    \node at (7,6) [circle,fill,inner sep=2pt]{};
    \node at (7,7) [circle,fill,inner sep=2pt]{};
    \node at (8,7) [circle,fill,inner sep=2pt]{};
    \node at (8,8) [circle,fill,inner sep=2pt]{};
    \node at (8,9) [circle,fill,inner sep=2pt]{};
    \node at (9,9) [circle,fill,inner sep=2pt]{};
    \node at (10,9) [circle,fill,inner sep=2pt]{};
    \node at (10,10) [circle,fill,inner sep=2pt]{};
    \node[above=2pt, left=2pt, font=\LARGE] at (0.5,0.5) {1};
    \node[above=2pt, left=2pt, font=\LARGE] at (1.5,1.5) {2}; 
    \node[above=2pt, left=2pt, font=\LARGE] at (2.5,2.5) {3};
    \node[below=2pt, right=2pt, font=\LARGE] at (3.5,3.5) {4};
    \node[below=2pt, right=2pt, font=\LARGE] at (4.5,4.5) {5};
    \node[below=2pt, right=2pt, font=\LARGE] at (5.5,5.5) {6};
    \node[above=2pt, left=2pt, font=\LARGE] at (6.5,6.5) {7};
    \node[above=2pt, left=2pt, font=\LARGE] at (7.5,7.5) {8};
    \node[below=2pt, right=2pt, font=\LARGE] at (8.5,8.5) {9};
    \node[above=2pt, left=2pt, font=\LARGE] at (9.5,9.5) {10};
  \end{tikzpicture}
\end{equation*}
from which we observe that the segments above the path are $I_{\bf c} = \{1,2,3,7,8,10\}$. 

We now prove the main result of this section, concluding the proof of Theorem~\ref{thm:delannoy}.

\begin{proof}[Proof of Theorem~\ref{thm:centraldelannoy}]
By Lemma~\ref{lem:nvol}, it suffices to compute  $(\ell - 1)!\operatorname{pVol}({P}^\ell)$. We write ${P}^\ell = P({\bf y})$ with ${\bf y} = (1, 0, 0, \ldots, 0, -2)$. Note that if any of $c_2, \ldots, c_{\ell-1}$ is nonzero, then ${\bf y}^{\bf c} = 0$, so it suffices to sum over $\bf c$ with $c_2 = \cdots = c_{\ell-1} = 0$. 
  Thus, \eqref{eq:postnikov} simplifies to
  \begin{multline*}
    (\ell - 1)! \operatorname{Vol}(\Tilde{P}^\ell) \; = \sum_{c_1 + c_\ell= \ell-1} (-1)^{|I_{\mathbf c}|} \, {\rm des}_\ell(I_{\mathbf c}) \, \binom{\ell - 1}{c_1, c_\ell} (1)^{c_1}(-2)^{c_\ell}  \\
 =\; \sum_{c_\ell = 0}^{\ell-1} (-1)^{|I_{\mathbf c}|} \, {\rm des}_\ell(I_{\mathbf c}) \, \binom{\ell - 1}{c_\ell} (-2)^{c_\ell}
  \end{multline*}
  where $\mathbf{c} = (\ell - 1 - c_\ell, 0, \ldots, 0, c_\ell)$.
  This choice of $\mathbf c$ corresponds to the path with $\ell - 1 - c_\ell$ vertical steps in the $0$th column, $c_\ell$ vertical steps in the $(\ell - 1)$st column, and $0$ vertical steps in the other columns:
\begin{equation*}
  \begin{tikzpicture}[scale = 0.35, transform shape]
    \draw (0,0) -- (10,10);
    \draw[thick] (0,5) -- (10, 5);
    \draw[thick] (0,0) -- (0,5);
    \draw[thick] (10,5) -- (10,10);
    \node at (0,0) [circle,fill,inner sep=2pt]{};
    \node at (0,5) [circle,fill,inner sep=2pt]{};
    \node at (0,1) [circle,fill,inner sep=2pt]{};
    \node at (1,1) [circle,fill,inner sep=2pt]{};
    \node at (0,4) [circle,fill,inner sep=2pt]{};
    \node at (1,5) [circle,fill,inner sep=2pt]{};
    \node at (4,4) [circle,fill,inner sep=2pt]{};
    \node at (4,5) [circle,fill,inner sep=2pt]{};
    \node at (5,5) [circle,fill,inner sep=2pt]{};
    \node at (6,6) [circle,fill,inner sep=2pt]{};
    \node at (6,5) [circle,fill,inner sep=2pt]{};
    \node at (8,5) [circle,fill,inner sep=2pt]{};
    \node at (9,5) [circle,fill,inner sep=2pt]{};
    \node at (10,5) [circle,fill,inner sep=2pt]{};
    \node at (10,6) [circle,fill,inner sep=2pt]{};
    \node at (10,8) [circle,fill,inner sep=2pt]{};
    \node at (8,8) [circle,fill,inner sep=2pt]{};
    \node at (9,9) [circle,fill,inner sep=2pt]{};
    \node at (10,9) [circle,fill,inner sep=2pt]{};
    \node at (10,10) [circle,fill,inner sep=2pt]{};
    \node[left=7pt, font=\LARGE] at (10,7.25) {$\vdots$};
    \node[right=7pt, font=\LARGE] at (0,2.75) {$\vdots$};
    \node[below=7pt, right=7pt, font=\LARGE] at (2.5,2.5) {$\iddots$};
    \node[above=10pt, left=10pt, font=\LARGE] at (3.5,5) {$\dots$};
    \node[above=7pt, left=7pt, font=\LARGE] at (7.5,7.5) {$\iddots$};
    \node[above=10pt, left=10pt, font=\LARGE] at (8,5) {$\dots$};
    \draw[decorate,decoration={brace,amplitude=10pt,raise=0pt},yshift=0pt]
    (0,0) -- (0,5) node [black,midway,xshift=-3.5cm, font=\Huge] {$\ell - 1- c_\ell$};
    \draw[decorate,decoration={brace,amplitude=10pt, mirror, raise=0pt},yshift=0pt]
    (10,5) -- (10,10) node [black,midway,xshift=2cm, font=\Huge] {$c_\ell$};
    \node[above=2pt, left=2pt, font=\LARGE] at (5.5,5.5) {$\ell - c_\ell$};
    \node[above=2pt, left=2pt, font=\LARGE] at (6.5,6.5) {$\ell - c_\ell + 1$};
    \node[above=2pt, left=2pt, font=\LARGE] at (8.5,8.5) {$\ell - 2$};
    \node[above=2pt, left=2pt, font=\LARGE] at (9.5,9.5) {$\ell - 1$};    
\end{tikzpicture}
\end{equation*}
  We therefore have $I_\mathbf{c} = \{\ell - c_\ell, \ldots, \ell - 1\}$, so $|I_\mathbf{c}| = c_\ell$ and $$(\ell - 1)! \operatorname{Vol}(\Tilde{P}^\ell) = \sum_{c_\ell = 0}^{\ell-1} 2^{c_\ell} {\rm des}_\ell(I_{\mathbf c}) \binom{\ell - 1}{c_\ell}.$$
  Finally, we compute the number of permutations on $[\ell]$ which have descents precisely in the last $c_\ell$ entries.
  To construct such a permutation, we must place $\ell$ in the $\ell - c_\ell$ position to avoid creating an unwanted ascent or descent.
  Then any choice of $c_\ell$ elements of $[\ell-1]$ gives a permutation: place the $c_\ell$ elements in decreasing order to the right of $\ell$  and place the remaining numbers to the left of $\ell$ in increasing order.
  Thus, ${\rm des}_\ell(I_{\mathbf c}) = \binom{\ell - 1}{c_\ell}$ and we have
  \begin{equation*}
    \ell!  \operatorname{Vol}(\widetilde P^\ell) =
    (\ell - 1)! \operatorname{pVol}({P}^\ell)= 
    \sum_{k = 0}^{\ell - 1}
    2^k \binom{\ell - 1}{k}^2 = D_{\ell - 1}.\qedhere
  \end{equation*}
\end{proof}

\section{Connections to Algebraic Statistics}\label{sec:algstat}
We now explain how to interpret the system \eqref{eq:system}, equivalently \eqref{eq:system-nice}, in the context of algebraic statistics. This perspective plays a crucial role in the proof of Theorem~\ref{thm:generic-finiteness}.
Our first step is to realize \eqref{eq:system-nice} as the critical equations of a maximum likelihood estimation problem on a scaled toric variety.

\subsection{Maximum likelihood estimation and scaled toric varieties}
Given data ${\bf u} \in \mathds{C}^r$, the \emph{maximum likelihood estimation} problem on a complex projective variety $V \subset  \mathds P_{\CC}^{r-1}$ is
\begin{equation}\label{eq:optimization}
  \textrm{maximize} \quad \sum_{i=1}^r u_{i} \log(p_i) - u_+ \log(p_+) \quad \textrm{ subject to } \quad (p_1 : \cdots : p_r) \in V,
\end{equation}
where $u_+ \coloneqq u_1 + \cdots + u_r$ and $p_+ \coloneqq p_1 + \cdots + p_r$. 
Because the derivatives of the objective function in \eqref{eq:optimization} are algebraic functions, one can study the set of critical points of \eqref{eq:optimization} using algebraic geometry; this is the perspective taken by algebraic statistics; see \cite[Chap.~7]{sullivant}.
For generic data $\bf u$, the optimization problem \eqref{eq:optimization} has finitely many critical points
and the number of critical points is independent of the choice of $\bf u$.
This number is called the \emph{maximum likelihood (ML) degree} of the variety; see \cite{CHKS06}.

A vector ${\bf c} \in (\mathds{C}^*)^r$ and a matrix $A=({\bf a}_1 \, \cdots \, {\bf a}_r) \in \mathds{Z}^{\ell \times r}$ define a \emph{scaled toric variety} $V_{A, \bf c} \subset \mathds{P}_\CC^{r-1}$ as the Zariski closure of the image of the scaled monomial map
\begin{equation}\label{eq:parametrization}
  (\CC^*)^\ell \to \mathds{P}_\CC^{r-1}, \qquad {\bf x} \mapsto (c_1{\bf x}^{{\bf a}_1}: \cdots: c_r{\bf x}^{{\bf a}_r}).
\end{equation}
The ML degree of scaled toric varieties was studied in \cite{ABB+}.
By \cite[Prop.~6]{ABB+}, a point $\bf p$ is a critical point of \eqref{eq:optimization} if it satisfies the following \emph{critical equations}:
\begin{equation}\label{eq:crit-eqns}
\phantom{ \qquad  {\bf p} \in V_{A, {\bf L}}.}
  u_+ \cdot A\cdot {\bf p} = p_+ \cdot A\cdot {\bf u}, \qquad  {\bf p} \in V_{A, {\bf c}}.
\end{equation}

\subsection{Reinterpreting the homogeneous Einstein equations}\label{sec:reinterpret}
We now write \eqref{eq:system-nice} in the form \eqref{eq:crit-eqns}.
We begin by identifying a matrix $A$ and a vector $\bf L$ such that \eqref{eq:system-nice} factors as the matrix equation
\begin{equation}\label{eq:system-matrix}
  A \cdot \operatorname{diag}({\bf L}) \cdot {\bf x}^A = 4{\bf d}.
\end{equation}

\begin{example}\label{ex:Asmalll}
  For $\ell = 2$, the system \eqref{eq:system-nice} can be written in the form \eqref{eq:system-matrix} as follows:
  \begin{equation*}
    \begin{bmatrix}
      -2 & 1 & -1 & 0\\
      1 & -2 & 0 & -1
    \end{bmatrix}
    \begin{bmatrix}
      L_{112}\\
      & L_{122} \\
      & & L_{111}'\\
      & & & L_{222}'
    \end{bmatrix}
    \begin{bmatrix}
      x_2/x_1^2\\
      x_1/x_2^2\\
      1/x_1\\
      1/x_2
    \end{bmatrix}
    = 
    \begin{bmatrix}
      4d_1 \\ 4d_2
    \end{bmatrix}.
  \end{equation*}
  For $\ell = 3$, the system \eqref{eq:system-nice} can be written in the form  \eqref{eq:system-matrix} with
  \setcounter{MaxMatrixCols}{20}
\[\begingroup\BigColSep
  A =
  \begin{bmatrix}
    -2 & 1 & -2 & 1 & 0 & 0 & 1 & -1 & -1 & -1 & 0 & 0 \\
    1 & -2 & 0 & 0 & -2 & 1 & -1 & 1 & -1 & 0 & -1 & 0 \\
    0 & 0 & 1 & -2 & 1 & -2 & -1 & -1 & 1 & 0 & 0 & -1
  \end{bmatrix},
    \endgroup
\]
  \[{\bf L} =
  \begin{bmatrix}
    L_{112} & L_{122} & L_{113} & L_{133} & L_{223} & L_{233}
    & 2L_{123} & 2L_{123} & 2L_{123} & L_{111}' & L_{222}' & L_{333}'
  \end{bmatrix}^T,
\]
\[\begingroup\BigColSep
  {\bf x}^A =
  \begin{bmatrix}
    \frac{x_2}{x_1^2} & 
    \frac{x_1}{x_2^2} &
    \frac{x_3}{x_1^2} &
    \frac{x_1}{x_3^2} &
    \frac{x_3}{x_2^2} &
    \frac{x_2}{x_3^2} &
    \frac{x_1}{x_2x_3} &
    \frac{x_2}{x_1x_3} &
    \frac{x_3}{x_1x_2} &
    \frac{1}{x_1} &
    \frac{1}{x_2} &
    \frac{1}{x_3}
  \end{bmatrix}^T.\;\;
  \endgroup
\]
\end{example}

For general $\ell\geq3$, let $r = 2\binom{\ell}{2} + 3 \binom{\ell}{3} + \ell$. 
Define $A\in \ZZ^{\ell\times r}$ as the $\ell \times r$ matrix whose first $2\binom{\ell}{2}$ columns are ${\bf e}_i - 2{\bf e}_k$ for $i \neq k \in [\ell]$, whose next $3 \binom{\ell}{3}$ columns are ${\bf e}_i - {\bf e}_j - {\bf e}_k$ for $i,j,k \in [\ell]$ distinct, and whose last $\ell$ columns are $-{\bf e}_i$ for $i \in [\ell]$. Let ${\bf L}\in (\mathds{C}^*)^r$ be the vector whose first $2 \binom{\ell}{2}$ entries are $L_{ikk}$ for $i \neq k \in [\ell]$, whose next $3 \binom{\ell}{3}$ entries are $2L_{ijk}$ for $i,j,k \in [\ell]$ distinct, and whose last $\ell$ entries are $L_{iii}'$ for $i \in [\ell]$.
In the above, the columns of $A$ and the entries of $\bf L$ have to be ordered accordingly, as in \Cref{ex:Asmalll}.

\begin{theorem}\label{thm:rewriteMLE}
  There exists a data vector ${\bf u} \in \mathds R^r$ such that \eqref{eq:system-nice} are the critical equations of \eqref{eq:optimization} on the scaled toric variety $V_{A, {\bf L}}$, where $A$ and $\bf L$ are as defined above. 
\end{theorem}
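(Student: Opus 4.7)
The plan is to exhibit an explicit data vector ${\bf u}\in\RR^r$ and verify that, after substituting the toric parametrization ${\bf p}=\operatorname{diag}({\bf L}){\bf x}^A$ of $V_{A,{\bf L}}$, the critical equations \eqref{eq:crit-eqns} coincide with \eqref{eq:system-matrix}, and hence \eqref{eq:system-nice}, up to the natural projective rescaling of ${\bf x}$.

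Since \eqref{eq:crit-eqns} reads $u_+\, A{\bf p}=p_+\, A{\bf u}$, the choice of ${\bf u}$ is dictated by matching $A{\bf u}$ to the right-hand side ${\bf d}$ of \eqref{eq:system-matrix}, with the constant $4$ to be absorbed by the rescaling of ${\bf x}$. Because the last $\ell$ columns of $A$ are precisely $-{\bf e}_1,\dots,-{\bf e}_\ell$, I would take ${\bf u}\in\RR^r$ with entry $-d_i$ in the position corresponding to the column $-{\bf e}_i$ and zero elsewhere; then $A{\bf u}={\bf d}$ and $u_+=-n$, where $n=\sum_i d_i=\dim(\mathsf G/\mathsf H)$. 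Substituting the parametrization, the critical equations become
\[-n\cdot A\operatorname{diag}({\bf L})\,{\bf x}^A \;=\; p_+\cdot {\bf d},\qquad p_+={\bf L}^T{\bf x}^A.\]

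The key structural input, immediate from Lemma~\ref{lem:polytopes}, is that every column of $A$ has entries summing to $-1$, so ${\bf 1}^T A=-{\bf 1}^T$. Summing the $\ell$ components of the displayed system yields the tautology $(-n)(-p_+)=p_+(-n)$, which confirms that the critical system is effectively $(\ell-1)$-dimensional and scale-invariant under ${\bf x}\mapsto\alpha{\bf x}$ (which sends ${\bf x}^A\mapsto\alpha^{-1}{\bf x}^A$ and hence $p_+\mapsto\alpha^{-1}p_+$). Writing the equation in the scale-invariant form $A\operatorname{diag}({\bf L}){\bf x}^A=(p_+/u_+)\,{\bf d}$, the scalar $p_+/u_+$ can be normalized to any prescribed nonzero value by choice of $\alpha$; choosing it equal to $4$ recovers \eqref{eq:system-matrix} exactly. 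Conversely, contracting any solution of \eqref{eq:system-matrix} with ${\bf 1}^T$ forces $p_+=-4n=4u_+$, so $p_+/u_+=4$ and the critical equations are satisfied.

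I do not foresee a serious obstacle; the argument is essentially bookkeeping once the right ${\bf u}$ is identified. The one point deserving genuine care is that \eqref{eq:crit-eqns} is intrinsically projective while \eqref{eq:system-matrix} is affine, and the reconciliation rests on the uniform column-sum $-1$ of $A$. I would therefore present the rescaling step explicitly, so that the bijection between affine solutions of \eqref{eq:system-matrix} and projective critical points of \eqref{eq:optimization} on $V_{A,{\bf L}}$ is transparent.
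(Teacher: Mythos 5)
Your proposal is correct and follows essentially the same route as the paper: both identify a data vector ${\bf u}$ with $A{\bf u}$ proportional to ${\bf d}$ (possible because the columns of $A$ span $\RR^\ell$; you simply use the explicit last $\ell$ columns $-{\bf e}_i$) and then match the parametric critical equations \eqref{eq:crit-eqns} to \eqref{eq:system-matrix} up to an overall scalar. The only difference is bookkeeping: the paper takes $A{\bf u}=4{\bf d}$ and absorbs the normalization by rescaling ${\bf L}$ to ${\bf L}/u_+$, whereas you absorb it by rescaling ${\bf x}$ via the relation ${\bf 1}^TA=-{\bf 1}^T$ --- a projective-versus-affine point the paper leaves implicit and your write-up makes usefully explicit.
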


\begin{proof}
  Since the columns of $A$ span all of $\mathds R^\ell$, there exists a vector ${\bf u} \in \RR^{r}$ such that $4{\bf d} = A{\bf u}$. 
  It follows from \eqref{eq:system-matrix} that, if ${\bf \widetilde{L}} = \frac{1}{u_+} {\bf L}$, then 
  \begin{equation}\label{eq:ml_eqs}
    u_+ A \cdot \operatorname{diag}\big ({\bf \widetilde{L}} \big ) \cdot {\bf x}^A
    = A \cdot \operatorname{diag}\big ({\bf L} \big ) \cdot {\bf x}^A 
    = 4{\bf d}
     = A{\bf u}. 
  \end{equation}
  This is the parametric version of \eqref{eq:crit-eqns} with ${\bf c} = {\bf \widetilde{L}}$; namely, \eqref{eq:crit-eqns} is obtained from \eqref{eq:ml_eqs} by setting ${\bf p} = \operatorname{diag}({\bf \widetilde{L}}) \cdot {\bf x}^A$ and eliminating the variables $x_1, \ldots, x_\ell$.
  Therefore \eqref{eq:ml_eqs}, or equivalently \eqref{eq:system-nice}, are the (parametric) critical equations on $V_{A, {\bf \widetilde{L}}} = V_{A, {\bf L}}$. 
\end{proof}

\begin{remark}
  Birch's Theorem states that if $\bf L$ and $\bf u$ are both positive vectors, then \eqref{eq:crit-eqns} has at most one positive solution ${\bf p} \in V_{A, \bf L}$; see \cite[Thm.~9]{ABB+}. So, it is natural to ask whether this can be applied in the situation of \Cref{thm:rewriteMLE}, as it would prove that there is exactly one homogeneous Einstein metric on the corresponding homogeneous space. 
  However, the positive hull of the columns of $A$ does not intersect the positive orthant, so Birch's Theorem \emph{never} applies here, because the vector $\bf d$ has positive coordinates for all homogeneous spaces. This matches the geometric expectation that, if a compact homogeneous space admits homogeneous Einstein metrics, then they are usually not unique.
\end{remark}

So far, we have required that ${\bf L}$ lies in $(\mathds{C}^*)^r$. In practice, this is not a realistic assumption, as often some structure constants $L_{ijk}$ vanish. In that case, even though some entries of $\bf L$ are zero, \eqref{eq:system-nice} are still the critical equations on a scaled toric variety defined by removing the zero entries of $\bf L$ and the corresponding columns of $A$, provided $\bf d$ is still in the column span of~$A$. 
We use this to prove that \eqref{eq:system-matrix} is BKK generic for generic parameters $\bf L$ and $\bf d$.

\begin{proof}[Proof of Theorem~\ref{thm:generic-finiteness}]
  It suffices to prove that the BKK discriminant does not vanish identically, i.e., that there is some choice of parameters for which the system has $D_{\ell-1}$ solutions.
 In \eqref{eq:system-matrix}, replace $A$ with an $\ell \times 2\binom{\ell}{2}$ matrix $A'$ whose columns are ${\bf e}_i - 2{\bf e}_k$ and ${\bf L}$ with a vector ${\bf L}'$ of length $2\binom{\ell}{2}$ whose entries are $L_{ikk}$; this corresponds to choosing $L$ with $L_{ijk} = L_{iii}' = 0$ for all $i,j,k \in [\ell]$ distinct.
  For example, for $\ell = 3$, this system is
  \begin{equation*}
    \underbrace{\begin{bmatrix}
    -2 & 1 & -2 & 1 & 0 & 0 \\
    1 & -2 & 0 & 0 & -2 & 1 \\
    0 & 0 & 1 & -2 & 1 & -2 
    \end{bmatrix}}_{A'}\!
    \underbrace{
  \begin{bmatrix}
    L_{112}\\
    & L_{122}\\
    & & L_{113}\\
    & & & L_{133}\\
    & & & & L_{223}\\
    & & & & & L_{233}
  \end{bmatrix}}_{{\operatorname{diag}({\bf L}')}}\!
  \begin{bmatrix}
    {x_2}/{x_1^2}\\
    {x_1}/{x_2^2} \\
    {x_3}/{x_1^2} \\
    {x_1}/{x_3^2} \\
    {x_3}/{x_2^2} \\
    {x_2}/{x_3^2}
  \end{bmatrix}
  \!\!=\!\!
  \begin{bmatrix}
    4d_1 \\ 4d_2 \\ 4d_3
  \end{bmatrix},
  \end{equation*}
cf.~Example~\ref{ex:Asmalll}.
  Since $A'$ still has full rank, we can write this system in the form \eqref{eq:ml_eqs}.
  By assumption, the vectors $\bf L'$ and $\bf d$ are generic.
  Therefore, the number of solutions to $A' \cdot \operatorname{diag}({\bf L}') \cdot {\bf x}^{A'} = 4{\bf d}$ is equal to the ML degree of the scaled toric variety $V_{A', {\bf L}'}$. 
  Since $\bf L'$ and $\bf d$ are generic, by \cite[Cor.~8]{ABB+}, this ML degree is equal to the degree of the toric variety $V_{A', \bf 1}$. The degree of toric variety $V_{A', \bf 1}$ is the number of points of the intersection of $V_{A', \bf 1}$ with a linear subspace of $\mathds{P}^{2\binom{\ell}{2} - 1}_{\CC}$ of codimension $(\ell-1)$. By Theorem~\ref{thm:bkk} and the fact that ${\rm MV}(P, \ldots, P)$ is the normalized volume of $P$, this degree is the normalized volume of $\operatorname{conv}(A') = \operatorname{conv}(A)$, which is $D_{\ell-1}$ by Theorem~\ref{thm:centraldelannoy}.
\end{proof}

\subsection{Facial systems and the BKK discriminant}
Given a system of Laurent polynomials $\mathcal F = \{f_1, \ldots, f_\ell\}$,
the \emph{facial systems} of $\mathcal F$ are obtained by restricting the support of each polynomial $f_i ({\bf x})= \sum_{{\bf a'} \in \mathds{Z}^\ell \cap P_i} c_{i,\bf a'} \bf x^{a'}$ to a proper face of its Newton polytope $P_i$; see Section~\ref{subsec:bkk_bound}. Namely, if ${\bf a} \in \RR^\ell$, the facial system $\mathcal F^{\bf a}$ consists of the polynomials $f_{i,{\bf a}} ({\bf x}) = \sum_{{\bf a}' \in \ZZ^\ell \cap F_{\bf a}(P_i)} c_{i,\bf a'} {\bf x}^{\bf a'}$, where $F_{\bf a}(P_i)$ is the face of $P_i$ given by \eqref{eq:face}. The polynomial $f_{i,{\bf a}}$ is called the \emph{restriction of $f_i$ to $F_{\bf a}(P_i)$}. 
Recall that the parameter locus where the BKK bound of $\mathcal F$ is not achieved, i.e., where $\mathcal F$ is not BKK generic, is called the BKK discriminant of $\mathcal F$.
The following theorem describes it in terms of facial systems:

\begin{theorem}[Bernstein~{\cite[Thm.~B]{bernstein}}]\label{thm:other}
  Suppose that for all ${\bf a} \neq {\bf 0} \in \RR^\ell$, the facial system $\mathcal F^{\bf a}$ has no roots in $(\CC^*)^{\ell}$. Then all the roots of the system $\mathcal F$ are isolated and the number of solutions to the system $\mathcal F$ is equal to the mixed volume ${\rm MV}(P_1, \ldots, P_\ell)$.
\end{theorem}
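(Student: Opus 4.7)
The plan is to prove Bernstein's Other Theorem by polyhedral homotopy / deformation, using the first Bernstein theorem (\Cref{thm:bkk}) as a black box. I would construct a one-parameter family of systems connecting $\mathcal F$ to a BKK-generic system, and show that, under the facial-system hypothesis, no solutions are lost to the boundary of $(\CC^*)^\ell$.

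First, I would perturb the coefficients: for each $f_i({\bf x}) = \sum_{{\bf a}\in\mathrm{supp}(f_i)} c_{i,{\bf a}}\,{\bf x}^{\bf a}$ and each $t$ in a small punctured disk, replace $c_{i,{\bf a}}$ by $c_{i,{\bf a}}+t\,\tilde c_{i,{\bf a}}$ with $\tilde c_{i,{\bf a}}$ chosen generically so that the family $\mathcal F_t$ keeps the Newton polytopes $P_i$ and so that $\mathcal F_t$ is BKK generic for all but finitely many $t$. By \Cref{thm:bkk}, for such generic $t$, the system $\mathcal F_t$ has exactly $\mathrm{MV}(P_1,\dots,P_\ell)$ isolated solutions in $(\CC^*)^\ell$, counted with multiplicity. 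Consider the incidence variety
\begin{equation*}
  Z \;=\; \{\,({\bf x},t)\in (\CC^*)^\ell\times \CC \;:\; \mathcal F_t({\bf x})=0\,\},
\end{equation*}
and the projection $\pi\colon Z\to \CC$. The number of solutions counted with multiplicity is upper semicontinuous in $t$ (via the usual resultant/Fitting-ideal argument), so on the parameter value $t=0$ we either recover $\mathrm{MV}(P_1,\dots,P_\ell)$ isolated solutions, or some solutions escape to the boundary $(\CC^*)^\ell\setminus (\CC^*)^\ell$ (a coordinate $\to 0$ or $\infty$), or we acquire a positive-dimensional component of solutions. I would show that the facial-system hypothesis rules out both bad behaviors.

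The heart of the argument is a Puiseux-series / tropical analysis. If a branch ${\bf x}(t)$ of solutions of $\mathcal F_t = 0$ diverges to the boundary as $t\to 0$, one can expand each coordinate as a Puiseux series $x_j(t) = t^{-a_j}(y_j^{(0)} + O(t^{1/N}))$ for some ${\bf a} = (a_1,\dots,a_\ell)\in\QQ^\ell\setminus\{{\bf 0}\}$ and ${\bf y}^{(0)}\in (\CC^*)^\ell$. Substituting into $f_i$ and grouping by powers of $t$, the leading (lowest-order) terms are exactly those monomials ${\bf x}^{\bf a'}$ for which $\langle -{\bf a},{\bf a'}\rangle$ is minimal, i.e., those ${\bf a'}\in F_{\bf a}(P_i)$. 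The vanishing of these leading terms forces ${\bf y}^{(0)}$ to be a solution in $(\CC^*)^\ell$ of the facial system $\mathcal F^{\bf a}$ (applied with a possibly shifted direction $-{\bf a}$, but faces of $P_i$ in the directions $\pm{\bf a}$ both appear as ${\bf a}$ ranges over $\RR^\ell\setminus\{{\bf 0}\}$). This contradicts the hypothesis, so no branch can run to the boundary. By a similar argument, a positive-dimensional component of solutions in $(\CC^*)^\ell$ would likewise yield (after taking a generic curve in the component and applying the same Puiseux analysis) a solution of some facial system in $(\CC^*)^\ell$, again contradicting the hypothesis. Therefore $\pi^{-1}(0) = \mathcal F^{-1}(0)$ consists of isolated points and the fiber count is preserved, giving $\#\mathcal F^{-1}(0) = \mathrm{MV}(P_1,\dots,P_\ell)$ with multiplicity.

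The main obstacle is the rigorous handling of the Puiseux expansion: one must work over the field of Puiseux series $\CC\{\{t\}\}$, verify that every irreducible branch of $Z$ lifts to such an expansion (this is the standard Newton--Puiseux theorem applied to a suitable one-parameter family, e.g., after specializing $t$ to a local parameter on a desingularized curve in $Z$), and carefully relate the valuation vector $\bf a$ arising from the expansion to the inner normal defining the face $F_{\bf a}(P_i)$. Once the dictionary between valuations of Puiseux roots and supporting faces is set up correctly, matching the leading coefficients of $f_i({\bf x}(t))$ to $f_{i,{\bf a}}({\bf y}^{(0)})$ is essentially bookkeeping. An alternative, which I would mention as a cross-check, is to work inside the toric compactification $X_\Sigma$ of $(\CC^*)^\ell$ associated with a common refinement $\Sigma$ of the normal fans of $P_1,\dots,P_\ell$: the facial systems describe the intersections of the closures of the hypersurfaces $\{f_i=0\}$ with the torus orbits of $X_\Sigma$, and the hypothesis says these intersections are empty, so all intersections occur in the dense torus and equal the intersection number, which by toric intersection theory is $\mathrm{MV}(P_1,\dots,P_\ell)$.
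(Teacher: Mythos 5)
This statement is quoted from Bernstein's 1975 paper and is used as a black box; the paper offers no proof of it, so there is nothing internal to compare against. Your sketch reproduces what is essentially the standard (indeed, Bernstein's original) argument, and it is correct in outline: deform to a BKK-generic system with the same supports, expand any branch of solutions over the punctured disk in Puiseux series, and observe that a branch escaping the torus has a nonzero valuation vector $\mathbf a$ whose leading coefficients must solve the facial system $\mathcal F^{-\mathbf a}$ (you correctly flag the sign discrepancy with the convention \eqref{eq:face}, which is harmless since the hypothesis quantifies over all nonzero directions); the same analysis applied to a curve inside a putative positive-dimensional component of $\mathcal F^{-1}(0)$, approaching the boundary of a toric compactification, rules that case out as well. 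Two points deserve more care than your write-up gives them. First, ``upper semicontinuity of the solution count'' is not by itself the mechanism: what you actually need is the local conservation of number (each isolated zero of $\mathcal F_0$ of multiplicity $\mu$ absorbs exactly $\mu$ nearby zeros of $\mathcal F_t$), combined with your two exclusion arguments, to conclude that all $\mathrm{MV}(P_1,\dots,P_\ell)$ branches land on isolated torus zeros of $\mathcal F_0$ and that every isolated zero is reached. Second, in the leading-coefficient computation one should note explicitly that the perturbation terms $t\,\tilde c_{i,\mathbf a'}$ enter at strictly higher order in $t$ than the unperturbed terms on the face, so they do not contaminate the facial equation; this is automatic because the support is fixed and the original coefficients on the support are nonzero. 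With those points made precise, your argument is complete; your toric-compactification cross-check is the other standard proof and is equally valid.
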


Any system $\mathcal F$ of Laurent polynomials has only finitely many facial systems, so one only needs to check finitely many conditions to apply Theorem~\ref{thm:other}. 

Consider the system \eqref{eq:system-nice} and the face $P^\ell$ of the polytope $\widetilde{P}^\ell$ in Lemma~\ref{lem:polytopes}. The corresponding facial system is $r_1^\ell ({\bf x})= \cdots = r_\ell^\ell({\bf x}) = 0$, where $r_i^\ell$ are the Laurent polynomials in \eqref{eq:Ricci-entries}. 
Note that these are multiples of the toric derivatives of $\operatorname{scal}$, namely,
\begin{equation}\label{eq:ri-toricder}
  -d_i \, r_i^\ell({\bf x}) = x_i \frac{\partial}{\partial x_i} \operatorname{scal}({\bf x}),
\end{equation}
and recall that $\operatorname{scal}$ is homogeneous; see \eqref{eq:scalar}. The set of parameters where the toric derivatives of a homogeneous function have a common solution is the principal $A$-determinant. In general, 
the \emph{principal $A$-determinant} \cite[Chap.~10]{GKZ} of a homogeneous polynomial $f ({\bf x})= \sum_{i=1}^r c_i {\bf x}^{{\bf a}_i}$, where ${\bf c}\in \CC^r$ and $A = ({\bf a}_1 \cdots {\bf a}_r)\in\mathds Z^{\ell \times r}$, is defined as the~$A$-resultant
  \begin{equation}\label{eq:Adet}
    E_A(f) = R_A \left (x_1 \frac{\partial f}{\partial x_1}, \,\dots,\, x_\ell \frac{\partial f}{\partial x_\ell} \right ).
  \end{equation}
By \cite[Thm.~10.1.2]{GKZ}, the principal $A$-determinant of such a polynomial $f$ factors as
\begin{equation}\label{eq:Adet-factors}
  E_A(f) = \prod_{F \textrm{ face of }{\rm Newt}(f)} (\Delta_{F \cap A})^{\alpha_F},
\end{equation}
where $\alpha_F \in \mathds N$ and $\Delta_{F \cap A}$ is an $A$-discriminant.
Namely, if the variety 
\begin{equation*}
  \nabla_{F \cap A} = \overline{\left \{{\bf c} \in \mathds{C}^r : \textrm{there exists ${\bf x} \in (\CC^*)^\ell$ such that $\frac{\partial f_F}{\partial x_i}({\bf x}) = 0$ for $i \in [\ell]$}\right \}}
\end{equation*}
has codimension 1, then the \emph{$A$-discriminant} $\Delta_{F \cap A}$ is its defining polynomial.
If $\nabla_{F \cap A}$ has higher codimension, then $\Delta_{F \cap A}=1$. 
Here $f_F$ denotes the restriction of $f$ to the face $F$. 

We now prove that the zero set of \eqref{eq:bkkdiscriminant} contains the BKK discriminant of \eqref{eq:system-nice}.

\begin{proof}[Proof of Theorem~\ref{thm:bkkdiscriminant}]
  We show that the system \eqref{eq:system-nice}, henceforth denoted $\mathcal F$, satisfies the hypotheses of \Cref{thm:other} and is hence BKK generic, provided that \eqref{eq:bkkdiscriminant} does not vanish.
  We begin by fixing a vector ${\bf a} \in \mathds R^\ell$ and letting $S,T$ be as in \eqref{eq:STfaces}.
  Observe that if $\mathcal F^{\bf a}$ has a solution, then its subsystem $\mathcal F^{S,T} \coloneqq \{f_{i, \bf a}({\bf x}) = 0\}_{i \in S \cup T}$
  also has a solution.
  We will prove that if $\mathcal F^{S,T}$ has a solution, then \eqref{eq:bkkdiscriminant} vanishes on $\bf L$ and $\bf d$.
  Recall from Lemma~\ref{lem:Pfaces} that the proper faces of $P^\ell$ are of the form $F_{S,T}= \operatorname{conv}({\bf e}_k-2{\bf e}_j : k\in S,\, j\in T)$ and that the proper faces of $\widetilde P^\ell$ are $\{{\bf 0}\}$, $P^\ell$, and $\widetilde F_{S,T} = {\rm conv}({\bf 0}, F_{S,T})$, for nonempty and disjoint $S,T\subset [\ell]$.

  First, if $a_k - 2a_j > 0$ for $k \in S$ and $j \in T$, then the corresponding subsystem is $\mathcal F^{S,T} = \{d_i=0\}_{i \in S \cup T}$. Thus, if $\mathcal F^{\bf a}$ has a root, then \eqref{eq:bkkdiscriminant} vanishes.   

  Next, if $a_k - 2a_j = 0$ for $k \in S$ and $j \in T$, then $F_{\bf a}(\widetilde P^\ell_k) = {\rm conv}({\bf 0}, F_{\bf a}(P^\ell_k)) = \widetilde F_{S,T} \cap \widetilde P^\ell_k$ for all $k \in S \cup T$ by Lemma~\ref{lem:Pifaces}.
  Hence, the subsystem $\mathcal F^{S,T}$ of $\mathcal F^{\bf a}$ is
  \begin{equation}\label{eq:FST}
    -4d_i f_{i,\bf a}^\ell({\bf x}) =
    \begin{cases}
      \sum_{k \in T} \frac{-L_{ikk}x_i}{x_k^2} + \sum_{j \neq k \in T} 2L_{ijk}
    \left (\frac{x_k}{x_ix_j} + \frac{x_j}{x_ix_k} - \frac{x_i}{x_jx_k} \right ) + 4d_i
    & \!\!\textrm{if $i \in S$}, \\
      \sum_{k \in S} \frac{2L_{iik}x_k}{x_i^2} + \sum_{j \neq k \in T} 2L_{ijk}
    \left (\frac{x_k}{x_ix_j} + \frac{x_j}{x_ix_k} - \frac{x_i}{x_jx_k} \right ) + 4d_i  
    & \!\! \textrm{if $i \in T$}. \\
    \end{cases}
  \end{equation}
  If \eqref{eq:FST} has a root, then 
    $\sum_{i \in T} -d_i f^\ell_{i,{\bf a}}({\bf x}) + 2  \, \sum_{j \in S} -d_j f^\ell_{j,{\bf a}}({\bf x}) =
    \sum_{i \in T} d_i + 2\sum_{j \in S} d_j$ vanishes, and hence so does \eqref{eq:bkkdiscriminant}.
  
  Finally, if $a_k - 2a_j < 0$ for $k \in S$ and $j \in T$, then $F_{\bf a}(\widetilde P^\ell_k) = F_{S,T} \cap P^\ell_k$ for all $k \in S \cup T$, by Lemma~\ref{lem:Pifaces}.
  In this case, the corresponding facial system $\mathcal F^{S,T}$ is
  \begin{equation}\label{eq:RST}
    -4 d_i r_{i,\bf a}^\ell({\bf x}) =
    \begin{cases}
      \sum_{k \in T} \frac{-L_{ikk}x_i}{x_k^2} + \sum_{j \neq k \in T} 2L_{ijk}
    \left (\frac{x_k}{x_ix_j} + \frac{x_j}{x_ix_k} - \frac{x_i}{x_jx_k} \right )
    & \textrm{if $i \in S$}, \\
      \sum_{k \in S} \frac{2L_{iik}x_k}{x_i^2} + \sum_{j \neq k \in T} 2L_{ijk}
    \left (\frac{x_k}{x_ix_j} + \frac{x_j}{x_ix_k} - \frac{x_i}{x_jx_k} \right )
    & \textrm{if $i \in T$},
    \end{cases}
  \end{equation}
  unless ${\bf a} = \bf 1$, in which case, $\mathcal F^{S,T} = \{r_{i}^\ell({\bf x}) = 0\}$. 
  We now remark that the restriction of $r^\ell_i$ to the face $F_{S,T} \cap  P_i^\ell = \emptyset$ is identically zero if $i \not\in S \cup T$. 
  Therefore, by \eqref{eq:ri-toricder}, the system \eqref{eq:RST} is precisely the system $-d_i r^\ell_{i,\bf a}({\bf x}) =  x_i\frac{\partial}{\partial x_i}\operatorname{scal}_{{\bf a}}({\bf x})=0$ for $i \in [\ell]$, where $\operatorname{scal}_{ {\bf a}}$ denotes the restriction of $\operatorname{scal}$ to the face $F_{S,T}$. 
  Thus, if \eqref{eq:RST} has a solution in $(\CC^*)^\ell$, then the coefficient vector $\bf L$ of $-4\operatorname{scal}$ lies in $\nabla_{A\cap F_{S,T}}$, and so $E_A(\operatorname{scal})$ and \eqref{eq:bkkdiscriminant}~vanish~on~$\bf L$~by~\eqref{eq:Adet-factors}.

  We have shown that if any facial system $\mathcal F^{\bf a}$ has a root in $(\CC^*)^\ell$, then \eqref{eq:bkkdiscriminant} vanishes on $\bf L$ and $\bf d$. The conclusion now follows from \Cref{thm:other}.
\end{proof}

The polytope $P^\ell$ is simple if and only if $\ell = 2,3$, by Lemma~\ref{lem:Pfaces}.
Therefore $P^2$ and $P^3$ define smooth toric varieties by \cite[Cor.~3.10]{PVL}.
For $\ell = 2$, the polynomial \eqref{eq:bkkdiscriminant} is equal to the discriminant in Proposition~\ref{prop:l=2bkk}. 
For $\ell = 3$, we compute the principal $A$-determinant $E_A(\operatorname{scal})$ applying \eqref{eq:Adet-factors} to the polygon $P^3$ in Figure~\ref{fig:newton}, obtaining
\begin{equation}\label{eq:l=3-gen-disc}
  E_A(\operatorname{scal} ) = 
  L_{112} L_{122} L_{113} L_{133} L_{223} L_{233}
  \begin{vmatrix}
    L_{123} & L_{133}\\
    L_{122} & L_{123}
  \end{vmatrix}
  \begin{vmatrix}
    L_{123} & L_{233}\\
    L_{112} & L_{123}
  \end{vmatrix}
  \begin{vmatrix}
    L_{123} & L_{113}\\
    L_{223} & L_{123}
  \end{vmatrix}
  \Delta_A,
\end{equation}
where $\Delta_A$ is the $A$-discriminant for the matrix $A\in \mathds Z^{3\times 12}$ evaluated at the vector ${\bf L}$ in Example~\ref{ex:Asmalll};
it can be computed explicitly using \cite[Thm.~2]{khetan} by evaluating the determinant of a $21 \times 21$ matrix.
The degree-one factors in \eqref{eq:l=3-gen-disc} are the $A$-discriminants of the vertices of $P^3$. The $2 \times 2$ determinants are the $A$-discriminants of edges $F_{S,T}$ of $P^3$ with $\#S = 2$ and $\#T = 1$, i.e., the short edges; see \Cref{ex:short-long}. The long edges, namely $F_{S,T}$ with $\#S = 1$ and $\#T = 2$, have $A$-discriminants with codimension $2$, so they each contribute a factor of $1$. Finally, the factor $\Delta_A$ is the $A$-discriminant of the $2$-dimensional face of $P^3$. 

\subsection{Finiteness}
Using Theorem~\ref{thm:bkkdiscriminant}, we confirm the Finiteness Conjecture in special cases.

\begin{proof}[Proof of Corollary~\ref{cor:fin}]
Let $\G/\mathsf H$ be a compact homogeneous space and $\mathfrak m=\mathfrak m_1\oplus\dots \mathfrak m_\ell$ a $Q$-orthogonal decomposition into pairwise inequivalent $\mathrm{Ad}_{\sf H}$-irreducible representations, with associated parameters $\bf b$, $\bf d$, and $L$, as in Section~\ref{sec:diffgeom}. 
If the principal $A$-determinant $E_A(\operatorname{scal} )$ does not vanish on $\bf b$, $\bf d$, $L$, then \eqref{eq:bkkdiscriminant} does not vanish, since $d_i>0$ for all $i\in[\ell]$. Thus, by Theorem~\ref{thm:bkkdiscriminant}, the system \eqref{eq:system} with these parameters is BKK generic, hence it has exactly $D_{\ell-1}$ solutions in $(\CC^*)^\ell$ by Theorems~\ref{thm:bkk} and \ref{thm:other}. Among those, the solutions that lie in $\RR^\ell_+$ are in bijective correspondence with the $\G$-invariant Einstein metrics on $\sf G/H$.
\end{proof}

As seen above, the BKK discriminant gives conditions under which a system has finitely many solutions.
However, being BKK generic is stronger than having finitely many solutions, cf.~Propositions~\ref{prop:l=2bkk} and \ref{prop:l=2finite}.
This distinction is relevant because there are compact homogeneous spaces with pairwise inequivalent irreducible summands whose homogeneous Einstein equations are \emph{not} BKK generic; see \Cref{sec:numerics} for examples.
Thus, attempting to establish BKK genericity is not a viable path to prove the Finiteness Conjecture.

Recall that, for $\ell = 2$, positivity of $\bf d$ is a sufficient condition for finiteness (Proposition~\ref{prop:l=2finite}).
We ask if this holds for $\ell\geq3$; an affirmative answer would imply the Finitness Conjecture.

\begin{question}
  Does \eqref{eq:system-nice} have finitely many solutions in $(\CC^*)^\ell$ if the entries of $\bf d$ are positive?
\end{question}

\subsection{Sharpness}
Theorem~\ref{thm:generic-finiteness} states that the BKK bound is achieved for generic parameters.
We now ask if this bound is achieved in practice. 
For $\ell = 2$, there are infinitely many examples of homogeneous spaces where the BKK bound is achieved. 

\begin{example}\label{ex:somn}
Recall that the outer tensor product of the defining representations of $\SO(m)$ and~$\SO(n)$ is the $\SO(m)\times \SO(n)$-representation on $\RR^m\otimes\RR^n\cong \RR^{mn}$ given by
\[ (A,B) \cdot (v\otimes w) = Av\otimes Bw \quad \text{ for all } (A,B)\in \SO(m)\times \SO(n), \; v\in\RR^m, w\in\RR^n. \]
This defines an injective homomorphism $\SO(m)\times \SO(n)\to \SO(mn)$ whose image is a maximal subgroup $\mathsf H$. Consider the homogeneous space $\G/\mathsf H=\SO(mn)/\SO(m)\SO(n)$ for $m,n\geq3$, $(m,n)\neq(4,4)$, cf.~\cite[Sec.~6, {\bf V}.1]{dk2}.
Fix the bi-invariant metric $Q(X,Y)=-\frac12\operatorname{tr}XY$ on $\mathfrak g=\mathfrak{so}(mn)$.
Using the standard identification $\mathfrak{so}(k)\cong\wedge^2\RR^k$, and the decomposition
\begin{equation*}
    \wedge^2(V\otimes W)=\wedge^2 V\oplus\wedge^2W\oplus (\operatorname{Sym}^2_0 V\otimes \wedge^2 W)\oplus (\wedge^2 V\otimes \operatorname{Sym}^2_0W),
\end{equation*}
we find the $Q$-orthogonal splitting $\mathfrak g=\mathfrak h\oplus\mathfrak m_1\oplus \mathfrak m_2$, where
\begin{equation*}
\mathfrak m_1 \cong \operatorname{Sym}^2_0\RR^m\otimes \wedge^2\RR^n, \qquad
\mathfrak m_2 \cong \wedge^2\RR^m\otimes \operatorname{Sym}^2_0\RR^n,
\end{equation*}
are inequivalent irreducible $\mathrm{Ad}_{\mathsf H}$-representations.
Thus, $\ell=2$ and the $d_i=\dim \mathfrak m_i$ are
\begin{equation*}
d_1 = \tfrac{(m + 2) (m - 1)}{2}\tfrac{n (n- 1)}{2},\qquad d_2 = \tfrac{m (m- 1)}{2} \tfrac{(n + 2) (n - 1)}{2}.
\end{equation*}
Using some representation theory, one computes ${\bf b}=(2 (m n-2)){\bf 1}$, as well as
\begin{equation}\label{eq:Ls-sonm}
\begin{aligned}
    L_{111}&= L_{222}=\tfrac{1}{8 m}\,(m-2) \,(m-1)\, (m+2)\, (m+4)\, n \,(n-2) \,(n-1),\\
    L_{112}&= L_{122}=\tfrac{1}{8} \,(m-1) \,m\, (m+2)\, (n-2)\, (n-1)\, (n+2).
\end{aligned}
\end{equation}
With the above values, one has $(2d_1 + d_2)\, (d_1 + 2d_2) \, R(r_1^2, r_2^2)>0$ for all $m,n\geq3$, so, by \Cref{prop:l=2bkk}, the system \eqref{eq:system-nice} has $D_{1}=3$ solutions in $(\CC^*)^2$, counted with multiplicity.
\end{example}

We believe that, for all $\ell\geq3$, there exist compact homogeneous spaces $\G/\mathsf H$ whose isotropy representation has a $Q$-orthogonal decomposition \eqref{eq:splitting-m} with $\ell$ pairwise inequivalent summands, such that the corresponding parameters lie outside the BKK discriminant.
On such homogeneous spaces, the BKK bound in Theorem~\ref{thm:delannoy} is achieved, and the Finiteness Conjecture holds.
However, we do not know how to produce explicit examples with $\ell\geq 3$, given the difficulty of computing structure constants $L_{ijk}$ in larger examples.

The mixed volume of \eqref{eq:system-nice} drops if some structure constant $L_{iik}$, $i\neq k\in [\ell]$, vanishes, in which case the number of isolated solutions to \eqref{eq:system-nice} is strictly less than $D_{\ell-1}$; see \Cref{sec:applications} for examples. In other words, BKK genericity requires that $L_{iik}$ be nonzero for all $i\neq k\in [\ell]$. Note that if $\mathsf H\subset \G$ is a maximal subgroup, as in \Cref{ex:somn}, then for all $i\in [\ell]$ there exists $k \in [\ell] \backslash \{i\}$ such that $L_{iik}>0$. Thus, if $\ell=2$, we have $L_{112}, L_{122}>0$, but, for $\ell\geq3$, maximality of $\mathsf H\subset \G$ no longer ensures that \emph{all} $L_{iik}>0$. This leads us to our last question:

\begin{question}
Construct examples of compact homogeneous spaces $\G/\mathsf H$ whose isotropy representation has a $Q$-orthogonal decomposition \eqref{eq:splitting-m} with $\ell\geq3$ pairwise inequivalent summands such that $L_{iik}>0$ for all $i\neq k\in [\ell]$. 
Check if the systems \eqref{eq:system-nice} corresponding to these examples achieve the BKK bound $D_{\ell-1}$ 
using Corollary~\ref{cor:fin}.
\end{question}

\section{The Finiteness Conjecture for generalized Wallach spaces}\label{sec:applications}
Generalized Wallach spaces are compact homogeneous spaces $\G/\mathsf H$ whose isotropy representation $\mathfrak m=\mathfrak m_1\oplus \mathfrak m_2\oplus \mathfrak m_3$ splits into $\ell=3$ pairwise orthogonal $\mathrm{Ad}_{\mathsf H}$-irreducible summands with $[\mathfrak m_i,\mathfrak m_i]\subset\mathfrak h$ for all $i \in [3]$.
Thus, the structure constants of generalized Wallach spaces satisfy $L_{iii} = L_{iik}=0$ for all $i, k \in [3]$.
These spaces are natural generalizations of the so-called Wallach flag manifolds
\[ \mathsf{SO}(3)/\ZZ_2\oplus\ZZ_2, \quad \mathsf{SU(3)}/\mathsf T^2, \quad \mathsf{Sp}(3)/\mathsf{Sp}(1)^3, \quad \mathsf F_4/\mathsf{Spin}(8), \]
that is, the manifolds of complete flags in $\RR^3$, $\CC^3$, $\HH^3$, and $\mathds{C}\mathrm{a}^3$. Generalized Wallach spaces were classified by Nikonorov~\cite{niko-class1,niko-class2}, and independently in \cite{class3} for $\G$ simple. 

The homogeneous Einstein equations simplify on generalized Wallach spaces.
Consider the system \eqref{eq:system-nice} with $\ell = 3$, as well as $L_{iik} = 0$ and $L'_{iii} =  - 2b_id_i$, for all $i \neq k \in [3]$, that is:
\begin{equation}\label{eq:l=3nice}
\begin{aligned}
    \frac{L_{111}'}{x_1} + 2L_{123} \left(\frac{x_2}{x_1x_3} + \frac{x_3}{x_1x_2} - \frac{x_1}{x_2x_3}\right) + 4d_1 = 0,\\ 
    \frac{L_{222}'}{x_2} + 2L_{123} \left(\frac{x_1}{x_2x_3} + \frac{x_3}{x_1x_2} - \frac{x_2}{x_1x_3}\right) + 4d_2 = 0,\\ 
    \frac{L_{333}'}{x_3} + 2L_{123} \left(\frac{x_1}{x_2x_3} + \frac{x_2}{x_1x_3} - \frac{x_3}{x_1x_2}\right) + 4d_3 = 0.
\end{aligned}  
\end{equation}
The system \eqref{eq:l=3nice} is supported on the simplex
\begin{equation}\label{eq:l=3supp}
\conv({\bf 0}, \, {\bf e}_1 - {\bf e}_2 - {\bf e}_3,\, {\bf e}_2 - {\bf e}_1 - {\bf e}_3,\, {\bf e}_3 - {\bf e}_1 - {\bf e}_2),
\end{equation}
whose normalized volume is 4. 
Therefore \eqref{eq:l=3nice} has BKK bound $4$, instead of $D_2=13$ for the largest support of \eqref{eq:system-nice} with $\ell=3$. Homogeneous Einstein metrics on generalized Wallach spaces have been studied in many papers, e.g., \cite{kimura,arva-tams,gen-wallach-einstein,class3,CN} among others. In particular, according to \cite[Thm.~1]{gen-wallach-einstein}, there are at most 4 solutions on each such space. Applying Theorems~\ref{thm:bkk} and \ref{thm:other} to \eqref{eq:l=3nice}, we provide an alternative proof:

\begin{theorem}\label{thm:CN-finiteness}
Each generalized Wallach space $\G/\mathsf H$ in the classification of Nikonorov~\cite{niko-class1,niko-class2} carries at most 4 distinct (diagonal) $\G$-invariant Einstein metrics.
\end{theorem}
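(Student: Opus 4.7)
The plan is to apply Theorems~\ref{thm:bkk} and~\ref{thm:other} directly to the simplified system~\eqref{eq:l=3nice}. First, I would observe that for any generalized Wallach space, positivity of $L_{123}$ and of the $d_i$ ensures that each Laurent polynomial $f_i$ in~\eqref{eq:l=3nice} has in its support all four vertices $\{{\bf 0}, v_1, v_2, v_3\}$ of the simplex $\Sigma$ in~\eqref{eq:l=3supp}, where $v_k = {\bf e}_k - {\bf e}_i - {\bf e}_j$ for $\{i,j,k\} = \{1,2,3\}$. Since the remaining monomial $-2b_id_i/x_i$ has exponent $-{\bf e}_i = \tfrac{1}{2}(v_j + v_k)$ already lying on an edge of $\Sigma$, one has ${\rm Newt}(f_i) = \Sigma$ for each $i$. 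A direct computation yields $3!\operatorname{Vol}(\Sigma) = 4$, so Theorem~\ref{thm:bkk} bounds the number of isolated solutions of~\eqref{eq:l=3nice} in $(\CC^*)^3$ by $4$, counted with multiplicity.

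Next I would rule out positive-dimensional components via Theorem~\ref{thm:other}, by checking that no facial system of~\eqref{eq:l=3nice} admits a root in $(\CC^*)^3$. Enumerating the proper faces of $\Sigma$ (four vertices, six edges, four triangles), the facial systems fall into three classes. On any face containing ${\bf 0}$, pairing two facial equations forces an impossible identity such as $d_i + d_j = 0$. On each edge $[v_j, v_k]$ (whose lattice points are $v_j, v_k$ and the midpoint $-{\bf e}_i$), a root exists precisely when $b_id_i = \pm 2L_{123}$. On the ``top'' triangle $\conv(v_1, v_2, v_3)$, the toric-derivative relation~\eqref{eq:ri-toricder} shows the facial system is equivalent to $\partial_i \operatorname{scal} = 0$ for all $i$, which has a root precisely when the principal $A$-determinant $E_A(\operatorname{scal})$ vanishes.

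The edge conditions can be dispensed with uniformly: expanding $b_id_i = \sum_{\alpha, \beta} \|[e_\alpha, e_\beta]\|_Q^2$ for $\{e_\alpha\}$ a $Q$-orthonormal basis of $\mathfrak m_i$ and $\{e_\beta\}$ one of $\mathfrak g$, decomposing $\mathfrak g = \mathfrak h \oplus \bigoplus_j \mathfrak m_j$, and using bi-invariance of $Q$ together with $[\mathfrak m_i, \mathfrak m_i] \subset \mathfrak h$, one finds $b_id_i = 2L_{123} + H(i) + C(i)$, where $H(i), C(i) \geq 0$ respectively measure the $\mathfrak h$-action on $\mathfrak m_i$ and the norm of $[\mathfrak m_i, \mathfrak m_i]$. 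The equality $b_id_i = 2L_{123}$ would force $\mathfrak m_i$ to be an abelian ideal on which $\mathfrak h$ acts trivially, excluded by Nikonorov's classification; the case $b_id_i = -2L_{123}$ is excluded by $b_id_i \geq 0 < L_{123}$.

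The main obstacle is the top-face condition $E_A(\operatorname{scal}) \neq 0$. Using Proposition~\ref{prop:l=3}, which gives an explicit factorization of $E_A(\operatorname{scal})$ as a product of discriminants supported on the faces of $\operatorname{supp}(\operatorname{scal})$, I would reduce the verification to a finite case analysis against the parameter values tabulated in~\cite{niko-class1, niko-class2}. Once $E_A(\operatorname{scal}) \neq 0$ is established for every generalized Wallach space, Theorem~\ref{thm:other} ensures that all complex solutions of~\eqref{eq:l=3nice} are isolated; combined with the BKK bound, there are at most $4$ such solutions, and hence at most $4$ $\G$-invariant Einstein metrics, since these correspond to the positive real solutions.
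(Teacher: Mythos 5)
Your overall strategy (BKK bound for the simplex \eqref{eq:l=3supp} plus Bernstein's Other Theorem, with the facial conditions checked against Nikonorov's classification) is exactly the paper's, which packages the facial analysis as Proposition~\ref{prop:l=3} and then runs through the four types in the classification. However, there is a genuine gap: your plan hinges on showing that no facial system ever degenerates for an actual generalized Wallach space, and that is false. The edge condition $b_id_i = 2L_{123}$, which you claim is ``excluded by Nikonorov's classification,'' is in fact realized: e.g.\ in row~1 of Table~\ref{tab:genwallach1} with $l=m=1$ one has $b_3d_3 = 1 = 2L_{123}$ (the summand $\mathfrak m_3\cong\RR^l\otimes\RR^m$ is one-dimensional, hence abelian with trivial $\mathfrak h$-action --- your identity $b_id_i = 2L_{123}+H(i)+C(i)$ is correct, but $H(i)=C(i)=0$ does occur), and row~4 with $l=2$ or $l=3$ gives further instances. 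Likewise, for type~(1) spaces (products of three irreducible symmetric spaces) one has $L_{123}=0$, so your opening claim that every ${\rm Newt}(f_i)$ equals the full simplex already fails there, and the vertex factor $L_{123}$ of the discriminant vanishes. In type~(4) the discriminant also vanishes whenever $\dim\mathsf F = 3\dim\mathsf K$.

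Consequently, the final step of your argument --- ``once $E_A(\operatorname{scal})\neq 0$ is established for every generalized Wallach space, Theorem~\ref{thm:other} ensures all solutions are isolated'' --- cannot be completed: the nonvanishing simply does not hold for every space in the classification, and when it fails Theorem~\ref{thm:other} gives you nothing (a priori there could be positive-dimensional components or fewer isolated solutions than the mixed volume, but no upper bound of $4$). The paper closes this gap by identifying precisely the degenerate cases and, in each of them, solving \eqref{eq:l=3nice} explicitly to verify directly that there are $3$ or $4$ solutions in $(\CC^*)^3$ (and, for type~(1), a unique solution). You need an analogous fallback for the finitely many degenerate families; without it the proof does not go through.
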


In particular, the Finiteness Conjecture holds for generalized Wallach spaces.
The main tool we use to prove \Cref{thm:CN-finiteness} is the following BKK discriminant:

\begin{proposition}\label{prop:l=3}
If $d_1, d_2, d_3 > 0$, and
  \begin{equation}\label{eq:l=3discriminant}
    L_{123}
    \begin{vmatrix}
      4L_{123} & L_{111}'\\
      L_{111}' & 4L_{123}
    \end{vmatrix}
    \begin{vmatrix}
      4L_{123} & L_{222}'\\
      L_{222}' & 4L_{123}
    \end{vmatrix}
    \begin{vmatrix}
      4L_{123} & L_{333}'\\
      L_{333}' & 4L_{123}
    \end{vmatrix}
    \begin{vmatrix}
      4L_{123} & L_{111}' & L_{222}'\\
      L_{111}' & 4L_{123} & L_{333}'\\
      L_{222}' & L_{333}' & 4L_{123}
    \end{vmatrix}
  \end{equation}
  does not vanish, then \eqref{eq:l=3nice} has exactly $4$ solutions in $(\CC^*)^3$, counted with multiplicity.
\end{proposition}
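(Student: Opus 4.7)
The plan is to apply Bernstein's Other Theorem (\Cref{thm:other}) to \eqref{eq:l=3nice}. Each polynomial in \eqref{eq:l=3nice} has Newton polytope equal to the simplex $\Delta = \conv({\bf 0}, v_1, v_2, v_3)$, where $v_i = {\bf e}_i-{\bf e}_j-{\bf e}_k$ with $\{i,j,k\}=[3]$, so the BKK bound is $3!\operatorname{Vol}(\Delta)=|\det(v_1\mid v_2\mid v_3)|=4$, matching the claimed count. It therefore suffices to verify that, under the stated hypotheses, every facial system of \eqref{eq:l=3nice} on a proper face of $\Delta$ has no root in $(\CC^*)^3$.

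I would first dispatch the faces that do not contribute to \eqref{eq:l=3discriminant}. \emph{(i)} At the vertex ${\bf 0}$, the facial system $\{4d_i=0\}_{i\in[3]}$ has no solution since ${\bf d}>0$. \emph{(ii)} At each vertex $v_r$, the facial system reduces to a single-monomial equation with coefficient $\pm 2L_{123}$, which has no root in $(\CC^*)^3$ by the factor $L_{123}$ of \eqref{eq:l=3discriminant}. \emph{(iii)} On each short edge $[{\bf 0},v_r]$ or $2$-face $[{\bf 0},v_r,v_s]$, the monomial $x^{v_r}$ (and $x^{v_s}$, when present) appears in $f_r|_F$ and $f_s|_F$ with opposite signs, so $f_r|_F+f_s|_F=4(d_r+d_s)$, contradicting ${\bf d}>0$.

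\emph{(iv)} For each long edge $[v_j,v_k]$, the unique interior lattice point is $-{\bf e}_i$ with $\{i,j,k\}=[3]$. The restrictions of $f_j$ and $f_k$ involve only $x^{v_j}$ and $x^{v_k}$ and force $x^{v_j}=x^{v_k}$; together with $v_j+v_k=-2{\bf e}_i$, this yields $x^{v_j}=\varepsilon/x_i$ for some $\varepsilon\in\{\pm 1\}$. Substituting into the restriction of $f_i$, which also contains $L'_{iii}/x_i$, leaves $L'_{iii}+4\varepsilon L_{123}=0$. Ruling out both signs is precisely the non-vanishing of the $2\times 2$ factor $\bigl|\begin{smallmatrix}4L_{123} & L'_{iii}\\ L'_{iii} & 4L_{123}\end{smallmatrix}\bigr|$ in \eqref{eq:l=3discriminant}.

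\emph{(v)} The main obstacle is the $2$-face $F=[v_1,v_2,v_3]$, which does not contain ${\bf 0}$. Clearing denominators by $x_1x_2x_3$ turns the facial system into the three quadrics
\begin{equation*}
L'_{iii}\,x_jx_k+2L_{123}(x_j^2+x_k^2-x_i^2)=0, \qquad \{i,j,k\}=[3].
\end{equation*}
Summing these in pairs cancels the squared terms in $x_j$ and $x_k$ and yields, after division by $x_i\neq 0$, a homogeneous linear system in $(x_1,x_2,x_3)$ whose coefficient matrix has determinant equal, up to sign, to the $3\times 3$ determinant in \eqref{eq:l=3discriminant}. When that determinant is nonzero, the linear system forces ${\bf x}={\bf 0}\notin(\CC^*)^3$, so the facial system has no root in $(\CC^*)^3$. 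Combining the five cases, \Cref{thm:other} applies and yields exactly $4$ solutions in $(\CC^*)^3$.
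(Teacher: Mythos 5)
Your proof is correct and follows essentially the same route as the paper: both apply Bernstein's Other Theorem to the simplex of normalized volume $4$ and check, face by face, that each facial system is inconsistent on $(\CC^*)^3$ unless the corresponding factor of \eqref{eq:l=3discriminant} (or a sum of $d_i$'s) vanishes. Your case (v) computation is a correct explicit derivation of the $3\times 3$ factor — the symmetric matrix you obtain is a relabeling of the one in \eqref{eq:l=3discriminant} with the same determinant $(4L_{123})^3-4L_{123}\sum_i (L_{iii}')^2+2L_{111}'L_{222}'L_{333}'$ — so the argument matches the paper's, just with the resultant computations written out.
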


\begin{proof}
  For each face of the simplex \eqref{eq:l=3supp}, we compute its facial system and corresponding resultant in the parameters $L_{111}'$, $L_{222}'$, $L_{333}'$, $L_{123}$, $d_1$, $d_2$, and $d_3$.
  The facial systems of the vertices all have resultant $L_{123}$.
  Each face $\conv({\bf e}_i - {\bf e}_j - {\bf e}_k,\, {\bf e}_k - {\bf e}_i - {\bf e}_j)$ contributes the resultant $(L_{jjj}')^2 - 16L_{123}^2$.
  The resultant of the face $\conv({\bf e}_1 - {\bf e}_2 - {\bf e}_3, \, {\bf e}_2 - {\bf e}_1 - {\bf e}_3, \, {\bf e}_3 - {\bf e}_1 - {\bf e}_2)$ is the $3\times 3$ determinant in \eqref{eq:l=3discriminant}.
  The ideals of the faces $\conv({\bf 0},  {\bf e}_i - {\bf e}_j - {\bf e}_k)$ and $\conv({\bf 0}, \, {\bf e}_i - {\bf e}_j - {\bf e}_k, \, {\bf e}_j - {\bf e}_i - {\bf e}_k)$ contain the relation $d_i + d_j$, which is never zero since all $d_i>0$.
  As \eqref{eq:l=3supp} has normalized volume 4, the conclusion follows by Theorems~\ref{thm:bkk}~and~\ref{thm:other}.
\end{proof}

We observe that, by \eqref{eq:Adet-factors}, the polynomial \eqref{eq:l=3discriminant} is the principal $A$-determinant of
\begin{equation*}
 - 4 \operatorname{scal}({\bf x}) = \frac{L_{111}'}{x_1} +
  \frac{L_{222}'}{x_2} +
  \frac{L_{333}'}{x_3} +
  2L_{123}\left(\frac{x_1}{x_2x_3} + \frac{x_2}{x_1x_3} + \frac{x_3}{x_1x_2}\right).
\end{equation*}

\begin{proof}[Proof of \Cref{thm:CN-finiteness}]
According to Nikonorov~\cite{niko-class1,niko-class2}, there are four types of generalized Wallach spaces $\G/\mathsf H$; we analyze them below using the same labels. In all cases below, we refer to the system \eqref{eq:l=3nice} with $b_i=1$, hence $L'_{iii}=-2d_i$, for all $i\in [3]$.

\smallskip
\noindent
Type (1). These spaces are products of three irreducible symmetric spaces of compact type, so all $L_{ijk}=0$, and \eqref{eq:l=3nice} has a unique solution ${\bf x}=\big(\frac12,\frac12,\frac12\big)$ in $(\CC^*)^3$.

\renewcommand{\arraystretch}{1.5}
\begin{table}[htp]
\begin{center}
\resizebox{\textwidth}{!}{
\begin{tabular}{|c||c|c|c|c|c|c|}
\hline
& \ $\mathfrak{g}$& $\mathfrak{h}$ & $d_1$ &  $d_2$ & $d_3$  & $L_{123}$   \\
\hline\hline
1&  $\mathfrak{so}(k + l + m)$   &  $\mathfrak{so}(k) \oplus  \mathfrak{so}(l)  \oplus  \mathfrak{so}(m)$  & $kl$ & $km$ & $lm$ &
  $ \frac{klm}{2(k + l + m - 2)} $ \\  \hline
2&  $\mathfrak{su}(k + l + m)$   &$  \mathfrak{s(u}(k) \oplus  \mathfrak u(l)  \oplus \mathfrak u(m))$  & $2kl$ & $2km$ & $2lm$ &
  $  \frac{klm}{k + l + m} $ \\  \hline
3&  $\mathfrak{sp}(k + l + m)$   &  $\mathfrak{sp}(k) \oplus  \mathfrak{sp}(l)  \oplus  \mathfrak{sp}(m)$  & $4kl$ & $4km$ & $4lm$ &  $\frac{2klm}{k + l + m+1}$ \\  \hline
4&$\mathfrak{su}(2l)$, \,$l\geq 2$ &$\mathfrak u(l)$&  $l(l-1 )$  &  $l(l+1)$  &  $l^2-1$  &  $\frac{l(l^2-1)}{4}$ \\  \hline
5&$\mathfrak{so}(2l)$, \,$l\geq 4$ &$\mathfrak u(1)\oplus \mathfrak u(l-1)$ &  $2(l-1 )$   &  $2(l-1 )$  &  $(l - 1) (l - 2)$  &  $\frac{l-1}{2}$ \\  \hline \hline
6&$\mathfrak e_6$ &$\mathfrak{su}(4)\oplus \mathfrak{sp}(1)^2\oplus \RR$   &$16$ & $16$ & $24$ &  $4$ \\  \hline
7&$\mathfrak e_6$ &$\mathfrak{so}(8)\oplus \RR^2$ & $16$ & $16$ & $16$ &  $\frac{8}{3}$ \\  \hline
8&$\mathfrak e_6$ &$\mathfrak{sp}(3)\oplus \mathfrak{sp}(1)$ & $14$ & $28$ & $12$ &  $\frac{7}{2}$ \\  \hline
9&$\mathfrak e_7$ &$\mathfrak{so}(8)\oplus \mathfrak{sp}(1)^3$ & $32$ & $32$ & $32$ &  $\frac{64}{9}$ \\  \hline
10&$\mathfrak e_7$ &$\mathfrak{su}(6)\oplus \mathfrak{sp}(1)\oplus \RR$ & $30$ & $40$ & $24$ &  $\frac{20}{3}$ \\  \hline
11&$\mathfrak e_7$ &$\mathfrak{so}(8)$ & $35$ & $35$ & $35$ &  $\frac{175}{18}$ \\  \hline
12&$\mathfrak e_8$ &$\mathfrak{so}(12)\oplus \mathfrak{sp}(1)^2$ & $64$ & $64$ & $48$ &  $\frac{64}{5}$ \\  \hline
13&$\mathfrak e_8$ &$\mathfrak{so}(8)\oplus \mathfrak{so}(8)$ & $64$ & $64$ & $64$ &  $\frac{256}{15}$ \\  \hline
14&$\mathfrak f_4$ &$\mathfrak{so}(5)\oplus \mathfrak{sp}(1)^2$ & $8$ & $8$ & $20$ &  $\frac{20}{9}$ \\  \hline
15&$\mathfrak f_4$ &$\mathfrak{so}(8)$ & $8$ & $8$ & $8$ &  $\frac{8}{9}$ \\  \hline
\end{tabular}
}
\end{center}
\caption{Generalized Wallach spaces $\G/\mathsf H$ with $\G$ simple, from \cite[Table 1]{niko-class1}.}\label{tab:genwallach1}
\end{table}
\renewcommand{\arraystretch}{1}%

\smallskip
\noindent
Type (2). These spaces have $\G$ simple and are listed in \cite[Tab.~1]{niko-class1}, in terms of the Lie algebras $\mathfrak g$ and $\mathfrak h$, which we reproduce in \Cref{tab:genwallach1} using our notation.
The first 5 rows are infinite families involving classical Lie algebras, and we may assume $k\geq l\geq m\geq 1$ in rows 1-3. Using cylindrical algebraic decomposition on a computer algebra system, e.g., the command \texttt{Reduce} in \texttt{Mathematica}, one verifies that the only cases where \eqref{eq:l=3discriminant} vanishes are row 1 if $l=m=1$, and row 4 with $l=2$ or $l=3$. In all these cases, the system \eqref{eq:l=3nice} can be solved explicitly and there are $3$ or $4$ solutions in $(\CC^*)^3$. The remaining 10 rows are sporadic examples involving exceptional Lie algebras, and \eqref{eq:l=3discriminant} does not vanish in all such cases.

\smallskip
\noindent
Type (3). These are so-called Ledger--Obata spaces $\G/\mathsf H$ with $\G=\mathsf F\times \mathsf F\times \mathsf F\times \mathsf F$ and $\mathsf H=\Delta \mathsf F$, where $\mathsf F$ is a simple Lie group. In this situation, $d_i=\dim \mathsf F$ and $L_{123}=\tfrac14\dim \mathsf F$, so one easily checks that \eqref{eq:l=3discriminant} does not vanish. In fact, \eqref{eq:l=3nice} has a unique solution ${\bf x}=\big(\frac38,\frac38,\frac38\big)$ in $(\CC^*)^3$.

\smallskip
\noindent
Type (4). These spaces are $\G/\mathsf H$ with $\G=\mathsf F\times \mathsf F$ and $\mathsf H=\Delta \mathsf K\subset \mathsf K\times \mathsf K$, where $(\mathsf F,\mathsf K)$ is an irreducible symmetric pair of compact type, with $\mathsf F$ simple and $\mathsf K$ simple or 1-dimensional. In this case, $d_1=d_2=\dim \mathsf F/\mathsf K$, $d_3=\dim \mathsf K$, and $L_{123}=\tfrac14 \dim \mathsf F/\mathsf K$, so \eqref{eq:l=3discriminant} vanishes if and only if $\dim \mathsf F = 3 \dim \mathsf K$. The only symmetric pairs $(\mathsf F, \mathsf K)$ as above with $\dim \mathsf F = 3 \dim \mathsf K$ are $(\mathsf{SU}(2), \mathsf{SO}(2))$, $(\mathsf{Sp}(1), \mathsf{SO}(2))$, and $(\mathsf{SO}(3), \mathsf{SO}(2))$; in all these cases, the system \eqref{eq:l=3nice} can be solved explicitly and there are 3 solutions in $(\CC^*)^3$.
\end{proof}

\begin{remark} 
According to \cite{CN}, homogeneous Einstein metrics on generalized Wallach spaces of type (2) had been classified except for row one, i.e., $\mathsf{SO}(k+l+m)/\mathsf{SO}(k) \mathsf{SO}(l) \mathsf{SO}(m)$, $k\geq l\geq m\geq 1$. 
Homogeneous Einstein metrics on spaces of type~(3), including \emph{nondiagonal} ones, were classified in \cite{cnn2}. 
The existence of type~(4) was only noticed~years~later~\cite{niko-class2}. 
\end{remark}

\section{Numerical experiments on full flag manifolds}\label{sec:numerics}

In this section, we count and compute 
$\G$-invariant Einstein metrics on the full flag manifolds $\G/\mathsf H$, where $\G$ is a compact simple Lie group of classical type and $\mathsf H\subset \G$ is a maximal torus. We use the numerical algebraic geometry software \verb+HomotopyContinuation.jl+ \cite{hc}. 
These systems were previously studied using Gr\"obner basis techniques; see, e.g., \cite{guzman2024, sakane}.

\subsection{Numerical Algebraic Geometry}\label{subsec:nag}
We include a brief discussion of our numerical techniques.
We first use a monodromy method to solve \eqref{eq:system-nice} with generic parameters. 
The number of solutions to the generic system is equal to the BKK bound.
We then use a parameter homotopy to track the generic parameters to our special parameters while simultaneously tracking the solutions. 
The solutions to the generic and special systems are both certified using interval arithmetic \cite{BRT}; 
this produces a proof that there exists an actual solution within a certain radius of every floating-point solution, and that these solutions are distinct.
Therefore, this procedure yields a rigorous lower bound on the number of solutions to a system.

If the special system is BKK generic, then we have an upper bound as well, and hence a proof that we found all of the solutions.
Because we are using numerical methods, it is possible that some of the paths fail when tracking solutions from generic to special parameters. 
In this case, the number of certified solutions to the special system is smaller than the BKK bound and so we cannot prove that we have computed all solutions.
However, it is rare that a path to a true solution fails, so we have high confidence that the numbers in Table~\ref{tab:numerics} are the actual solution counts. 

We remark that numerical methods can handle much larger systems than symbolic methods. For instance, we could not produce Table~\ref{tab:numerics} using only Gr\"{o}bner bases techniques.

\subsection{Setup}
Let $\G$ be a compact simple Lie group of classical type of rank $n$, that is, one of the groups in Table~\ref{tab:rootsystems}.
Set $\mathsf H$ to be the standard maximal torus $\mathsf T^n\subset\G$ that determines the root system $\Phi=\Phi^+\cup(-\Phi^+)$, where $\Phi^+$ is the choice of positive roots listed in Table~\ref{tab:rootsystems}.

\begin{table}[ht]
\renewcommand{\arraystretch}{1.5}
\begin{center}
\resizebox{\textwidth}{!}{
  \begin{tabular}{|c||c|c|c|c|c|}
  \hline
  & $\G$ & $\Phi^+$  & $\ell=|\Phi^+|$ & Weyl group & Nonvanishing $L_{\alpha,\,\beta,\,\gamma}$\\
  \hline\hline
$\begin{array}{c}
  {\rm A}_n \\[-2ex] _{n\geq1}
\end{array}$ 
& $\mathsf{SU}(n+1)$  & $ \begin{array}{c} \varepsilon_i-\varepsilon_j \\[-2ex] _{i < j \in [n+1]} \end{array}$ & $\binom{n+1}{2}$ & $\mathsf S_{n+1}$  & $L_{\varepsilon_i-\varepsilon_k,\, \varepsilon_k-\varepsilon_j,\,\varepsilon_i-\varepsilon_j}=\frac{1}{n+1}$ \\ \hline
$\begin{array}{c}
  {\rm B}_n \\[-2ex] _{n\geq2}
\end{array}$ 
  & $\mathsf{SO}(2n+1)$ & $\begin{array}{c} \varepsilon_i \pm \varepsilon_j, \, \varepsilon_k \\[-2ex] _{i<j \in [n],\,\, k \in [n]} \end{array}$ & $n^2$ & $(\ZZ_2)^{n} \ltimes \mathsf S_n$  &
  $\begin{array}{r}
L_{\varepsilon_i - \varepsilon_j,\,\varepsilon_j - \varepsilon_k,\,\varepsilon_i-\varepsilon_k}=\frac{1}{2n-1} \\ 
L_{\varepsilon_i + \varepsilon_j,\,\varepsilon_j + \varepsilon_k,\,\varepsilon_i-\varepsilon_k}=\frac{1}{2n-1} \\ 
L_{\varepsilon_i - \varepsilon_j,\,\varepsilon_i,\,\varepsilon_j}=\frac{1}{2n-1}\\
L_{\varepsilon_i + \varepsilon_j,\,\varepsilon_i,\,\varepsilon_j}=\frac{1}{2n-1}
  \end{array}$ \\ \hline
$\begin{array}{c}
  {\rm C}_n \\[-2ex] _{n\geq3}
\end{array}$ 
& $\mathsf{Sp}(n)$  & $\begin{array}{c} \varepsilon_i \pm \varepsilon_j, \, 2\varepsilon_k \\[-2ex] _{i < j\in [n],\,\,  k\in [n]} \end{array}$ & $n^2$ & $(\ZZ_2)^{n} \ltimes \mathsf S_n$ & 
$\begin{array}{r}
L_{\varepsilon_i - \varepsilon_j,\,\varepsilon_j - \varepsilon_k,\,\varepsilon_i-\varepsilon_k}=\frac{1}{2n+2} \\ 
L_{\varepsilon_i - \varepsilon_j,\,\varepsilon_j + \varepsilon_k,\,\varepsilon_i + \varepsilon_k}=\frac{1}{2n+2} \\ 
L_{\varepsilon_i - \varepsilon_j,\,2\varepsilon_j,\,\varepsilon_i+\varepsilon_j}=\frac{1}{n+1}
\end{array}$ 
\\ \hline
$\begin{array}{c}
  {\rm D}_n \\[-2ex] _{n\geq4}
\end{array}$ 
& $\mathsf{SO}(2n)$  & $ \begin{array}{c} \varepsilon_i \pm \varepsilon_j \\[-2ex] _{i<j\in[n]} \end{array}$ & $2\binom{n}{2}$ & $(\ZZ_2)^{n-1} \ltimes \mathsf S_n$
&
$\begin{array}{r}
L_{\varepsilon_i-\varepsilon_j,\, \varepsilon_j - \varepsilon_k,\,\varepsilon_i - \varepsilon_k}=\frac{1}{2n-2}\\
L_{\varepsilon_i-\varepsilon_j,\, \varepsilon_j + \varepsilon_k,\,\varepsilon_i + \varepsilon_k}=\frac{1}{2n-2}\\
\end{array}$   \\   \hline
  \end{tabular}
  }
  \end{center}

  \medskip
\caption{Compact simple Lie groups of classical type and positive roots.
The last column lists the nonvanishing structure constants $L_{\alpha,\,\beta,\,\gamma}$ for $\alpha,\beta,\gamma$, up to permuting $\{\alpha,\beta,\gamma\}$ and all sign changes. For details, see \cite{sakane}.
}\label{tab:rootsystems}
\end{table}

Fix the bi-invariant metric $Q=-B$ given by the negative of the Cartan--Killing form on $\mathfrak g$, and denote by $\mathfrak h_\CC \subset \mathfrak g_{\CC}$ the complexifications of $\mathfrak h\subset\mathfrak g$.
Given a linear functional $\alpha \colon \mathfrak h_{\CC}\to\RR$, set $\mathfrak g_{\alpha} \coloneqq \{ X\in\mathfrak g_{\CC} : [H,X]= \sqrt{-1}\,\alpha(H) X \text{ for all } H \in \mathfrak h_{\CC} \}$, and recall the decomposition
\[\mathfrak g_{\CC}=\mathfrak h_{\CC}\oplus \bigoplus_{\alpha\in\Phi} \mathfrak g_\alpha,\]
where $\dim_{\CC}\mathfrak g_{\alpha}=1$ for all roots $\alpha \in \Phi$; see, e.g.,~\cite[\S 4.3]{mybook}. Moreover, $\overline{\mathfrak g_{\alpha}}=\mathfrak g_{-\alpha}$, and $[\mathfrak g_{\alpha}, \mathfrak g_{\beta}]\subset \mathfrak g_{\alpha +\beta}$ for all $\alpha,\beta \in \Phi$, where $\mathfrak g_0 = \mathfrak h_{\CC}$, and $B(\mathfrak g_{\alpha}, \mathfrak g_{\beta})=0$ if $\alpha + \beta\neq0$.
Thus, the $Q$-orthogonal complement $\mathfrak m$ to the subalgebra $\mathfrak h\subset\mathfrak g$ decomposes as the direct sum 
\begin{equation*}
  \mathfrak m = \bigoplus_{\alpha\in\Phi^+} \mathfrak m_\alpha,
\end{equation*}
where $\mathfrak m_\alpha \coloneqq (\mathfrak g_{\alpha}\oplus\mathfrak g_{-\alpha})\cap \mathfrak g$, for all $\alpha\in\Phi$, are irreducible $\mathrm{Ad}_{\mathsf H}$-invariant representations. 
Note that $\mathfrak m_\alpha=\mathfrak m_{-\alpha}$, and $\mathfrak m_\alpha\not\cong\mathfrak m_\beta$ if $\alpha\neq\pm \beta$ since $\mathrm{Ad}(\exp X)$ is a rotation by angle $\alpha(X)$ on $\mathfrak m_{\alpha}\cong \RR^2$, for all $X\in \mathfrak h$ and $\alpha\in\Phi^+$. Thus, in this section, we write \eqref{eq:splitting-m} replacing indices $i\in [\ell]$ with indices $\alpha\in\Phi^+$; accordingly, we write ${\bf x}=(x_i)_{i\in [\ell]}$ as ${\bf x}=(x_\alpha)_{\alpha\in \Phi^+}$. Moreover, $d_\alpha=\dim_{\RR}\mathfrak m_\alpha=2$ and $b_\alpha=1$ for all $\alpha\in\Phi^+$, i.e., ${\bf d}={\bf 2}$ and $\bf b=\bf 1$. The nonvanishing structure constants $L_{\alpha,\,\beta,\,\gamma}$ are given in Table~\ref{tab:rootsystems}. Note that $L_{\alpha,\,\beta,\,\gamma}=0$ unless $\gamma=\alpha\pm \beta$, up to permuting $\{\alpha,\beta,\gamma\}$ and changing signs, since $[\mathfrak m_{\alpha},\mathfrak m_{\beta}]\subset \mathfrak m_{\alpha \pm \beta}$. 

Each choice of positive roots, or, equivalently, choice of a Weyl chamber, corresponds to a choice of $\G$-invariant complex structure on $\sf G/H$. For each such choice, there is a unique $\G$-invariant K\"ahler-Einstein metric on $\sf G/H$ compatible with that complex structure; see \cite[Thm.~8.95]{besse} or \cite[Lem.~3]{sakane}. As a solution ${\bf x}$ to \eqref{eq:system} with the choice $\Phi^+$ of positive roots, the K\"ahler-Einstein metric is characterized by the property that $x_{\alpha+\beta}= x_{\alpha}+x_{\beta}$ for all \emph{primitive} roots $\alpha,\beta\in\Phi^+$, i.e., roots $\alpha,\beta\in\Phi^+$ that are not sums of other positive roots.
The gauge group $\mathsf{N(H)/H}$ is the Weyl group listed in Table~\ref{tab:rootsystems}, whose action permutes $\Phi$ and hence the variables ${\bf x}=(x_\alpha)$, producing isometric metrics; see Section~\ref{subsec:gaugegroup}. This $\mathsf{N(H)/H}$-action on $\bf x$ leaves the system \eqref{eq:system} invariant, as it simply permutes its equations. Note that the $\mathsf{N(H)/H}$-orbit of the K\"ahler-Einstein metric consists of metrics that are K\"ahler with respect to the $\G$-invariant complex structures corresponding to other Weyl chambers.

The normal homogeneous metric $Q|_{\mathfrak m}$ on $\G/\mathsf H$ is Einstein, for some Einstein constant $\lambda>0$, if and only if all roots $\alpha \in \Phi$ have the same length~\cite[Cor~1.5]{wang-ziller-ens}. By Table~\ref{tab:rootsystems}, this occurs only in types ${\rm A}_n$ and ${\rm D}_n$. In these cases, ${\bf x}= \lambda \, {\bf 1}$ is a solution to \eqref{eq:system}.

\subsection{Counting and computing Einstein metrics}
Our numerical results on full flag manifolds $\sf G/H$ are summarized in Theorem~\ref{thm:examples} and Table~\ref{tab:numerics}.

\begin{table}[ht]
\renewcommand{\arraystretch}{1.5}
\begin{center}
  \begin{tabular}{|c||c|c|c|c||c|c||c||c|}
    \hline
    Type of $\G$ & ${\rm A}_2$ & ${\rm A}_3$ & ${\rm A}_4$ & ${\rm A}_5$ & ${\rm B}_2$ & ${\rm B}_3$ & ${\rm C}_3$ & ${\rm D}_4$ \\
    \hline\hline
    BKK Bound      & $4$ & $80$ & $9\,168$ & $6\,603\,008$ & $12$ & $5376$ & $5232$ & $239\,744$\\
    \hline
    $\#$ solutions in $(\CC^*)^\ell$  & $4$ & $59$ & $7\,908$ &  $5\,037\,448$ & $10$ & $4224$ & $4512$ & $ 150\,256$\\
    \hline
    $\#$ solutions in $(\RR^*)^\ell$  & $4$ & $29$ & $1\,596$ & $191\,252$ & $6$ & $750$ & $728$ & $ 11\,128$\\    
    \hline
    \renewcommand{\arraystretch}{1}
    $\begin{array}{c}
    \# \, \text{solutions in }\RR_+^\ell\text{, i.e.,}\\
      \#\, \G\text{-invariant Einstein}\\\text{metrics on }\sf G/H
        \end{array}$
        \renewcommand{\arraystretch}{1.5}
     & $4$ & $29$ & $396$ & $6572$ & $6$ & $48$ & $64$ & $ 184$\\
    \hline
    \renewcommand{\arraystretch}{1}
    $\begin{array}{c}
        \#\, \text{isometry classes of}\\ \G\text{-invariant Einstein}\\\text{metrics on }\sf G/H
        \end{array}$
        \renewcommand{\arraystretch}{1.5}
     & 2 & 4 & 12 &  35 & 2 & 5 & 4 & 5\\
    \hline
  \end{tabular}
  \end{center}

  \medskip
  \caption{Isolated solutions (without multiplicity) to the Einstein equations \eqref{eq:system} on the full flag manifold $\sf G/H$, where $\G$ is a compact simple Lie group and $\sf H\subset G$ is a maximal torus, compared with BKK bound.}\label{tab:numerics}
\end{table}

Because the structure constants mostly take the same values (see Table~\ref{tab:rootsystems}), we do not expect the system to be BKK generic and this is confirmed by the difference between the first and second rows of Table~\ref{tab:numerics}.
Note that the BKK discriminants for these systems are different from the one in Theorem~\ref{thm:bkkdiscriminant}, since many structure constants vanish and hence the supports of these systems are smaller.
The last row in Table~\ref{tab:numerics} is obtained using the volume \eqref{eq:vol} to distinguish nonisometric solutions, and the action of the gauge group $\mathsf{N(H)/H}$ to recognize isometric solutions. For type ${\rm D}_4$, besides the gauge group $\mathsf{N(H)/H}\cong (\ZZ_2)^{3} \ltimes \mathsf S_4$, we also use the group of triality (outer) automorphisms to recognize isometric solutions. For $\G$ of type ${\rm A}_4$, ${\rm B}_3$, ${\rm C}_3$, and ${\rm D}_4$, we list the coefficients of a representative from each isometry class in Tables~\ref{tab:A4}, \ref{tab:B3}, \ref{tab:C3} and \ref{tab:D4}, respectively.

Consider the family ${\rm A}_n$, that is, the homogeneous space $\mathsf{SU(n+1)}/\mathsf T^n$. The case $n=1$ is simply the 2-sphere $S^2$, which has a unique Einstein metric. For $n=2$, this is the Wallach flag manifold and there are exactly 4 solutions, but only 2 up to isometries. Arvanitoyeorgos~\cite{arva-tams} showed that, if $n\geq 3$, there are at least $\tfrac{(n+1)!}{2}+n+2$ solutions; however, up to isometries, these solutions yield only 3 distinct homogeneous Einstein metrics \cite[\S 6]{arva-tams}.
The entries in column ${\rm A}_3$ may be found in \cite[p.~305]{graev2}. 
Recently, Guzman~\cite{guzman2024} announced that, for $n=4$, there are at least 7 nonisometric homogeneous Einstein metrics. 
The entries $396$ and $12$ in ${\rm A}_4$ and $35$ in ${\rm A}_5$ were computed using a different numerical method in \cite{GM16}.
The number $6572$ in the ${\rm A}_5$ column is an improvement on the number $3941$ in \cite[Thm.~4]{GM16}.
Because we compute all solutions for a general system and specialize to specific parameters, we can say with a high degree of confidence that we have found \emph{all} solutions. 
For type ${\rm B}_2$,
it is proven in \cite[p. 81]{sakane} that $\sf{SO(5)/T^2}$  admits precisely $2$ homogeneous Einstein metrics up to isometries.
The numbers $48$ and $5$ in column ${\rm B}_3$ were found in \cite{WLZ18} and the numbers $64$ and $4$ in column ${\rm C}_3$ were found in \cite{GW}.

\begin{proof}[Proof and discussion of Theorem~\ref{thm:examples}]
Let $\sf G/H$ be a full flag manifold where $\G$ is a compact simple Lie group of type ${\rm A}_n$, ${\rm B}_n$, ${\rm C}_n$, or ${\rm D}_n$. We use the homogeneous Einstein equations \eqref{eq:system} for $\sf G/H$ as written in \cite[p. 76, 80, 85, 86]{sakane}, respectively; see Remark~\ref{rem:typo}.
The first row was computed numerically with the \verb+Julia+ package \verb+MixedSubdivisions.jl+ \cite{ms}. 
The second, third, and fourth rows were computed numerically using \verb+HomotopyContinuation.jl+ \cite{hc}.
The numerical computations in rows 2 and 3 were certified using the \verb+certify+ command in \verb+HomotopyContinuation.jl+ \cite{BRT}.
For the last row, we use the volume of the floating-point solutions to distinguish isometry classes, and the action of the gauge group (and triality automorphisms in the case ${\rm D}_4$) to detect isometric solutions.
\end{proof}

\begin{remark}\label{rem:typo}
  The equations for type ${\rm B}_n$ given in \cite[p. 80]{sakane} contain a typo. Following \cite{sakane}, instead of $x_{\varepsilon_i - \varepsilon_j}$, $x_{\varepsilon_i + \varepsilon_j}$, and $x_{\varepsilon_k}$, label the variables corresponding to the positive roots $\varepsilon_i - \varepsilon_j$, $\varepsilon_i + \varepsilon_j$, $\varepsilon_k$ as $x_{ij} = x_{ji}$, $y_{ij} = y_{ji}$, $z_k$, respectively; that is, consider the metric
  \[-\sum_{i<j}x_{ij}\, B|_{{\mathfrak m}_{\varepsilon_i - \varepsilon_j}}-\sum_{i<j} y_{ij}\, B|_{{\mathfrak m}_{\varepsilon_i + \varepsilon_j}}-\sum_{k} z_{k}\, B|_{{\mathfrak m}_{\varepsilon_k}}.\] 
  The components $r_{\varepsilon_i - \varepsilon_j}$ and $r_{\varepsilon_i}$ of the Ricci tensor on $\mathfrak m_{\varepsilon_i - \varepsilon_j}$ and $\mathfrak m_{\varepsilon_i}$ are correct as stated in \cite[p. 80]{sakane}, but 
  the expression for the component $r_{\varepsilon_i + \varepsilon_j}$ on $\mathfrak m_{\varepsilon_i + \varepsilon_j}$ must be replaced with
  \begin{align*}
    r_{\varepsilon_j + \varepsilon_j} &=
    \frac{1}{2y_{ij}} + \frac{1}{4(2n - 1)}
    \left (
    \sum_{k \neq i,j} \left ( \frac{y_{ij}}{x_{ik}y_{jk}} - \frac{x_{ik}}{y_{ij}y_{jk}} - \frac{y_{jk}}{y_{ij}x_{ik}} \right ) \right .\\
    &\, +
    \left . \sum_{k \neq i,j} \left ( \frac{y_{ij}}{x_{jk}y_{ik}} - \frac{x_{jk}}{y_{ij}y_{ik}} - \frac{y_{ik}}{y_{ij}x_{jk}} \right )
    +
    \left ( \frac{y_{ij}}{z_i z_j} - \frac{z_i}{y_{ij}z_j} - \frac{z_j}{y_{ij}z_i} \right )
    \right ).
  \end{align*}
\end{remark}

\vfill

\pagebreak[4]
  \global\pdfpageattr\expandafter{\the\pdfpageattr/Rotate 90}
  
\begin{landscape}
\begin{table}
  \[
  \begin{array}{|c||c|c|c|c|c|c|c|c|c|c|c|c|c|c|}
    \hline
    & x_{\varepsilon_1 - \varepsilon_2} & x_{\varepsilon_1 - \varepsilon_3} &
    x_{\varepsilon_1 - \varepsilon_4} & x_{\varepsilon_1 - \varepsilon_5} & x_{\varepsilon_2 - \varepsilon_3} & x_{\varepsilon_2 - \varepsilon_4} & x_{\varepsilon_2 - \varepsilon_5} & x_{\varepsilon_3 - \varepsilon_4} & x_{\varepsilon_3 - \varepsilon_5} & x_{\varepsilon_4 - \varepsilon_5}\\
\hline \hline
1 & 0.35 & 0.35 & 0.35 & 0.35 & 0.35 & 0.35 & 0.35 & 0.35 & 0.35 & 0.35 \\
\hline
2 & 0.2 & 0.4 & 0.6 & 0.8 & 0.2 & 0.4 & 0.6 & 0.2 & 0.4 & 0.2 \\
\hline
3 & 0.4125 & 0.4125 & 0.4125 & 0.275 & 0.4125 & 0.4125 & 0.275 & 0.4125 & 0.275 & 0.275 \\
\hline
4 & 0.29293 & 0.29293 & 0.29293 & 0.29293 & 0.464 & 0.28514 & 0.464 & 0.464 & 0.28514 & 0.464 \\
\hline
5 & 0.34576 & 0.29435 & 0.38612 & 0.38612 & 0.34576 & 0.34576 & 0.34576 & 0.38612 & 0.38612 & 0.29435 \\
\hline
6 & 0.32594 & 0.32594 & 0.37403 & 0.37403 & 0.32594 & 0.37403 & 0.37403 & 0.37403 & 0.37403 & 0.29046 \\
\hline
7 & 0.23831 & 0.32682 & 0.32682 & 0.32682 & 0.46188 & 0.46188 & 0.46188 & 0.32353 & 0.32353 & 0.32353 \\
\hline
8 & 0.43962 & 0.43962 & 0.28239 & 0.31136 & 0.4002 & 0.43962 & 0.26125 & 0.43962 & 0.26125 & 0.31136 \\
\hline
9 & 0.3759 & 0.57085 & 0.40988 & 0.25097 & 0.57085 & 0.40988 & 0.25097 & 0.21629 & 0.4198 & 0.24134 \\
\hline
10 & 0.23846 & 0.46884 & 0.46884 & 0.44758 & 0.33438 & 0.33438 & 0.31092 & 0.30199 & 0.3357 & 0.3357 \\
\hline
11 & 0.22934 & 0.43678 & 0.29698 & 0.27807 & 0.5829 & 0.44276 & 0.43678 & 0.44276 & 0.22934 & 0.29698 \\
\hline
12 & 0.30242 & 0.26522 & 0.59962 & 0.43036 & 0.26522 & 0.59962 & 0.43036 & 0.41348 & 0.22841 & 0.21196 \\
\hline
  \end{array}
  \]

  \medskip
  \caption{Twelve non-isometric homogeneous Einstein metrics $\g = -\sum_{\alpha\in \Phi^+} x_{\alpha} B|_{\mathfrak m_\alpha}$ on ${\sf SU}(5)/\mathsf T^4$. Row 1 is the (rescaled) normal homogeneous metric; row 2 is the K\"ahler-Einstein metric; row 3 is the Arvanitoyeorgos metric; rows 4 - 7 are the metrics $g_1, g_2, g_3, g_4$ recently computed in \cite{guzman2024}. These 12 metrics were also computed in \cite{GM16}.}\label{tab:A4}
\end{table}

\medskip

\begin{table}
\[  
  \begin{array}{|c||c|c|c|c|c|c|c|c|c|c|}
    \hline
    & x_{\varepsilon_1 - \varepsilon_2} & x_{\varepsilon_1 - \varepsilon_3} & x_{\varepsilon_2 - \varepsilon_3} &
    x_{\varepsilon_1 + \varepsilon_2} & x_{\varepsilon_1 + \varepsilon_3} & x_{\varepsilon_2 + \varepsilon_3} &
    x_{\varepsilon_1} & x_{\varepsilon_2} & x_{\varepsilon_3} \\    
    \hline \hline
    1 & 0.2 & 0.4 & 0.2 & 0.8 & 0.6 & 0.4 & 0.5 & 0.3 & 0.1 \\
    \hline
    2 & 0.2851 & 0.46136 & 0.67644 & 0.2851 & 0.46136 & 0.21988 & 0.12544 & 0.29905 & 0.47043 \\
    \hline
    3 & 0.35463 & 0.53427 & 0.35463 & 0.35463 & 0.31966 & 0.35463 & 0.36471 & 0.12338 & 0.36471 \\
    \hline
    4 & 0.41893 & 0.41893 & 0.25482 & 0.41893 & 0.41893 & 0.25482 & 0.11463 & 0.42682 & 0.42682 \\
    \hline
    5 & 0.44551 & 0.3448 & 0.3448 & 0.44551 & 0.3448 & 0.3448 & 0.35542 & 0.35542 & 0.12554 \\
    \hline
  \end{array}
  \]

    \medskip
  \caption{Five non-isometric homogeneous Einstein metrics $\g = -\sum_{\alpha\in \Phi^+} x_{\alpha} B|_{\mathfrak m_\alpha}$ on ${\sf SO}(7)/\mathsf T^3$, also computed in \cite{WLZ18}. Note that row 1 is the K\"ahler-Einstein metric.
   }\label{tab:B3}
\end{table}

\begin{table}
\[  
  \begin{array}{|c||c|c|c|c|c|c|c|c|c|c|}
    \hline
    & x_{\varepsilon_1 - \varepsilon_2} & x_{\varepsilon_1 - \varepsilon_3} & x_{\varepsilon_2 - \varepsilon_3} &
    x_{\varepsilon_1 + \varepsilon_2} & x_{\varepsilon_1 + \varepsilon_3} & x_{\varepsilon_2 + \varepsilon_3} &
    x_{2\varepsilon_1} & x_{2\varepsilon_2} & x_{2\varepsilon_3} \\
    \hline 
    1 & 0.125 & 0.25 & 0.125 & 0.625 & 0.5 & 0.375 & 0.75 & 0.5 & 0.25 \\
    \hline
    2 & 0.44264 & 0.44264 & 0.18385 & 0.18385 & 0.18385 & 0.44264 & 0.42641 & 0.42641 & 0.42641 \\
    \hline
    3 & 0.25198 & 0.41425 & 0.44186 & 0.48093 & 0.1603 & 0.15302 & 0.42319 & 0.48599 & 0.37617 \\
    \hline
    4 & 0.42664 & 0.42664 & 0.25055 & 0.15692 & 0.15692 & 0.4834 & 0.37057 & 0.45745 & 0.45745 \\
    \hline
  \end{array}
  \]
  
    \medskip
  \caption{Four non-isometric homogeneous Einstein metrics $\g = -\sum_{\alpha\in \Phi^+} x_{\alpha} B|_{\mathfrak m_\alpha}$ on ${\sf Sp}(3)/\mathsf T^3$, also computed in \cite{GW}. Note that row 1 is the K\"ahler-Einstein metric.
  }\label{tab:C3}
\end{table}

\medskip

\begin{table}
\[  
  \begin{array}{|c||c|c|c|c|c|c|c|c|c|c|c|c|c|}
    \hline
    & x_{\varepsilon_1 - \varepsilon_2} & x_{\varepsilon_1 - \varepsilon_3} & x_{\varepsilon_1 - \varepsilon_4} &
    x_{\varepsilon_2 - \varepsilon_3} & x_{\varepsilon_2 - \varepsilon_4} & x_{\varepsilon_3 - \varepsilon_4} &    
    x_{\varepsilon_1 + \varepsilon_2} & x_{\varepsilon_1 + \varepsilon_3} & x_{\varepsilon_1 + \varepsilon_4} &
    x_{\varepsilon_2 + \varepsilon_3} & x_{\varepsilon_2 + \varepsilon_4} & x_{\varepsilon_3 + \varepsilon_4} \\
    \hline \hline
    1 & 0.33333 & 0.33333 & 0.33333 & 0.33333 & 0.33333 & 0.33333 & 0.33333 & 0.33333 & 0.33333 & 0.33333 & 0.33333 & 0.33333 \\
    \hline
    2 & 0.16667 & 0.33333 & 0.5 & 0.16667 & 0.33333 & 0.16667 & 0.83333 & 0.66667 & 0.5 & 0.5 & 0.33333 &  0.16667 \\
    \hline
    3 & 0.20833 & 0.41667 & 0.41667 & 0.41667 & 0.41667 & 0.20833 & 0.20833 & 0.41667 & 0.41667 & 0.41667 & 0.41667 & 0.20833 \\
    \hline
    4 & 0.35238 & 0.35238 & 0.49333 & 0.2517 & 0.35238 & 0.35238 & 0.35238 & 0.35238 & 0.2517 & 0.2517 & 0.35238 & 0.35238 \\
    \hline
    5 & 0.5183 & 0.31272 & 0.39651 & 0.39651 & 0.31272 & 0.2257 & 0.26036 & 0.39651 & 0.31272 & 0.31272 & 0.39651 & 0.26036 \\
    \hline
  \end{array}
  \]

    \medskip
  \caption{Five non-isometric Einstein metrics $\g = -\sum_{\alpha\in \Phi^+} x_{\alpha} B|_{\mathfrak m_\alpha}$ on ${\sf SO}(8)/\mathsf T^4$. Note that row 1 is the (rescaled) normal homogeneous metric; row 2 is the K\"ahler-Einstein metric.}\label{tab:D4}
\end{table}
\end{landscape}

\pagebreak[4]
  \global\pdfpageattr\expandafter{\the\pdfpageattr/Rotate 0}

\newcommand{\etalchar}[1]{$^{#1}$}
\providecommand{\bysame}{\leavevmode\hbox to3em{\hrulefill}\thinspace}
\providecommand{\MR}{\relax\ifhmode\unskip\space\fi MR }
\providecommand{\MRhref}[2]{%
  \href{http://www.ams.org/mathscinet-getitem?mr=#1}{#2}
}
\providecommand{\href}[2]{#2}


\begin{thebibliography}{ABB{\etalchar{+}}19}

\bibitem[AB15]{mybook}
{\sc M.~M. Alexandrino and R.~G. Bettiol}, \emph{Lie groups and geometric
  aspects of isometric actions}, Springer, Cham, 2015. \MR{3362465}

\bibitem[ABB{\etalchar{+}}19]{ABB+}
{\sc C.~Am\'endola, N.~Bliss, I.~Burke, C.~R. Gibbons, M.~Helmer, S.~Ho{\c
  s}ten, E.~D. Nash, J.~I. Rodriguez, and D.~Smolkin}, \emph{The maximum
  likelihood degree of toric varieties}, J. Symbolic Comput. \textbf{92}
  (2019), 222--242. \MR{3907355}

\bibitem[Arv93]{arva-tams}
 {\sc A.~Arvanitoyeorgos}, \emph{New invariant {E}instein metrics on generalized flag
  manifolds}, Trans. Amer. Math. Soc. \textbf{337} (1993), no.~2, 981--995.
  \MR{1097162}

\bibitem[Arv15]{arva-survey}
{\sc \bysame}, \emph{Progress on homogeneous {E}instein manifolds
  and some open problems}, Bull. Greek Math. Soc. \textbf{58} (2010/15),
  75--97. \MR{3585267}

\bibitem[Ber75]{bernstein}
{\sc D.~N. Bernstein}, \emph{The number of roots of a system of equations},
  Funkcional. Anal. i Prilo\v{z}en. \textbf{9} (1975), no.~3, 1--4. \MR{435072}

\bibitem[Bes87]{besse}
{\sc A.~L. Besse}, \emph{Einstein manifolds}, Ergebnisse der Mathematik und
  ihrer Grenzgebiete (3), vol.~10, Springer-Verlag, Berlin, 1987. \MR{867684}

\bibitem[BS19]{FS}
{\sc F.~Bihan and I.~Soprunov}, \emph{Criteria for strict monotonicity of the
  mixed volume of convex polytopes}, Adv. Geom. \textbf{19} (2019), no.~4,
  527--540. \MR{4015188}

\bibitem[BL23]{boehm-lafuente}
{\sc C.~B\"{o}hm and R.~A. Lafuente}, \emph{Non-compact {E}instein manifolds
  with symmetry}, J. Amer. Math. Soc. \textbf{36} (2023), no.~3, 591--651.
  \MR{4583772}

\bibitem[BWZ04]{bwz}
{\sc C.~B\"{o}hm, M.~Wang, and W.~Ziller}, \emph{A variational approach for
  compact homogeneous {E}instein manifolds}, Geom. Funct. Anal. \textbf{14}
  (2004), no.~4, 681--733. \MR{2084976}

\bibitem[BKS24]{BKS}
{\sc P.~Breiding, K.~Kohn, and B.~Sturmfels}, \emph{Metric algebraic geometry},
  Oberwolfach Seminars, vol.~53, Birkh\"auser/Springer, Cham, 2024.
  \MR{4738534}

\bibitem[BRT23]{BRT}
{\sc P.~Breiding, K.~Rose, and S.~Timme}, \emph{Certifying zeros of polynomial
  systems using interval arithmetic}, ACM Trans. Math. Software \textbf{49}
  (2023), no.~1, Art. 11. \MR{4567897}


\bibitem[BT18]{hc}
{\sc P.~Breiding and S.~Timme}, \emph{Homotopy{C}ontinuation.jl: A package for
  homotopy continuation in {J}ulia}, Mathematical Software -- ICMS 2018 (J.~H.
  Davenport, M.~Kauers, G.~Labahn, and J.~Urban, eds.), Springer International
  Publishing, 2018, pp.~458--465.


\bibitem[CHKS06]{CHKS06}
{\sc F.~Catanese, S.~Ho{\c s}ten, A.~Khetan, and B.~Sturmfels}, \emph{The
  maximum likelihood degree}, Amer. J. Math. \textbf{128} (2006), no.~3,
  671--697. \MR{2230921}

\bibitem[CKL16]{class3}
{\sc Z.~Chen, Y.~Kang, and K.~Liang}, \emph{Invariant {E}instein metrics on
  three-locally-symmetric spaces}, Comm. Anal. Geom. \textbf{24} (2016), no.~4,
  769--792. \MR{3570416}

\bibitem[CN19]{CN}
{\sc Z.~Chen and Y.~G. Nikonorov}, \emph{Invariant {E}instein metrics on
  generalized {W}allach spaces}, Sci. China Math. \textbf{62} (2019), no.~3,
  569--584. \MR{3905563}

\bibitem[CNN17]{cnn2}
{\sc Z.~Chen, Y.~G. Nikonorov, and Y.~V. Nikonorova}, \emph{Invariant
  {E}instein metrics on {L}edger-{O}bata spaces}, Differential Geom. Appl.
  \textbf{50} (2017), 71--87. \MR{3588641}

\bibitem[CLO05]{CLO}
{\sc D.~A. Cox, J.~Little, and D.~O'Shea}, \emph{Using algebraic geometry},
  second ed., Graduate Texts in Mathematics, vol. 185, Springer, New York,
  2005. \MR{2122859}


\bibitem[DK08]{dk1}
{\sc W.~Dickinson and M.~M. Kerr}, \emph{The geometry of compact homogeneous
  spaces with two isotropy summands}, Ann. Global Anal. Geom. \textbf{34}
  (2008), no.~4, 329--350. \MR{2447903}

\bibitem[DK24]{dk2}
{\sc \bysame}, \emph{Correction: {T}he geometry of compact homogeneous spaces
  with two isotropy summands}, Ann. Global Anal. Geom. \textbf{66} (2024),
  no.~2, Paper No. 8, 21. \MR{4791482}

\bibitem[GW23]{GW}
{\sc Y.-Y. Gao and Y.~Wang}, \emph{{The invariant {E}instein metrics on the
  full flag manifold ${\sf Sp}(3)/{\sf T}$}}, Journal of Lanzhou University of
  Technology \textbf{49} (2023), no.~1, 164--172.

\bibitem[GKZ94]{GKZ}
{\sc I.~M. Gelfand, M.~M. Kapranov, and A.~V. Zelevinsky}, \emph{Discriminants,
  resultants, and multidimensional determinants}, Mathematics: Theory \&
  Applications, Birkh\"auser Boston, Inc., Boston, MA, 1994. \MR{1264417}



\bibitem[Gra06]{graev1}
{\sc M.~M. Graev}, \emph{On the number of invariant {E}instein metrics on a
  compact homogeneous space, {N}ewton polytopes and contractions of {L}ie
  algebras}, Int. J. Geom. Methods Mod. Phys. \textbf{3} (2006), no.~5-6,
  1047--1075. \MR{2264405}

\bibitem[Gra07]{graev2}
{\sc \bysame}, \emph{The number of invariant {E}instein metrics in a
  homogeneous space, the {N}ewton polytope, and contractions of the {L}ie
  algebra}, Izv. Ross. Akad. Nauk Ser. Mat. \textbf{71} (2007), no.~2, 29--88.
  \MR{2316982}

\bibitem[Gra14]{graev3}
{\sc \bysame}, \emph{Einstein equations for invariant metrics on flag spaces
  and their {N}ewton polytopes}, Trans. Moscow Math. Soc. (2014), 13--68, With
  a foreword and comments by D. V. Alexeevsky and A. V. Charnavsky.
  \MR{3308599}

\bibitem[GM]{GM16}
{\sc L.~Grama and R.~M. Martins}, \emph{A numerical treatment to the problem of
  the quantity of {E}instein metrics on flag manifolds}, arXiv: 1601.06972.


\bibitem[Guz]{guzman2024}
{\sc M.~R. Guzman}, \emph{Einstein metrics on the full flag ${F}(n)$}, arXiv:
  2411.09833.



\bibitem[Jab23]{jablonski-survey}
{\sc M.~Jablonski}, \emph{Homogeneous {E}instein manifolds}, Rev. Un. Mat.
  Argentina \textbf{64} (2023), no.~2, 461--485. \MR{4771217}

\bibitem[Jen73]{jensen}
{\sc G.~R. Jensen}, \emph{Einstein metrics on principal fibre bundles}, J.
  Differential Geometry \textbf{8} (1973), 599--614. \MR{353209}

\bibitem[Khe03]{khetan}
{\sc A.~Khetan}, \emph{The resultant of an unmixed bivariate system}, vol.~36,
  2003, International Symposium on Symbolic and Algebraic Computation
  (ISSAC'2002) (Lille), pp.~425--442. \MR{2004036}

\bibitem[Kim90]{kimura}
{\sc M.~Kimura}, \emph{Homogeneous {E}instein metrics on certain {K}\"{a}hler
  {$C$}-spaces}, Recent topics in differential and analytic geometry, Adv.
  Stud. Pure Math., vol.~18, Academic Press, Boston, MA, 1990, pp.~303--320.
  \MR{1145261}

\bibitem[LNF03]{gen-wallach-einstein}
{\sc A.~M. Lomshakov, Y.~G. Nikonorov, and E.~V. Firsov}, \emph{Invariant
  {E}instein metrics on trilocally symmetric spaces}, Mat. Tr. \textbf{6}
  (2003), no.~2, 80--101. \MR{2033648}

\bibitem[Nik16]{niko-class1}
{\sc Y.~G. Nikonorov}, \emph{Classification of generalized {W}allach spaces},
  Geom. Dedicata \textbf{181} (2016), 193--212. \MR{3475745}

\bibitem[Nik21]{niko-class2}
{\sc \bysame}, \emph{Correction to: {C}lassification of generalized {W}allach
  spaces}, Geom. Dedicata \textbf{214} (2021), 849--851. \MR{4308303}

\bibitem[{OEI}]{oeis}
{\sc {OEIS Foundation Inc.}}, \emph{The {O}n-{L}ine {E}ncyclopedia of {I}nteger
  {S}equences: {C}entral {D}elannoy numbers ({A}001850)},
  https://oeis.org/A001850 (Retrieved Jun 1, 2025).

\bibitem[PS97]{pas}
{\sc J.-S. Park and Y.~Sakane}, \emph{Invariant {E}instein metrics on certain
  homogeneous spaces}, Tokyo J. Math. \textbf{20} (1997), no.~1, 51--61.
  \MR{1451858}


\bibitem[Pos09]{postnikov}
{\sc A.~Postnikov}, \emph{Permutohedra, associahedra, and beyond}, Int. Math.
  Res. Not. IMRN (2009), no.~6, 1026--1106. \MR{2487491}

\bibitem[PRW08]{PVL}
{\sc A.~Postnikov, V.~Reiner, and L.~Williams}, \emph{Faces of generalized
  permutohedra}, Doc. Math. \textbf{13} (2008), 207--273. \MR{2520477}


\bibitem[Sak99]{sakane}
{\sc Y.~Sakane}, \emph{Homogeneous {E}instein metrics on flag manifolds},
  vol.~4, 1999, Towards 100 years after Sophus Lie (Kazan, 1998), pp.~71--87.
  \MR{1743146}

\bibitem[Sul18]{sullivant}
{\sc S.~Sullivant}, \emph{Algebraic statistics}, Graduate Studies in
  Mathematics, vol. 194, American Mathematical Society, Providence, RI, 2018.
  \MR{3838364}

\bibitem[Tim19]{ms}
{\sc S.~Timme}, \emph{Mixed{S}ubdivisions.jl -- a {J}ulia package for computing fine
  mixed subdivisions and mixed volumes},
  \url{https://github.com/saschatimme/MixedSubdivisions.jl}, 2019.

\bibitem[Wan12]{wang-survey}
{\sc M.~Y.-K. Wang}, \emph{Einstein metrics from symmetry and bundle
  constructions: a sequel}, Differential geometry, Adv. Lect. Math. (ALM),
  vol.~22, Int. Press, Somerville, MA, 2012, pp.~253--309. \MR{3076055}

\bibitem[WLZ18]{WLZ18}
{\sc Y.~Wang, T.~Li, and G.~Zhao}, \emph{New homogeneous {E}instein metrics on
  {${\rm SO}(7)/T$}}, Chinese Ann. Math. Ser. B \textbf{39} (2018), no.~1,
  97--110. \MR{3742936}

\bibitem[WZ85]{wang-ziller-ens}
{\sc M.~Y. Wang and W.~Ziller}, \emph{On normal homogeneous {E}instein
  manifolds}, Ann. Sci. \'{E}cole Norm. Sup. (4) \textbf{18} (1985), no.~4,
  563--633. \MR{839687}

\bibitem[WZ86]{wang-ziller-inv}
{\sc \bysame}, \emph{Existence and nonexistence of homogeneous {E}instein
  metrics}, Invent. Math. \textbf{84} (1986), no.~1, 177--194. \MR{830044}

\bibitem[Zil82]{ziller-mathann}
{\sc W.~Ziller}, \emph{Homogeneous {E}instein metrics on spheres and projective
  spaces}, Math. Ann. \textbf{259} (1982), no.~3, 351--358. \MR{661203}

\end{thebibliography}
\end{document}